\begin{document}

\newcommand{\se}{\setcounter{equation}{0}}
\def\theequation{\thesection.\arabic{equation}}

\newtheorem{theorem}{Theorem}[section]
\newtheorem{cdf}{Corollary}[section]
\newtheorem{lemma}{Lemma}[section]
\newtheorem{remark}{Remark}[section]

\newcommand{\e}{{\bf e}}
\newcommand{\f}{{\bf f}}

\newcommand{\bu}{{\bf u}}
\newcommand{\bv}{{\bf v}}
\newcommand{\bw}{{\bf w}}
\newcommand{\by}{{\bf y}}
\newcommand{\bz}{{\bf z}}
\newcommand{\bH}{{\bf H}}
\newcommand{\bJ}{{\bf J}}
\newcommand{\bL}{{\bf L}}

\newcommand{\bchi}{\mbox{\boldmath $\chi$}}
\newcommand{\bta}{\mbox{\boldmath $\eta$}}
\newcommand{\bphi}{\mbox{\boldmath $\phi$}}
\newcommand{\bth}{\mbox{\boldmath $\theta$}}
\newcommand{\bxi}{\mbox{\boldmath $\xi$}}
\newcommand{\bzt}{\mbox{\boldmath $\zeta$}}

\newcommand{\hf}{\hat{\f}}
\newcommand{\hta}{\hat{\bta}}
\newcommand{\hth}{\hat{\bth}}
\newcommand{\hbu}{\hat{\bu}}
\newcommand{\hxi}{\hat{\bxi}}
\newcommand{\hby}{\hat{\by}}
\newcommand{\hzt}{\hat{\bzt}}

\newcommand{\td}{\tilde{\Delta}}
\newcommand{\bub}{\bu_{\beta}}
\newcommand{\bubt}{\bu_{\beta,t}}

\newcommand{\bbu}{\bar{\bu}}
\newcommand{\bby}{\bar{\by}}
\newcommand{\bbz}{\bar{\bz}}

\title
{ \large\bf Nonlinear Galerkin Finite Element for Viscoelastic Fluid Flow: \\
Optimal Error Estimate}

\author{Deepjyoti Goswami}
\date{}
\maketitle

\begin{abstract}
In this article, we discuss a couple of nonlinear Galerkin method (NLG) in finite
element set up for viscoelastic fluid flow, mainly equations of motion arising in
the flow of $2D$ Oldroyd model. We obtain improved error estimate in $L^{\infty}(\bL^2)$ norm, which is optimal in nature, for linear finite element approximation, in view of the error estimate available in literature, in $L^2(\bH^1)$ norm.
\end{abstract}

\vspace{0.30cm} 
\noindent
{\bf Key Words}. Viscoelastic fluids, Oldroyd fluid, nonlinear Galerkin method,
optimal error estimates.

\vspace{.2in}
\section{Introduction}
Nonlinear Galerkin (NLG) method has been introduced by Marion and Temam in \cite{MT89} as a means of turbulence modeling, in order to handle long-time integration of dissipative evolution partial differential equations. In a classical Galerkin method (CGM), the equation is projected to some finite dimensional space, thereby neglecting the orthogonal space. 
But it is well-known that nonlinear dynamics is sensitive to initial data; a small change
in the initial data, after a large time, may result in a significant change in the
system. Hence, for large time integration, it is only appropriate to capture the effect of
some the neglected terms. The idea behind NLG is to take into account the effect of the neglected terms in the long run.

The method in \cite{MT89} is based on the eigenvectors of the underlying linear elliptic operator. Later in \cite{MT90}, it is expanded to more general bases and more specifically to finite elements. Unlike Galerkin finite element method, where we work with a finite element space $\bJ_h$, with grid size $h$, in nonlinear Galerkin finite element method, we work
with two finite element spaces, namely $\bJ_H$ on a coarser grid and $\bJ_h^H$, a suitable
complement of $\bJ_H$ in $\bJ_h$, respectively. It corresponds to spatial splitting of the unknown $\bu\in \bJ_h$ as
$$ \bu=\by+\bz\in \bJ_H+\bJ_h^H. $$
In other words, we work with two set of equations in nonlinear Galerkin, one in $\by$
and the other in $\bz$. But the equation in $\bz$, which is solved in a finer grid, is generally simplified (linearized and time derivative is discarded). And the full equation in $\by$ is solved only in a coarser grid. In a way, nonlinear Galerkin method, in the context of finite element, results in a two-level scheme.

In this article, we consider semi-discrete nonlinear Galerkin approximations to the 
following system of equations of motion arising in the Oldroyd model (see (\cite{GP11})) of order one:
\begin{eqnarray}\label{om}
 ~~\frac {\partial \bu}{\partial t}+\bu\cdot\nabla\bu-\mu\Delta\bu-\int_0^t \beta
 (t-\tau)\Delta\bu (x,\tau)\,d\tau+\nabla p=\f(x,t),~~x\in \Omega,~t>0
\end{eqnarray}
with incompressibility condition
\begin{eqnarray}\label{ic}
 \nabla \cdot \bu=0,~~~x\in \Omega,~t>0,
\end{eqnarray}
and initial and boundary conditions
\begin{eqnarray}\label{ibc}
 \bu(x,0)=\bu_0~~\mbox {in}~\Omega,~~~\bu=0,~~\mbox {on}~\partial\Omega,~t\ge 0.
\end{eqnarray}
Here, $\Omega$ is a bounded domain in $\mathbb{R}^2$ with boundary $\partial \Omega$,
$\mu = 2 \kappa\lambda^{-1}>0$ and the kernel $\beta (t) = \gamma \exp (-\delta t),$
where $\gamma= 2\lambda^{-1}(\nu-\kappa \lambda^{- 1})>0$ and $\delta =\lambda^{-1}>0$.
For further details, we would like to refer to \cite{GP11} and references therein.

We are interested in optimal error estimate of the nonlinear Galerkin approximations of the
velocity of (\ref{om})-(\ref{ibc}). Note that the results obtained here for the above problem are also valid for Navier-Stokes' equations. Error estimations of nonlinear Galerkin
methods in mixed finite element set up for Navier-Stokes' equations are carried out in \cite{AM94}. Both nonlinearity and time dependence are treated on coarse space. It is
proved that
\begin{align*}
\|(\bu^h-\bu_h)(t)\|_{\bH^1(\Omega)} \le c(t)H^2, \\
\|(p^h-p_h)(t)\|_{\bL^2(\Omega)} \le c(t)H^2,
\end{align*}
where $(\bu_h,p_h)$ is the classical Galerkin approximation and $(\bu^h,p^h)$ is the nonlinear Galerkin approximation. These results are improved to $O(H^3)$ in \cite{MX95},
and  also similar result is obtained for $L^{\infty}(\bL^2)$-norm error estimate for velocity approximation, but only for semi-linear parabolic problems. The author feels that it is not straight forward to carry forward these results to Navier-Stokes equations or to the equations being considered here. The proof of \cite{MX95} depends on one important estimate involving the nonlinear term $f(\bu)$, namely
$$|(f(\by_h+\bz_h)-f(\by_h),\bchi)| \le \|\bz_h\|_{\bL^2(\Omega)}\|\bchi\|_{\bL^2(\Omega)}.$$
(For the notation, kindly refer to \cite{MX95})
But it may not be possible to establish similar estimate for the nonlinear term $f(\bu)=(\bu\cdot\nabla)\bu$. On the other hand, it is observed that the improvement in the order of convergence is due to the fact that the splitting in space is done on the basis of $\bL^2$ projections, unlike previous approaches, where the splitting is based on hierarchical basis. Similar approach is adopted in this article.

No further improvement is observed by the author for (piecewise) linear finite element discretization and with forcing term $\f\in\bL^2(\Omega)$. Similar results are observed in \cite{HL96,HL98}, but for $\f\in\bH^1(\Omega)$ and for Navier-Stokes equations. In \cite{NR97}, various nonlinear Galerkin finite elements are studied in depth for one-dimensional problems and similar results are obtained, although termed as optimal in nature. For example, the method is studied for piecewise polynomials of degree $2n-1$. And the error estimates obtained in energy-norm and $L^2$-norm are as follows
\begin{align*}
\|(\bu_h-\bu^h)(t)\|_{\bH^n(\Omega)} &\le c(t)H^{3n-1}, \\
\|(\bu_h-\bu^h)(t)\|_{\bL^2(\Omega)} &\le c(t)\min\{t^{-1/2}H^{4n-1},H^{7n/2-1}\},
\end{align*}
respectively. For $n=1$, we have the piecewise linear finite element approximation and
error estimates are of $O(H^2)$ and $O(H^{5/2})$, in energy-norm and $L^2$-norm,
respectively. Later on He {\it et. al} have studied NLG and modified NLG in both spectral and finite element set ups, \cite{CEHL99,HL99,HLL99,HWCL03,HMMC04} to name a few. In \cite{CEHL99}, (modified) spectral nonlinear Galerkin method is applied for the problem (\ref{om})-(\ref{ibc}) with periodic boundary condition. And in \cite{HLL99,HWCL03,HMMC04},
convergence and stability is analyzed for the fully discrete NLG.

Recently in \cite{LH10}, a new projection is employed for a two-level finite element for Navier-Stokes equations. Error estimates of $O((\log h)^{1/2}H^3)$ and $O((\log h)^{1/2} H^4)$, in energy-norm and $L^2$-norm, respectively, are obtained. The finite dimensional case of the Brezis-Gallouet inequality plays a crucial role apart from the projection in the error analysis and also the reason for the appearance of the logarithmic term. Note that here the forcing term is taken in $L^{\infty}(0,T;(\bL^2(\Omega))^2)\cap L^2(0,T;\bH_0^1(\Omega))$.
The projection in \cite{LH10} is based on fully discretization; finite element for space discretization and backward Euler for time discretization. However, Crank-Nicolson scheme is used for the problem.

It is the observation of the author that although the energy-norm error estimate for NLG is of order $O(H^3)$, there is no $L^2$-norm error estimate of order $O(H^4)$. As is mentioned in \cite{MX95}, it is an open question and the author tries to resolve this for a couple of (modified) NLG methods in this article. Although, this improved error estimate is proved for
the problem (\ref{om})-(\ref{ibc}), it will go through for Navier-Stokes equations.

The author would like to remark here that two NLG methods are considered here only for theoretical purpose. In the first NLG, nonlinearity is preserved in both the equations and as a result, is not physically viable due to very high cost of implementation. But by comparing the error analyses for both the methods, we can identify the key factors for the gain in order of convergence. In both the cases nonlinearity (in error) is separated and treated only after the linearized error is estimated. It is observed that the nonlinear terms play an important role, as they can limit the order of convergence, see Remark \ref{rmk}.

It is to be noted that the approach here is an heuristic one and the author does not claim that NLG performs better than classical Galerkin. The arguments in this regard are settled
to certain extent in the articles \cite{HR93,NR97,GP08}, to name a few. 


The article is organized as follows. In section $2$, we briefly review the Oldroyd model.
Section $3$ deals with the classical Galerkin approximation. In Section $4$, we present the nonlinear Galerkin modelsand finally in Section $5$, we discuss the error analysis.

\section{Preliminaries}
\se

For our subsequent use, we denote by bold face letters the $\mathbb{R}^2$-valued
function space such as
\begin{align*}
 \bH_0^1 = [H_0^1(\Omega)]^2, ~~~ \bL^2 = [L^2(\Omega)]^2.
\end{align*}
Note that $\bH^1_0$ is equipped with a norm
$$ \|\nabla\bv\|= \big(\sum_{i,j=1}^{2}(\partial_j v_i, \partial_j
 v_i)\big)^{1/2}=\big(\sum_{i=1}^{2}(\nabla v_i, \nabla v_i)\big)^{1/2}. $$
Further, we introduce divergence free function spaces:
\begin{align*}
\bJ_1 &= \{\bphi\in\bH_0^1 : \nabla \cdot \bphi = 0\} \\
\bJ= \{\bphi \in\bL^2 :\nabla \cdot \bphi &= 0~~\mbox{in}~~
 \Omega,\bphi\cdot{\bf n}|_{\partial\Omega}=0~~\mbox {holds weakly}\}, 
\end{align*}
where ${\bf n}$ is the outward normal to the boundary $\partial \Omega$ and $\bphi
\cdot {\bf n} |_{\partial\Omega} = 0$ should be understood in the sense of trace
in $\bH^{-1/2}(\partial\Omega)$, see \cite{temam}.
For any Banach space $X$, let $L^p(0, T; X)$ denote the space of measurable $X$
-valued functions $\bphi$ on  $ (0,T) $ such that
$$ \int_0^T \|\bphi (t)\|^p_X~dt <\infty~~~\mbox {if}~~1 \le p < \infty, $$
and for $p=\infty$
$$ {\displaystyle{ess \sup_{0<t<T}}} \|\bphi (t)\|_X <\infty~~~\mbox {if}~~p=\infty. $$
Through out this paper, we make the following assumptions:\\
(${\bf A1}$). For ${\bf g} \in \bL^2$, let the unique pair of solutions $\{\bv
\in\bJ_1, q \in L^2 /R\} $ for the steady state Stokes problem
\begin{align*}
 -\Delta\bv + \nabla q = {\bf g}, ~~
 \nabla \cdot\bv = 0~~\mbox {in}~\Omega,~~~\bv|_{\partial\Omega}=0,
\end{align*}
satisfy the following regularity result
$$  \| \bv \|_2 + \|q\|_{H^1/R} \le C\|{\bf g}\|. $$
\noindent
(${\bf A2}$). The initial velocity $\bu_0$ and the external force $\f$ satisfy for
positive constant $M_0,$ and for $T$ with $0<T \leq \infty$
$$ \bu_0\in\bJ_1,~\f,\f_t \in L^{\infty} (0, T ;\bL^2)~~~\mbox{with} ~~~
\|\bu_0\|_1 \le M_0,~~{\displaystyle{\sup_{0<t<T} }}\big\{\|\f\|,
 \|\f_t\|\big\} \le M_0. $$

\noindent Before going into details, let us introduce weak formulation of
(\ref{om})-(\ref{ibc}).
Find  $\bu(t) \in {\bf J}_1,~t>0 $ such that
\begin{equation}\label{wfj}
 (\bu_t, \bphi) +\mu (\nabla \bu, \nabla \bphi )+( \bu \cdot \nabla \bu, \bphi)
 +\int_0^t \beta(t-s) (\nabla\bu(s),\nabla \bphi)ds=(\f,\bphi),~\forall\bphi
 \in {\bf J}_1.
\end{equation}
We would like to note the positivity property of the integral. For a proof, we would
like to refer to \cite{GP11,G11}.
\begin{lemma}\label{positivity}
For arbitrary $\alpha>0$, $t^*>0$ and $\phi\in L^2(0,t^*)$, the following positive
definite property holds
$$ \int_0^{t^*}{\left(\int_0^t{\exp{[-\alpha(t- s)]}\phi(s)}\,ds
   \right)\phi(t)}\,dt\ge 0. $$
\end{lemma}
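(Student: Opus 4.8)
The plan is to reduce the iterated integral to a quadratic form in an auxiliary function that solves a first-order linear ODE. First I would introduce
$$ \psi(t) = \int_0^t \exp[-\alpha(t-s)]\,\phi(s)\,ds = e^{-\alpha t}\int_0^t e^{\alpha s}\phi(s)\,ds. $$
Since $\phi\in L^2(0,t^*)$, the map $t\mapsto \int_0^t e^{\alpha s}\phi(s)\,ds$ is absolutely continuous, and hence so is $\psi$, being the product of this function with the smooth factor $e^{-\alpha t}$. Differentiating the product almost everywhere gives
$$ \psi'(t) = -\alpha\,\psi(t) + \phi(t), \qquad \psi(0)=0, $$
so that $\phi(t) = \psi'(t) + \alpha\,\psi(t)$ for almost every $t$.

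Substituting this expression for $\phi$ into the integrand, the quantity to be estimated becomes
$$ \int_0^{t^*}\psi(t)\,\phi(t)\,dt = \int_0^{t^*}\psi(t)\,\psi'(t)\,dt + \alpha\int_0^{t^*}\psi(t)^2\,dt. $$
The first term on the right equals $\tfrac12\int_0^{t^*}\tfrac{d}{dt}\psi(t)^2\,dt = \tfrac12\big(\psi(t^*)^2 - \psi(0)^2\big) = \tfrac12\,\psi(t^*)^2$, by the fundamental theorem of calculus and $\psi(0)=0$. Therefore
$$ \int_0^{t^*}\left(\int_0^t \exp[-\alpha(t-s)]\,\phi(s)\,ds\right)\phi(t)\,dt = \tfrac12\,\psi(t^*)^2 + \alpha\int_0^{t^*}\psi(t)^2\,dt \ge 0, $$
since $\alpha>0$ and both summands are manifestly nonnegative. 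This yields the claimed positive-definiteness, and in fact exhibits the explicit nonnegative representation of the left-hand side.

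The only delicate point is justifying the differentiation of $\psi$ and the fundamental theorem of calculus for $\psi^2$ at the regularity level $\phi\in L^2$; I expect this to be the main (though routine) obstacle. It can be handled either by observing that absolute continuity of $\psi$ is already enough to legitimize both steps, or, more conservatively, by first proving the identity for smooth $\phi\in C^\infty_c(0,t^*)$ and then passing to the limit using the density of $C^\infty_c(0,t^*)$ in $L^2(0,t^*)$ together with the continuity of both sides of the identity as functionals of $\phi$ in the $L^2$ norm.
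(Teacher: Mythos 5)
Your proof is correct, and the regularity caveat you flag is handled exactly right: since $\phi\in L^2(0,t^*)\subset L^1(0,t^*)$, the function $\psi$ is absolutely continuous with $\psi'=-\alpha\psi+\phi\in L^2$, so the a.e.\ product rule and the fundamental theorem of calculus for $\psi^2$ are both legitimate without any density detour (though the mollification fallback you sketch would also work). Note that the paper itself does not prove this lemma; it defers to the references \cite{GP11,G11}. The route usually taken there, and classically for memory terms of this type, is the positive-definite-kernel (Plancherel) argument: extend $\phi$ by zero, write the inner integral as a convolution with $k(t)=e^{-\alpha t}$, and observe that $\mathrm{Re}\,\hat{k}(i\omega)=\alpha/(\alpha^2+\omega^2)>0$, whence the bilinear form is nonnegative by Parseval. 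That argument has the virtue of covering any kernel of positive type (e.g.\ completely monotone kernels), not just exponentials. Your ODE reduction is more elementary and exploits the exponential structure specifically, and it actually buys something extra: the explicit identity
$$ \int_0^{t^*}\Big(\int_0^t e^{-\alpha(t-s)}\phi(s)\,ds\Big)\phi(t)\,dt
 = \tfrac12\,\psi(t^*)^2+\alpha\int_0^{t^*}\psi(t)^2\,dt $$
gives a quantitative lower bound $\alpha\|\psi\|_{L^2(0,t^*)}^2$ rather than bare nonnegativity, which is a strictly stronger conclusion than the lemma as stated.
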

\noindent
We will use this non-negativity property frequently in this article without exclusive
mention of the Lemma. Further, for existence and uniqueness and the regularity of the solution of the problem (\ref{wfj}), we refer to \cite{GP11,G11} and references cited therein.

\section{Classical Galerkin Method}
\se

From now on, we denote $h$ with $0<h<1$ to be a real positive discretization
parameter tending to zero. Let  $\bH_h$ and $L_h$, $0<h<1$ be two family of
finite dimensional subspaces of $\bH_0^1 $ and $L^2$, respectively,
approximating velocity vector and the pressure. Assume that the following
approximation properties are satisfied for the spaces $\bH_h$ and $L_h$: \\
${\bf (B1)}$ For each $\bw \in\bH_0^1 \cap \bH^2 $ and $ q \in
H^1/R$ there exist approximations $i_h w \in \bH_h $ and $ j_h q \in
L_h $ such that
$$ \|\bw-i_h\bw\|+ h \| \nabla (\bw-i_h \bw)\| \le K_0 h^2 \| \bw\|_2,
 ~~~~\| q - j_h q \| \le K_0 h \| q\|_1. $$
Further, suppose that the following inverse hypothesis holds for $\bw_h\in\bH_h$:
\begin{align}\label{inv.hypo}
 \|\nabla \bw_h\| \leq  K_0 h^{-1} \|\bw_h\|.
\end{align}
For defining the Galerkin approximations, set for $\bv, \bw, \bphi \in \bH_0^1$,
$$ a(\bv, \bphi) = (\nabla \bv, \nabla \bphi) $$
and
$$ b(\bv, \bw,\bphi)= \frac{1}{2} (\bv \cdot \nabla \bw , \bphi)
   - \frac{1}{2} (\bv \cdot \nabla \bphi, \bw). $$
Note that the operator $b(\cdot, \cdot, \cdot)$ preserves the antisymmetric property of
the original nonlinear term, that is,
$$ b(\bv_h, \bw_h, \bw_h) = 0 \;\;\; \forall \bv_h, \bw_h \in {\bH}_h. $$

\noindent In order to consider a discrete space, analogous to $\bJ_1$, we
impose the discrete incompressibility condition on $\bH_h$ and call it as
$\bJ_h$. Thus, we define $\bJ_h,$ as
$$ {\bf J}_h = \{ v_h \in {\bf H}_h : (\chi_h,\nabla\cdot v_h)=0
 ~~~\forall \chi_h \in L_h \}. $$
Note that $\bJ_h$ is not a subspace of $\bJ_1$. With $\bJ_h$ as above, we now introduce
the following weak formulation as: Find $\bu_h(t)\in {\bf J}_h $ such that $\bu_h(0) =
\bu_{0h} $ and for $\bphi_h \in \bJ_h,t>0$
\begin{equation}\label{dwfj}
~~~~ (\bu_{ht},\bphi_h) +\mu a (\bu_h,\bphi_h)+ \int_0^t \beta(t-s) a(\bu_h(s), \bphi_h)~ds
= -b( \bu_h, \bu_h, \bphi_h)+(\f,\bphi_h).
\end{equation}
Since $\bJ_h$ is finite dimensional, the problem (\ref{dwfj}) leads to a system of
nonlinear integro-differential equations. For global existence of a unique solution
of (\ref{dwfj}), we refer to \cite{GP11,G11}.

\noindent Moreover, we also assume that the following approximation property holds
true for ${\bf J}_h $. \\
\noindent
${\bf (B2)}$ For every $\bw \in {\bf J}_1 \cap {\bf H}^2, $ there exists an
approximation $r_h \bw \in {\bf J_h}$ such that
$$ \|\bw-r_h\bw\|+h \| \nabla (\bw - r_h \bw) \| \le K_5 h^2 \|\bw\|_2 . $$
The $L^2$ projection $P_h:\bL^2\mapsto \bJ_h$ satisfies the following properties
(see \cite{GP11}): for $\bphi\in \bJ_h$,
\begin{equation}\label{ph1}
 \|\bphi- P_h \bphi\|+ h \|\nabla P_h \bphi\| \leq C h\|\nabla \bphi\|,
\end{equation}
and for $\bphi \in \bJ_1 \cap \bH^2,$
\begin{equation}\label{ph2}
 \|\bphi-P_h\bphi\|+h\|\nabla(\bphi-P_h \bphi)\|\le C h^2\|\td\bphi\|.
\end{equation}
We now define the discrete  operator $\Delta_h: \bH_h \mapsto \bH_h$ through the
bilinear form $a (\cdot, \cdot)$ as
\begin{eqnarray}\label{do}
 a(\bv_h, \bphi_h) = (-\Delta_h\bv_h, \bphi)~~~~\forall \bv_h, \bphi_h\in\bH_h.
\end{eqnarray}
Set the discrete analogue of the Stokes operator $\td =P(-\Delta) $ as
$\td_h = P_h(-\Delta_h) $. Using Sobolev imbedding and Sobolev inequality, it is
easy to prove the following Lemma
\begin{lemma}\label{nonlin}
Suppose conditions (${\bf A1}$), (${\bf B1}$) and (${\bf  B2}$) are satisfied. Then there 
exists a positive constant $K$ such that for $\bv,\bw,\bphi\in\bH_h$, the following holds:
\begin{equation}\label{nonlin1}
 |(\bv\cdot\nabla\bw,\bphi)| \le K \left\{
\begin{array}{l}
 \|\bv\|^{1/2}\|\nabla\bv\|^{1/2}\|\nabla\bw\|^{1/2}\|\Delta_h\bw\|^{1/2}
 \|\bphi\|, \\
 \|\bv\|^{1/2}\|\Delta_h\bv\|^{1/2}\|\nabla\bw\|\|\bphi\|, \\
 \|\bv\|^{1/2}\|\nabla\bv\|^{1/2}\|\nabla\bw\|\|\bphi\|^{1/2}
 \|\nabla\bphi\|^{1/2}, \\
 \|\bv\|\|\nabla\bw\|\|\bphi\|^{1/2}\|\Delta_h\bphi\|^{1/2}, \\
 \|\bv\|\|\nabla\bw\|^{1/2}\|\Delta_h\bw\|^{1/2}\|\bphi\|^{1/2}
 \|\nabla\bphi\|^{1/2}
\end{array}\right.
\end{equation}
\end{lemma}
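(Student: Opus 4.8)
The plan is to derive all five bounds from a single template: split the three factors $\bv$, $\nabla\bw$ and $\bphi$ among the spaces $\bL^2$, $\bL^4$ and $\bL^{\infty}$ by H\"older's inequality, and then convert each $\bL^4$ factor by the two-dimensional Ladyzhenskaya (Gagliardo--Nirenberg) inequality $\|\psi\|_{\bL^4}\le C\|\psi\|^{1/2}\|\nabla\psi\|^{1/2}$ and each $\bL^{\infty}$ factor by the Agmon inequality $\|\psi\|_{\bL^{\infty}}\le C\|\psi\|^{1/2}\|\psi\|_2^{1/2}$. Since $\bv,\bw,\bphi\in\bH_h\subset\bH_0^1$, any factor needing only first derivatives is controlled at once by the continuous inequality; the second-order quantities $\|\Delta_h\cdot\|$ enter precisely in those factors where one derivative more than $\bH_h$-membership is required.

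Concretely, I would match each bound to its H\"older triple applied to $(\bv,\nabla\bw,\bphi)$: the first bound uses $\bL^4\times\bL^4\times\bL^2$, the second $\bL^{\infty}\times\bL^2\times\bL^2$, the third $\bL^4\times\bL^2\times\bL^4$, the fourth $\bL^2\times\bL^2\times\bL^{\infty}$, and the fifth $\bL^2\times\bL^4\times\bL^4$; in each case the reciprocal exponents sum to one. Applying Ladyzhenskaya to the $\bL^4$ factors and Agmon to the $\bL^{\infty}$ factors then reproduces exactly the half-power products in (\ref{nonlin1}). For instance, the first bound follows from $|(\bv\cdot\nabla\bw,\bphi)|\le\|\bv\|_{\bL^4}\|\nabla\bw\|_{\bL^4}\|\bphi\|$ together with $\|\bv\|_{\bL^4}\le C\|\bv\|^{1/2}\|\nabla\bv\|^{1/2}$ and $\|\nabla\bw\|_{\bL^4}\le C\|\nabla\bw\|^{1/2}\|\Delta_h\bw\|^{1/2}$, while the second follows from $|(\bv\cdot\nabla\bw,\bphi)|\le\|\bv\|_{\bL^{\infty}}\|\nabla\bw\|\|\bphi\|$ and $\|\bv\|_{\bL^{\infty}}\le C\|\bv\|^{1/2}\|\Delta_h\bv\|^{1/2}$; the remaining three are identical in spirit, with Agmon supplying the $\bL^{\infty}$ bound for $\bphi$ in the fourth.

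The genuine content, and the main obstacle, is the two discrete second-order inequalities just invoked: the discrete Ladyzhenskaya inequality for gradients, $\|\nabla\bw\|_{\bL^4}\le C\|\nabla\bw\|^{1/2}\|\Delta_h\bw\|^{1/2}$, and the discrete Agmon inequality $\|\bv\|_{\bL^{\infty}}\le C\|\bv\|^{1/2}\|\Delta_h\bv\|^{1/2}$. On the continuous level these would follow by applying Ladyzhenskaya to $\nabla\bw$ and Agmon to $\bv$ and then bounding the full $\bH^2$ norm by the $\bL^2$ norm of the Stokes operator via the regularity hypothesis (${\bf A1}$). A finite element function, however, need not lie in $\bH^2$, so $\|\cdot\|_2$ must be replaced by $\|\Delta_h\cdot\|$. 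To make this transfer I would introduce, for $\bw\in\bH_h$, the solution $\bw^{*}\in\bJ_1\cap\bH^2$ of the Stokes problem with right-hand side $-\Delta_h\bw\in\bL^2$, apply (${\bf A1}$) to get $\|\bw^{*}\|_2\le C\|\Delta_h\bw\|$, and then use the approximation properties (${\bf B1}$), (${\bf B2}$) together with the inverse hypothesis (\ref{inv.hypo}) to compare $\bw$ with $\bw^{*}$ in the relevant norms at constants independent of $h$. Once these two discrete inequalities are established, the five estimates in (\ref{nonlin1}) drop out mechanically from the H\"older--interpolation pattern above, which is why the author can treat the proof as routine.
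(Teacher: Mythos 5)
Your proposal is correct and is essentially the argument the paper intends: the paper gives no written proof, asserting only that the lemma follows ``using Sobolev imbedding and Sobolev inequality'' (with details as in \cite{GP11}), and your H\"older splitting combined with Ladyzhenskaya for the $\bL^4$ factors and Agmon for the $\bL^\infty$ factors, each exponent pattern matching one of the five bounds, is exactly that standard route. Your treatment of the genuinely discrete ingredients --- the inequalities $\|\nabla\bw\|_{\bL^4}\le C\|\nabla\bw\|^{1/2}\|\Delta_h\bw\|^{1/2}$ and $\|\bv\|_{\bL^\infty}\le C\|\bv\|^{1/2}\|\Delta_h\bv\|^{1/2}$, obtained by comparing $\bv,\bw\in\bH_h$ with the auxiliary elliptic solution with data $-\Delta_h\bv$ (resp.\ $-\Delta_h\bw$) via the regularity assumption (${\bf A1}$), the approximation properties (${\bf B1}$)--(${\bf B2}$), and inverse estimates --- is likewise the standard comparison argument in the cited literature, so no gap remains beyond routine details.
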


\noindent For examples of subspaces $\bH_h$ and $L_h$ satisfying assumptions (${\bf B1}$),
(${\bf B2}'$), and (${\bf B2}$), we again would like to refer to \cite{GP11} and references cited therein.

\noindent Note that the semi-discrete solutions admit {\it a priori} and regularity
estimates analogous to that of the continuous solution. Below, we present a Lemma
containing certain estimates of the same and once again, we would like to refer to
\cite{GP11} for a proof.
\begin{lemma}\label{est.uh}
Let $0<\alpha <\min\{\mu\lambda_1,\delta\}$, where $\lambda_1>0$ is the smallest eigenvalue
of the Stokes' operator. Let the assumptions (${\bf A1}$),(${\bf A2}$),(${\bf B1}$) and (${\bf  B2}$) hold. Then the semi-discrete Galerkin approximation $\bu_h$ of the velocity $\bu$ satisfies, for $t>0,$
\begin{eqnarray}
 \|\bu_h(t)\|+e^{-2\alpha t}\int_0^t e^{2\alpha s}\|\nabla\bu_h(t)\|^2~ds \le K,
 \label{est.uh1} \\
 \|\nabla\bu_h(t)\|+e^{-2\alpha t}\int_0^t e^{2\alpha s}\|\td_h\bu_h(s)\|^2~ds \le K,
 \label{est.uh2} \\
 (\tau^*(t))^{1/2}\|\td_h\bu_h(t)\| \le K, \label{est.uh3}
\end{eqnarray}
where $\tau^*(t)=\min\{t,1\}$. The positive constant $K$ depends only on the given data.
In particular, $K$ is independent of $h$ and $t$.
\end{lemma}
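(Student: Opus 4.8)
The plan is to establish the three bounds successively by the energy method applied to the discrete weak formulation (\ref{dwfj}), exploiting the antisymmetry $b(\bv_h,\bw_h,\bw_h)=0$, the positivity of the memory kernel from Lemma \ref{positivity}, and the nonlinear bounds of Lemma \ref{nonlin}, all carried out against the exponential weight $e^{2\alpha s}$ so that the resulting constants stay independent of $t$.

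First I would obtain (\ref{est.uh1}) by choosing $\bphi_h=\bu_h$ in (\ref{dwfj}). The trilinear term vanishes by antisymmetry, while $(\bu_{ht},\bu_h)=\tfrac12\tfrac{d}{dt}\|\bu_h\|^2$ and $a(\bu_h,\bu_h)=\|\nabla\bu_h\|^2$. Multiplying by $e^{2\alpha t}$ and integrating in time, the memory term $\int_0^t\beta(t-s)(\nabla\bu_h(s),\nabla\bu_h(t))\,ds$ is handled by writing $e^{2\alpha t}\beta(t-s)=\gamma e^{-(\delta-\alpha)(t-s)}$ relative to the weighted function $e^{\alpha s}\nabla\bu_h(s)$, so that Lemma \ref{positivity} (applicable since $\alpha<\delta$) forces its contribution to be nonnegative and it may be dropped. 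The term $\alpha e^{2\alpha t}\|\bu_h\|^2$ arising from differentiating the weight is absorbed into $\mu e^{2\alpha t}\|\nabla\bu_h\|^2$ through $\lambda_1\|\bu_h\|^2\le\|\nabla\bu_h\|^2$, using $\alpha<\mu\lambda_1$; the forcing is estimated by Cauchy--Schwarz and Young together with $\|\f\|\le M_0$ from (${\bf A2}$). Dividing by $e^{2\alpha t}$ yields (\ref{est.uh1}).

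For (\ref{est.uh2}) I would test with $\bphi_h=\td_h\bu_h$. Then $(\bu_{ht},\td_h\bu_h)=\tfrac12\tfrac{d}{dt}\|\nabla\bu_h\|^2$, $a(\bu_h,\td_h\bu_h)=\|\td_h\bu_h\|^2$, and the memory term reduces to $\int_0^t\beta(t-s)(\td_h\bu_h(s),\td_h\bu_h(t))\,ds$, again nonnegative after weighting by Lemma \ref{positivity}. The trilinear term is controlled by Lemma \ref{nonlin}, giving a bound of the form $K\|\bu_h\|^{1/2}\|\nabla\bu_h\|\|\td_h\bu_h\|^{3/2}$; Young's inequality splits it into a part absorbed by $\mu\|\td_h\bu_h\|^2$ and a remainder $C\|\bu_h\|^2\|\nabla\bu_h\|^4$, which by (\ref{est.uh1}) is $\le CK\|\nabla\bu_h\|^2\cdot\|\nabla\bu_h\|^2$. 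This is the crux of the argument: a direct Gronwall estimate produces an integrating factor $\exp(C\int_0^t\|\nabla\bu_h\|^2\,ds)$ that grows with $t$, so to retain a $t$-independent constant I would instead invoke the uniform Gronwall lemma, using that (\ref{est.uh1}) bounds $\int_\tau^{\tau+1}\|\nabla\bu_h\|^2\,ds$ uniformly in $\tau$; on $[0,1]$ the ordinary Gronwall inequality suffices since $\|\nabla\bu_{0h}\|\le K$.

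Finally, (\ref{est.uh3}) reflects the loss of one power of $t$ near the origin caused by assuming only $\bu_0\in\bJ_1$. The plan is to differentiate (\ref{dwfj}) in time and test with $\tau^*(t)\bu_{ht}$; differentiating the convolution produces a boundary contribution $\beta(0)\td_h\bu_h$ together with $\int_0^t\beta'(t-s)(\cdots)\,ds$, both of which must be absorbed using the already established (\ref{est.uh1})--(\ref{est.uh2}) and the positivity structure. This yields weighted control of $(\tau^*)^{1/2}\|\bu_{ht}\|$ and of $\int_0^t\tau^*\|\nabla\bu_{ht}\|^2\,ds$. Returning to the strong discrete form of (\ref{dwfj}), $\mu\td_h\bu_h=P_h\f-\bu_{ht}-P_h(\bu_h\cdot\nabla\bu_h)-\int_0^t\beta(t-s)\td_h\bu_h(s)\,ds$, I would bound $\|\td_h\bu_h\|$ termwise: the forcing by $M_0$, the nonlinear term by Lemma \ref{nonlin} with a factor $\|\td_h\bu_h\|^{1/2}$ absorbed via Young, and the memory term by Cauchy--Schwarz against the weighted integral in (\ref{est.uh2}). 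Multiplying by $(\tau^*)^{1/2}$ and inserting the bound on $(\tau^*)^{1/2}\|\bu_{ht}\|$ gives (\ref{est.uh3}). I expect the principal difficulty to lie precisely in this last step, in simultaneously managing the weight $\tau^*$, the differentiated memory kernel, and the nonlinearity while keeping all constants independent of both $h$ and $t$.
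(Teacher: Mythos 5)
The paper does not prove Lemma \ref{est.uh} at all: it is quoted verbatim from \cite{GP11} (``we would like to refer to \cite{GP11} for a proof''), so your proposal can only be compared with the standard argument of that reference --- which it does reproduce: weighted energy estimates with the factor $e^{2\alpha t}$, the test functions $\bu_h$ and $\td_h\bu_h$, positivity of the exponentially decaying kernel via Lemma \ref{positivity} (legitimate since $\alpha<\delta$ turns $e^{2\alpha t}\beta(t-s)$ into $\gamma e^{-(\delta-\alpha)(t-s)}$ acting on $e^{\alpha s}\nabla\bu_h(s)$), absorption of the weight term through $\alpha<\mu\lambda_1$, the uniform Gronwall lemma for the $t$-independent gradient bound, and recovery of $(\tau^*)^{1/2}\|\td_h\bu_h\|$ from the strong form of (\ref{dwfj}) after a weighted estimate of $\bu_{ht}$ obtained from the time-differentiated equation. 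This is also exactly the style of the sketch the paper itself gives for the analogous Lemma \ref{apr}.

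One step as written is not sound and needs repair. Lemma \ref{positivity} asserts nonnegativity only of the \emph{double} integral, i.e.\ after integration in $t$; it gives no pointwise sign information on $\int_0^t\beta(t-s)\,(\td_h\bu_h(s),\td_h\bu_h(t))\,ds$ at a fixed time. In your argument for (\ref{est.uh2}) you declare this term ``nonnegative after weighting'' and then invoke the uniform Gronwall lemma --- but uniform Gronwall operates on the pointwise differential inequality $y'\le gy+h$, where the memory term cannot simply be dropped. The standard fix is two-tiered: first integrate the weighted identity in time and use positivity there to discard the double integral, which yields $e^{-2\alpha t}\int_0^t e^{2\alpha s}\|\td_h\bu_h(s)\|^2\,ds\le K$; then, in the pointwise inequality, estimate the memory term by Cauchy--Schwarz as $\frac{\mu}{4}\|\td_h\bu_h(t)\|^2+C\,e^{-2\alpha t}\int_0^t e^{2\alpha s}\|\td_h\bu_h(s)\|^2\,ds$ (using $\alpha<\delta$), so the nonlocal remainder is controlled by the already-derived weighted integral and enters the datum $h(t)$ of the uniform Gronwall lemma. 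You should also note that the integrated inequality contains the nonlinear remainder $\int_0^t e^{2\alpha s}\|\bu_h\|^2\|\nabla\bu_h\|^4\,ds$, which threatens circularity with the sup-bound on $\|\nabla\bu_h\|$; working on unit time windows, where (\ref{est.uh1}) gives $\int_\tau^{\tau+1}\|\nabla\bu_h\|^2\,ds\le Ke^{2\alpha}$, removes this, and your treatment of $[0,1]$ by ordinary Gronwall with $\|\nabla\bu_{0h}\|=\|\nabla P_h\bu_0\|\le C\|\nabla\bu_0\|$ from (\ref{ph1}) is correct. With this bookkeeping your outline of (\ref{est.uh1})--(\ref{est.uh3}) is indeed the argument of \cite{GP11}.
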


\noindent The following semi-discrete error estimates are proved in \cite{GP11}.
\begin{theorem}\label{errest}
Let $\Omega$ be a convex polygon and let the conditions (${\bf A1}$)-(${\bf A2}$)
and (${\bf B1}$)-(${\bf B2}$) be satisfied. Further, let the discrete initial velocity $\bu_{0h}\in \bJ_h$ with $\bu_{0h}=P_h\bu_0,$ where $\bu_0\in \bJ_1.$ Then,
there exists a positive constant $C$, that depends only on the given data and the
domain $\Omega$, such that for $0<T<\infty $ with $t\in (0,T]$
$$ \|(\bu-\bu_h)(t)\|+h\|\nabla(\bu-\bu_h)(t)\|\le Ce^{Ct}h^2t^{-1/2}. $$
\end{theorem}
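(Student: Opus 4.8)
The plan is to combine a projection-based splitting of the error with weighted energy estimates. I would write $\bu-\bu_h=\bta+\bxi$, where $\bta=\bu-\Pi_h\bu$ is the error of a suitable projection $\Pi_h:\bJ_1\to\bJ_h$ and $\bxi=\Pi_h\bu-\bu_h\in\bJ_h$ is the discrete component. Because $\bJ_h\not\subset\bJ_1$ and the memory kernel enters under a time integral, the right choice for $\Pi_h$ is a Stokes--Volterra (Ritz--Volterra) projection: define $\Pi_h\bu(t)\in\bJ_h$ so that the viscous form \emph{together with} the convolution in time annihilates $\bta$ against all $\bphi_h\in\bJ_h$, with the pressure nonconformity absorbed into the definition via the space $L_h$. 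Under (${\bf A1}$), (${\bf B1}$), (${\bf B2}$) and (\ref{ph2}) this yields the optimal bounds $\|\bta(t)\|+h\|\nabla\bta(t)\|\le Ch^2\|\td\bu(t)\|$ and, after differentiating the defining relation in $t$, an analogous bound $\|\bta_t(t)\|\le Ch^2(\|\td\bu_t(t)\|+\|\td\bu(t)\|)$; the regularity estimates for $\bu$ (the continuous analogue of Lemma \ref{est.uh}) then control these norms with at most a $t^{-1/2}$ singularity near $t=0$.

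Next I would derive the equation for $\bxi$ by subtracting (\ref{dwfj}) from the weak form of (\ref{om})--(\ref{ibc}) tested against $\bphi_h\in\bJ_h$. By the defining property of $\Pi_h$ all linear viscous and memory contributions of $\bta$ cancel, leaving
$$(\bxi_t,\bphi_h)+\mu a(\bxi,\bphi_h)+\int_0^t\beta(t-s)a(\bxi(s),\bphi_h)\,ds=-(\bta_t,\bphi_h)-\Lambda(\bphi_h),$$
where $\Lambda(\bphi_h)=b(\bu,\bu,\bphi_h)-b(\bu_h,\bu_h,\bphi_h)$. I would rewrite $\Lambda(\bphi_h)=b(\bu,\bta+\bxi,\bphi_h)+b(\bta+\bxi,\bu_h,\bphi_h)$ and, taking $\bphi_h=\bxi$, use the antisymmetry $b(\bv,\bw,\bw)=0$ to kill $b(\bu,\bxi,\bxi)$, so that the only genuinely dangerous surviving piece is $b(\bxi,\bu_h,\bxi)$.

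Testing with $\bphi_h=\bxi$, coercivity of $a$ controls $\mu\|\nabla\bxi\|^2$ and, after multiplying by $e^{2\alpha t}$ and integrating in time, the convolution term is nonnegative by Lemma \ref{positivity} and may be dropped. The projection term obeys $|(\bta_t,\bxi)|\le\|\bta_t\|\,\|\bxi\|\le Ch^2\|\bxi\|$, which is exactly where the optimal $O(h^2)$ in $L^2$ originates: it is $\bta_t$ (controlled in $\bL^2$ at order $h^2$ by the projection) and not $\nabla\bta$ that drives the estimate, so no duality argument is needed. The nonlinear terms I would bound using the several forms in Lemma \ref{nonlin} together with the uniform a priori bounds $\|\bu_h\|,\|\nabla\bu_h\|\le K$ and $(\tau^*)^{1/2}\|\td_h\bu_h\|\le K$ from Lemma \ref{est.uh} (and their continuous counterparts for $\bu$), giving $|\Lambda(\bxi)|\le\tfrac{\mu}{2}\|\nabla\bxi\|^2+C(\|\bxi\|^2+h^4)$ up to the $\tau^*$ weight. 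Gronwall's inequality then produces $\|\bxi(t)\|\le Ce^{Ct}h^2t^{-1/2}$, the singular factor coming from integrating the weighted regularity quantities near $t=0$, and the triangle inequality with the $\bta$ bound yields the $\bL^2$ part of the theorem.

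For the gradient estimate I would instead test with $\td_h\bxi$ (equivalently run the energy argument for $a(\bxi,\bxi)$), again invoking Lemma \ref{positivity} for the memory term and Lemma \ref{nonlin} for the nonlinearity, now calibrated against $\|\td_h\bu_h\|$ with the $(\tau^*)^{1/2}$ weight, to obtain $\|\nabla\bxi(t)\|\le Ce^{Ct}h\,t^{-1/2}$; combined with $h\|\nabla\bta\|\le Ch^2$ this gives the $\bH^1$ part. I expect the main obstacle to be the nonlinear term: unlike the semilinear case, $b(\bu,\bu,\cdot)$ is genuinely quadratic and first order, so closing Gronwall while preserving the optimal powers of $h$ requires pairing the five estimates of Lemma \ref{nonlin} with the correct combination of $\bL^2$, $\bH^1$ and $\td_h$ a priori bounds, and doing so uniformly near $t=0$, where only the weighted regularity $(\tau^*)^{1/2}\|\td_h\bu_h\|\le K$ is available. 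A secondary difficulty is the memory integral, which couples all earlier times; here the positivity Lemma \ref{positivity}, applied after the $e^{2\alpha t}$ weighting, is what prevents the convolution from spoiling the estimate.
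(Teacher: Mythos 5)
The paper itself contains no proof of Theorem \ref{errest}: it is quoted from \cite{GP11}, so the comparison must be made against the standard strategy of that reference (Stokes--Volterra projection plus weighted energy and duality arguments), which your outline resembles in skeleton. Several of your ingredients are sound: splitting via a Ritz--Volterra-type projection so that the viscous and memory terms annihilate $\bta$, handling the pressure nonconformity from $\bJ_h\not\subset\bJ_1$, antisymmetry of $b$ to isolate $b(\bxi,\bu_h,\bxi)$, Lemma \ref{positivity} after the $e^{2\alpha t}$ weighting, and Lemma \ref{nonlin} paired with Lemma \ref{est.uh}.

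The genuine gap is your treatment of $\bta_t$ and the explicit claim that ``no duality argument is needed.'' Under (${\bf A2}$) the data are nonsmooth ($\bu_0\in\bJ_1$ only), so the regularity of $\bu$ degenerates near $t=0$: the analogue of (\ref{est.uh3}) gives only $(\tau^*(t))^{1/2}\|\td\bu(t)\|\le K$, and each further time derivative costs an extra power, so $\|\td\bu_t(t)\|$ behaves like $t^{-3/2}$, not ``at most $t^{-1/2}$'' as you assert. Consequently your bound $\|\bta_t(t)\|\le Ch^2(\|\td\bu_t(t)\|+\|\td\bu(t)\|)$ yields $\|\bta_t\|\sim h^2t^{-3/2}$, the step $|(\bta_t,\bxi)|\le Ch^2\|\bxi\|$ is unavailable, and $\int_0^t\|\bta_t\|\,ds$ diverges, so the unweighted Gronwall argument cannot close. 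Two further symptoms of the same problem: Gronwall applied to an unweighted inequality cannot manufacture the factor $t^{-1/2}$ appearing in the theorem; and with $\bu_{0h}=P_h\bu_0$ and $\bu_0\in\bJ_1$ only, the initial error $\|\bxi(0)\|$ is merely $O(h\|\bu_0\|_1)$, so any estimate that propagates $\|\bxi(0)\|$ without a time weight caps at first order in $h$. The correct route --- the one in \cite{GP11}, and mirrored by the machinery this paper itself deploys in Section 5 --- is first to obtain an $L^2(\bL^2)$ (or negative-norm) estimate of order $h^2$ via a backward-in-time dual problem adapted to the Volterra kernel, exactly as in (\ref{bp1}) for Lemma \ref{l2l2.bxi}, and then to run the energy argument with the weight $\tau^*(t)=\min\{t,1\}$ (together with $e^{2\alpha t}$), where the lower-order term $(\tau^*)'\|\bxi\|^2$ produced by differentiating the weight is absorbed precisely by the duality-based $L^2(\bL^2)$ bound; the $t^{-1/2}$ in the statement is the trace of this weight, not of ``integrating weighted regularity quantities'' as you suggest. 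Your $\bH^1$ step via testing with $\td_h\bxi$ inherits the same defect through the term $(\bta_t,\td_h\bxi)$, and needs the same weighted/duality repair.
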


\section{Nonlinear Galerkin Method}
\se

In this section, we work with another space discretizing parameter $H$ such that
$0<h<H$ and both $h,H$ tend to $0$. We introduce two more spaces as follows:
\begin{eqnarray}\label{space.split}
\bJ_h=\bJ_H + \bJ_h^H, ~~\mbox{with}~~\bJ_h^H= (I-P_H)\bJ_h
\end{eqnarray}
Note that, by definition, the spaces $\bJ_H$ and $\bJ_h^H$ are orthogonal with
respect to the $\bL^2$-inner product $(\cdot,\cdot)$. In practice, $\bJ_H$ corresponds
to a coarse grid and $\bJ_h^H$ corresponds to a fine grid.

\noindent The following properties are crucial for our error estimates. For a proof,
we refer to \cite{AM94}.
\begin{eqnarray}\label{jhH1}
\|\bchi\| \le cH\|\bchi\|_1,~~\bchi\in\bJ_h^H.
\end{eqnarray}
And there exists $0<\rho<1$ independent of $h$ and $H$ such that
\begin{equation}\label{jhH2}
|a(\bphi,\bchi)| \le (1-\rho)\|\bphi\|_1\|\bchi_h\|_1,~~\bphi\in\bJ_H, \bchi\in\bJ_h^H.
\end{equation}
From (\ref{jhH2}), we can easily deduce that
\begin{equation}\label{jhH3}
\rho\{\|\bphi\|_1^2+\|\bchi\|_1^2\} \le \|\bphi+\bchi_h\|_1^2,~~\bphi\in\bJ_H, \bchi\in\bJ_h^H.
\end{equation}
\noindent
In the first modified nonlinear Galerkin method (NLG I), 
we look for a solution $\bu^h$ in $\bJ_h$ such that
$$ \bu^h=\by^H+\bz^h \in \bJ_H+\bJ_h^H. $$
It is implemented on the interval $(t_0,T],~0<t_0<T<\infty.$ On the interval $(0,t_0]$, classical Galerkin method is implemented, resulting in $(\bu_h,p_h)$.
\begin{remark}
 Since the error analysis of NLG demands higher regularity of the unknown and this means
 higher singularity at $t=0$, the idea is to avoid these kinds of singularity.
\end{remark}
\noindent
For $t>t_0$, we look for $(\by^H,\bz^h)$ satisfying
\begin{align}\label{dwfJH}
(\by_t^H, \bphi) +\mu a (\bu^h,\bphi) &+ b(\bu^h,\bu^h,\bphi)+\int_{t_0}^t
\beta(t-s) a(\bu^h(s),\bphi)~ds=(\f, \bphi), \nonumber \\
\mu a (\bu^h,\bchi) &+ b(\bu^h,\bu^h,\bchi)+ \int_{t_0}^t \beta(t-s) a(\bu^h(s),
\bchi)~ds =(\f, \bchi),
\end{align}
for $\bphi\in\bJ^H,~\bchi\in\bJ_h^H$. We set $\by^H(t_0)=P_H\bu_h(t_0).$

\noindent In the second modified nonlinear Galerkin method (NLG II), we 
look for a solution $\bu^h$ in $\bJ_h$ such that
$$ \bu^h=\by^H+\bz^h \in \bJ_H+\bJ_h^H $$
satisfying
\begin{align}\label{dwfJH1}
(\by_t^H, \bphi) +\mu a (\bu^h,\bphi) &+ b(\bu^h,\bu^h,\bphi)
+ \int_{t_0}^t \beta(t-s) a(\bu^h(s),\bphi)~ds=(\f, \bphi), \nonumber \\
\mu a (\bu^h,\bchi) &+ b(\bu^h,\by^H,\bchi)+b(\by^H,\bz^h,\bchi)+ \int_{t_0}^t \beta(t-s) a(\bu^h(s),\bchi)~ds =(\f, \bchi),
\end{align}
for $\bphi\in\bJ^H,~\bchi\in\bJ_h^H$.
\begin{remark}
Here and henceforth, subscript means the classical Galerkin method and superscript
means the nonlinear Galerkin method.
\end{remark}
\begin{remark}
Both the NLGs can be heuristically derived from (\ref{dwfj}) as follows.
We split the Galerkin approximation $\bu_h$ with the help of the $\bL^2$ projection $P_H$.
\begin{equation}\label{gss} 
\bu_h=P_H\bu_h+(I-P_H)\bu_h=\by_H+\bz_h, 
\end{equation}
And we project the system (\ref{dwfj}) on $\bJ_H,\bJ_h^H$ to obtain the coupled system:
\begin{equation}\label{dwfjH}
\left\{\begin{array}{rc}
(\by_{ht},\bphi) +& \mu a (\bu_h,\bphi)+ \int_{t_0}^t \beta(t-s) a(\bu_h(s), \bphi)~ds
= -b( \bu_h, \bu_h, \bphi)+(\f,\bphi), \\
(\bz_{ht},\bchi)+& \mu a (\bu_h,\bchi)+ \int_{t_0}^t \beta(t-s) a(\bu_h(s), \bchi)~ds
= -b( \bu_h, \bu_h, \bchi)+(\f,\bchi),
\end{array}\right.
\end{equation}
for $\bphi \in \bJ_H,~\bchi \in \bJ_h^H$.
Assuming the time derivative and higher space derivatives of $\bz^h$ are small, various
(modified) NLG methods are defined. In case, time derivative of $\bz^h$ is retained in the
equation, different time-steps can be employed for the two equations, (much) smaller time-step
for the equation involving $\by^H$, for example $k1=k, k2=kl+1,~l\in\mathbb{N}$. In other words,
$\bz^h$ remains steady as $\by^H$ evolves for $l$ times at each turn (see \cite{BM97}), which in
a way is equivalent to treating an evolution equation in $\by^H$ and a steady equation in $\bz^h$.
\end{remark}

The well-posedness of both the NLGs follows easily as in the case of Navier-Stokes equations, see \cite{MT89, AM94}. But for the sake of completeness, we present below {\it a priori} estimates of the approximate solution pair $\{\by^H,\bz^h\}$. And for the sake of brevity, we only sketch a proof (similar to the proofs in \cite{GP11}). Note that the proof will go through for both the NLGs.

\begin{lemma}\label{apr}
Under the assumptions of Lemma \ref{est.uh} and for $\by^H(t_0)=P_H\bu_h(t_0)$, the solution pair $\{\by^H,\bz^h\}$ of (\ref{dwfJH}) (or (\ref{dwfJH1})) satisfies, for $t>t_0$
\begin{equation}\label{apr1}
\|\by^H\|^2+e^{-2\alpha t}\int_{t_0}^t e^{2\alpha s}\|\nabla\bu^h(s)\|^2 ds \le K.
\end{equation}
And if $H$ is small enough to satisfy
\begin{equation}\label{H.restrict}
\mu-cL_H^2\|\by^H\|^2>0,
\end{equation}
where $L_H \sim |log~h|^{1/2}$, then the following estimate holds
\begin{equation}\label{apr2}
\|\bz_h\| \le K.
\end{equation}
The constant $K>0$ depends only on the given data. In particular, $K$ is independent of $h,H$ and $t$.
\end{lemma}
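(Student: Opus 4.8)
The plan is to treat both methods (\ref{dwfJH}) and (\ref{dwfJH1}) simultaneously, exploiting the fact that upon testing the two equations with $\bphi=\by^H$ and $\bchi=\bz^h$ and adding, the trilinear terms cancel. Indeed, in (\ref{dwfJH}) antisymmetry gives $b(\bu^h,\bu^h,\bu^h)=0$ at once, while in (\ref{dwfJH1}) one checks, using $b(\bu^h,\by^H,\by^H)=0$, the skew-symmetry $b(\bu^h,\bz^h,\by^H)=-b(\bu^h,\by^H,\bz^h)$, and $b(\by^H,\bz^h,\bz^h)=0$, that the contributions $b(\bu^h,\bu^h,\by^H)+b(\bu^h,\by^H,\bz^h)$ again cancel. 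Hence in either case the sum reduces to the single energy identity $\tfrac12\tfrac{d}{dt}\|\by^H\|^2+\mu\|\nabla\bu^h\|^2+\int_{t_0}^t\beta(t-s)a(\bu^h(s),\bu^h(t))\,ds=(\f,\bu^h)$, which is why the proof goes through for both NLGs.

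To obtain (\ref{apr1}) I would multiply this identity by $e^{2\alpha t}$, rewrite $e^{2\alpha t}\tfrac{d}{dt}\|\by^H\|^2=\tfrac{d}{dt}(e^{2\alpha t}\|\by^H\|^2)-2\alpha e^{2\alpha t}\|\by^H\|^2$, and use $\|\by^H\|=\|P_H\bu^h\|\le\|\bu^h\|\le\lambda_1^{-1/2}\|\nabla\bu^h\|$ together with $\alpha<\mu\lambda_1$ to dominate the $\|\by^H\|^2$ term by part of $\mu\|\nabla\bu^h\|^2$. Since $\alpha<\delta$, the exponentially weighted memory term is nonnegative by Lemma \ref{positivity}, and $(\f,\bu^h)$ is absorbed by Young's inequality and Poincaré. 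Integrating from $t_0$ to $t$, bounding the initial value $\|\by^H(t_0)\|=\|P_H\bu_h(t_0)\|\le\|\bu_h(t_0)\|$ by (\ref{est.uh1}), and multiplying back by $e^{-2\alpha t}$ yields (\ref{apr1}); this step is entirely routine.

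The substantive part is (\ref{apr2}). Here I would use that the fine-scale equation carries no time derivative, so it is an elliptic relation determining $\bz^h$ from $\by^H$ at each $t$. Testing it with $\bchi=\bz^h$ and writing $a(\bu^h,\bz^h)=a(\by^H,\bz^h)+\|\nabla\bz^h\|^2$ gives $\mu\|\nabla\bz^h\|^2=-\mu a(\by^H,\bz^h)-b(\bu^h,\by^H,\bz^h)-\int_{t_0}^t\beta(t-s)a(\bu^h(s),\bz^h)\,ds+(\f,\bz^h)$, where, as above, the trilinear term is common to both methods. I would split $b(\bu^h,\by^H,\bz^h)=b(\by^H,\by^H,\bz^h)+b(\bz^h,\by^H,\bz^h)$. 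The term $\mu a(\by^H,\bz^h)$ is controlled by (\ref{jhH2}); the memory term by $\int_{t_0}^t\|\nabla\bu^h\|^2\,ds$ from (\ref{apr1}); $(\f,\bz^h)$ by (\ref{jhH1}); and the linear-in-$\bz^h$ term $b(\by^H,\by^H,\bz^h)$ by Hölder together with (\ref{jhH1}). Each contributes a quantity of the form $C\|\nabla\bz^h\|$, which after Young's inequality leaves a bounded term plus a small multiple of $\|\nabla\bz^h\|^2$.

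The crux, and the source of both the factor $L_H$ and the restriction (\ref{H.restrict}), is the quadratic term $b(\bz^h,\by^H,\bz^h)$. I would estimate it by the two-dimensional logarithmic (Brezis--Gallouet/Agmon) inequality $\|\by^H\|_{L^\infty}\le cL_H\|\by^H\|_1$, giving $|b(\bz^h,\by^H,\bz^h)|\le cL_H\|\by^H\|_1\,\|\bz^h\|\,\|\nabla\bz^h\|$. Applying Young's inequality \emph{before} invoking (\ref{jhH1}) squares the coefficient, after which (\ref{jhH1}) on $\|\bz^h\|$ and the inverse hypothesis on $\|\by^H\|_1$ convert the surplus powers of $H$, producing $\tfrac{\mu}{4}\|\nabla\bz^h\|^2+cL_H^2\|\by^H\|^2\|\nabla\bz^h\|^2$. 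Collecting everything yields $(\mu-cL_H^2\|\by^H\|^2)\|\nabla\bz^h\|^2\le C$, so (\ref{H.restrict}) makes the leading coefficient positive and bounds $\|\nabla\bz^h\|$ by an $H$-independent constant; (\ref{jhH1}) then gives $\|\bz^h\|\le cH\|\nabla\bz^h\|\le K$. I expect the main obstacle to be precisely this nonlinear estimate: extracting the sharp $L_H^2\|\by^H\|^2$ coefficient with constants independent of $h$ and $H$, which forces the combined use of the logarithmic inequality, the inverse estimate and (\ref{jhH1}). A secondary point is that controlling $b(\by^H,\by^H,\bz^h)$ with an $H$-independent constant requires a pointwise gradient bound on $\by^H$, the analogue of (\ref{est.uh2}), which I would establish beforehand by the same weighted-energy technique.
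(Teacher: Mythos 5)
Your treatment of (\ref{apr1}) coincides with the paper's: test with $\bphi=\by^H$, $\bchi=\bz^h$, add, weight by $e^{2\alpha t}$, integrate from $t_0$ to $t$, and drop the nonnegative double integral (your explicit verification that the trilinear terms cancel for NLG II as well is correct; the paper merely asserts that the proof goes through for both methods). Your estimate of the crux term $b(\bz^h,\by^H,\bz^h)$ likewise reproduces the paper's: Brezis--Gallouet, Young, then (\ref{jhH1}) on $\|\bz^h\|$ and the inverse hypothesis on $\|\by^H\|_1$, yielding the coefficient $cL_H^2\|\by^H\|^2$ and hence the restriction (\ref{H.restrict}).

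The gap is in how you close the fine-scale estimate. You aim at $(\mu-cL_H^2\|\by^H\|^2)\|\nabla\bz^h\|^2\le C$ with $C$ independent of $H$, which requires each remaining term to contribute $C\|\nabla\bz^h\|$ with a bounded coefficient. But both $\mu a(\by^H,\bz^h)$ (even via (\ref{jhH2})) and $b(\by^H,\by^H,\bz^h)$ leave factors of $\|\by^H\|_1$, for which no pointwise-in-time bound is available at this stage: (\ref{apr1}) controls $\|\nabla\bu^h\|$ only in a weighted $L^2(t_0,t)$ sense. You acknowledge this for $b(\by^H,\by^H,\bz^h)$ and propose to establish $\|\nabla\by^H\|\le K$ beforehand ``by the same weighted-energy technique,'' but that technique does not produce such a bound; a pointwise gradient bound requires a higher-order energy argument (testing with $\td_H\by^H$ or $\by_t^H$) whose nonlinear terms in turn need control of $\bz^h$ --- precisely the content of (\ref{apr2}) --- and the paper indeed defers all such further estimates to the remark \emph{after} this lemma, under the restriction (\ref{H.restrict}). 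As proposed, your scheme is therefore circular. The paper sidesteps the issue by never bounding $\|\nabla\bz^h\|$ uniformly: it keeps the gradient-weighted terms, arriving at $\frac{\mu}{2}\|\nabla\bz^h\|^2\le K+2\mu\|\nabla\by^H\|^2+c\|\nabla\by^H\|^2(\|\by^H\|^2+L_H^2\|\bz^h\|^2)$, and then multiplies through by $H^2$: (\ref{jhH1}) converts the left side into $\|\bz^h\|^2$, while the inverse inequality $\|\nabla\by^H\|\le cH^{-1}\|\by^H\|$ together with (\ref{apr1}) absorbs every $H^2\|\nabla\by^H\|^2$ into $K$, giving $\mu\|\bz^h\|^2\le KH^2+K+cL_H^2\|\by^H\|^2\|\bz^h\|^2$ and hence (\ref{apr2}) --- a bound on $\|\bz^h\|$ only, which is all the lemma claims. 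Your argument is repaired by adopting this multiply-by-$H^2$ closure in place of an $H$-uniform bound on $\|\nabla\bz^h\|$.
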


\begin{proof}
Choose $\bphi=\by^H,\bchi=\bz^h$ in (\ref{dwfJH}) (or (\ref{dwfJH1})) and add the resulting equations. Multiply by $e^{2\alpha t}$, integrate from $t_0$ to $t$. Drop the resulting double integral term due to non-negativity. Multiply by $e^{-2\alpha t}$ to obtain (\ref{apr1}).

For the second estimate, we choose $\bchi=\bz^h$ in the second equation of (\ref{dwfJH}) (or (\ref{dwfJH1})) to obtain
\begin{align}\label{apr01}
\frac{3\mu}{4} \|\nabla\bz^h\|^2 \le 2\mu\|\nabla\by^H\|^2+\frac{\gamma^2}{\mu(\delta-\alpha)}
\int_{t_0}^t e^{-2\alpha(t-s)}\|\nabla\bu^h(s)\|^2 ds-b(\by^H+\bz^h,\by^H,\bz^h).
\end{align}
Use (\ref{apr1}) to bound the second term on the right hand-side of (\ref{apr01}).
Using (\ref{inv.hypo}) and (\ref{jhH1}), we find
\begin{align*}
b(\by^H,\by^H,\bz^h) & \le 2^{1/2}\|\by^H\|^{1/2}\|\nabla\by^H\|^{3/2}\|\bz^h\|^{1/2}
\|\nabla\bz^h\|^{1/2} \le c\|\by^H\|\|\nabla\by^H\|\|\nabla\bz^h\| \\
& \le \frac{\mu}{8}\|\nabla\bz^h\|^2+c\|\by^H\|^2\|\nabla\by^H\|^2, \\
b(\bz^h,\by^H,\bz^h) & \le \|\bz^h\|\|\nabla\bz^h\|\|\by^H\|_{\infty}
\le cL_H\|\bz^h\|\|\nabla\bz^h\|\|\nabla\by^H\| \\
& \le \frac{\mu}{8}\|\nabla\bz^h\|^2+cL_H^2\|\bz^h\|^2\|\nabla\by^H\|^2.
\end{align*}
We have used the finite dimensional case of Brezis-Gallouet inequality (see \cite[(3.12)]{LH10})
$$ \|\bu_h\|_{\infty} \le cL_h\|\nabla\bu_h\|;~~L_h\sim |log~h|^{1/2}. $$
Therefore, we obtain from (\ref{apr01})
$$ \frac{\mu}{2}\|\nabla\bz^h\|^2 \le K+2\mu\|\nabla\by^H\|^2+c\|\nabla\by^H\|^2
(\|\by^H\|^2+L_H^2\|\bz^h\|^2). $$
Again with the help of (\ref{inv.hypo}) and (\ref{jhH1}), we note that
$$ \mu\|\bz^h\|^2 \le KH^2+K+cL_H^2\|\by^H\|^2\|\bz^h\|^2. $$
Under the assumption on $H$, we have the required result, that is, (\ref{apr2}).
This completes the proof.
\end{proof}

\begin{remark}
Further estimates of $\by^H$ and $\bz^h$ can be obtained following the estimates proved in \cite{GP11} for small enough $H$ to satisfy (\ref{H.restrict}). 
\end{remark}

\section{Error Estimate}
\se

In this section, we work out the error between classical Galerkin approximation and
nonlinear Galerkin approximation of velocity.

\noindent Before actually working out the error estimates, we present below the Lemma
involving the estimate of $\bz_h$. For a proof, we refer to \cite{AM94}.

\begin{lemma}\label{bzh.est}
Under the assumptions Lemma 3.2 and for the solution $\bu_h$ of (\ref{dwfj}), the following
estimates are satisfied for $\bz_h=(I-P_H)\bu_h$ and for $t>t_0$
\begin{equation}\label{bzh.est1}
\left\{\begin{array}{rcl}
\|\bz_h\|+H\|\bz_h\|_1 \le K(t)H^2, \\
\|\bz_{ht}\|+H\|\bz_{ht}\|_1 \le K(t)H^2, \\
\|\bz_{htt}\|+H\|\bz_{htt}\|_1 \le K(t)H^2.
\end{array}\right.
\end{equation}
\end{lemma}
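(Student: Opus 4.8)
The plan is to reduce all three estimates to the spatial approximation properties of the coarse-grid $\bL^2$ projection $P_H$, treating them in parallel. First observe that $\bz_h=(I-P_H)\bu_h\in\bJ_h^H$ and, since $P_H$ is independent of $t$, also $\bz_{ht}=(I-P_H)\bu_{ht}\in\bJ_h^H$ and $\bz_{htt}=(I-P_H)\bu_{htt}\in\bJ_h^H$ (the time derivatives of a curve in the fixed finite-dimensional space $\bJ_h$ again lie in $\bJ_h$). Hence, by the superapproximation property (\ref{jhH1}) applied to each, it suffices to establish the single gradient bound $\|\nabla\bz_h\|\le K(t)H$ together with its analogues for $\bz_{ht}$ and $\bz_{htt}$; the $\bL^2$ bounds $\|\bz_h\|\le K(t)H^2$, etc., then follow at once since $\|\bz_h\|\le cH\|\nabla\bz_h\|$ by (\ref{jhH1}), and likewise $H\|\bz_h\|_1=H\|\nabla\bz_h\|\le K(t)H^2$.

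The \emph{key trick} is to exploit that $P_H$ is the identity on $\bJ_H$. Let $r_H\bu\in\bJ_H$ be the coarse-grid approximation furnished by assumption (${\bf B2}$). Since $r_H\bu\in\bJ_H$, we have $(I-P_H)r_H\bu=0$, so
\[
\bz_h=(I-P_H)\bu_h=(I-P_H)(\bu_h-r_H\bu).
\]
Combining the triangle inequality with the $\bH^1$-stability of $P_H$ on $\bJ_h$ — which is exactly the gradient part of (\ref{ph1}) read on the coarse grid, namely $\|\nabla P_H\bphi\|\le C\|\nabla\bphi\|$ for $\bphi\in\bJ_h$ — gives
\[
\|\nabla\bz_h\|\le (1+C)\,\|\nabla(\bu_h-r_H\bu)\|\le (1+C)\big(\|\nabla(\bu-\bu_h)\|+\|\nabla(\bu-r_H\bu)\|\big).
\]
The first term is $O(h)$ by Theorem \ref{errest} (its $h\|\nabla(\cdot)\|$ component), and the second is $O(H)$ by (${\bf B2}$) and the $\bH^2$-regularity of $\bu$; since $h<H$ this yields $\|\nabla\bz_h\|\le K(t)H$, and hence the first line of (\ref{bzh.est1}).

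For the time derivatives I would repeat the argument verbatim with $\bu$ replaced by $\bu_t$ and $\bu_{tt}$, writing $\bz_{ht}=(I-P_H)(\bu_{ht}-r_H\bu_t)$ and $\bz_{htt}=(I-P_H)(\bu_{htt}-r_H\bu_{tt})$ and then bounding by $\bH^1$-stability and the triangle inequality. The remaining inputs are (i) the $\bH^1$ error estimates $\|\nabla(\bu_t-\bu_{ht})\|\le K(t)h$ and $\|\nabla(\bu_{tt}-\bu_{htt})\|\le K(t)h$ for the differentiated classical Galerkin error, and (ii) the $\bH^2$-regularity of $\bu_t,\bu_{tt}$ needed to define $r_H\bu_t,r_H\bu_{tt}$ and to invoke (${\bf B2}$); both are available from the analysis in \cite{GP11}. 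I expect (i)–(ii) to be the real substance of the proof: these differentiated error and regularity estimates carry negative powers of $t$, i.e. a singularity as $t\downarrow 0$, which is exactly why the constants appear as $K(t)$ and why the NLG scheme is run only on $(t_0,T]$ with $t_0>0$, the preceding classical Galerkin phase on $(0,t_0]$ serving to escape this singularity. Granting these inputs, the spatial manipulations above are routine and deliver all three bounds.
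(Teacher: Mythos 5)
Your proposal is correct in substance and essentially reconstructs the argument the paper only points to: the paper gives no proof of Lemma \ref{bzh.est}, referring instead to \cite{AM94}, and the remark that follows it singles out exactly the ingredient you call ``the real substance'', namely the differentiated Galerkin error estimates $\|(\bu-\bu_h)^{(i)}\|+h\|(\bu-\bu_h)^{(i)}\|_1\le K(t)h^2$, $0\le i\le 2$, whose proofs the paper likewise skips as technical and lengthy. Your spatial reduction --- writing $\bz_h=(I-P_H)(\bu_h-r_H\bu)$ because $P_H$ fixes $r_H\bu\in\bJ_H$, bounding the gradient by triangle inequality plus coarse-projection stability to get $\|\bz_h\|_1\le K(t)(h+H)\le K(t)H$, then invoking (\ref{jhH1}) to drop to $\bL^2$, and repeating verbatim for $\bz_{ht}$ and $\bz_{htt}$ --- is the standard two-grid argument and works; you also correctly explain the role of $t_0>0$ and of the $t$-singular constants $K(t)$.

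Two small repairs are worth making. First, the $\bH^1$-stability of $P_H$ on $\bJ_h$ is not literally what (\ref{ph1}) provides (as stated that estimate does not cover discretely divergence-free $\bphi\in\bJ_h$, and $\bJ_h\not\subset\bJ_1$); the clean in-paper source is (\ref{jhH3}): taking $\bphi=P_H\bv_h$ and $\bchi=(I-P_H)\bv_h$ gives $\rho\|P_H\bv_h\|_1^2\le\|\bv_h\|_1^2$, i.e.\ stability with constant $\rho^{-1/2}$. (For the pure $\bL^2$ bounds you can even bypass stability: $I-P_H$ is an $\bL^2$-orthogonal projection, so $\|\bz_h^{(i)}\|\le\|(\bu-\bu_h)^{(i)}\|+\|(\bu-r_H\bu)^{(i)}\|\le K(t)(h^2+H^2)$ directly.) Second, your input (ii) quietly requires $\|\bu_{tt}\|_2\le K(t)$ in order to define $r_H\bu_{tt}$ and invoke (${\bf B2}$), which under (${\bf A2}$) (only $\f,\f_t\in L^{\infty}(\bL^2)$) is at the edge of what \cite{GP11} supplies; the paper is equally silent on this point, so it is not a defect relative to the paper's own treatment, but it should be flagged rather than absorbed into ``available from \cite{GP11}''.
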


\begin{remark}
We note that Lemma \ref{bzh.est} requires the estimates of higher order time derivatives of error due to Galerkin approximation:
$$ \|(\bu-\bu_h)^{(i)}\|+h\|(\bu-\bu_h)^{(i)}\|_1 \le K(t)h^2,~~~0\le i\le 2, $$
where $(\cdot)^{(i)}$ means $i^{th}$ time derivative. For $i=0$, the result is stated in Theorem \ref{errest}. For the remaining cases, the proof consists of differentiating the error equation in time and deriving estimates. Proofs are technical and lengthy and hence are skipped for the sake of brevity.
\end{remark}

\noindent 
In order to separate the effect of the nonlinearity in the error, we introduce
$$ \bbu (\in\bJ_h)=P_H\bbu+(I-P_H)\bbu =\bby+\bbz\in \bJ_H+\bJ_h^H $$
satisfying the following linearized system ($t>t_0$)
\begin{align}\label{dwfjHl}
(\bby_t,\bphi) +& \mu a(\bbu,\bphi)+ \int_{t_0}^t \beta(t-s) a(\bbu(s), \bphi)~ds
= -b( \bu_h, \bu_h, \bphi)+(\f,\bphi)~~ \bphi \in \bJ_H \nonumber \\
& \mu a (\bbu,\bchi)+ \int_{t_0}^t \beta(t-s) a(\bbu(s), \bchi)~ds
= -b( \bu_h, \bu_h, \bchi)+(\f,\bchi)~~ \bchi \in \bJ_h^H,
\end{align}
and $\bby(t_0)=\by_H(t_0).$ Being linear it is easy to establish the well-posedness
of the above system and the following estimates.
\begin{lemma}
Under the assumptions of Lemma 3.2, we have
\begin{align*}
\|\nabla\bbu^h\|^2+e^{-2\alpha t}\int_{t_0}^t e^{2\alpha t}\|\td_h\bbu^h\|^2ds &\le K \\
\|\td_h\bbu^h\| &\le K,
\end{align*}
where the constant depends on $\bu_0$ and $\f$.
\end{lemma}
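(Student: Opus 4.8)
The plan is to prove the two {\it a priori} bounds for the linearized system (\ref{dwfjHl}) by energy methods, mimicking the structure of the proof of Lemma \ref{apr}, but now exploiting linearity to avoid the logarithmic restriction on $H$. The crucial simplification is that the nonlinear term $b(\bu_h,\bu_h,\cdot)$ on the right-hand side involves only the (known) classical Galerkin solution $\bu_h$, so it is a genuine forcing term rather than a quadratic nonlinearity in the unknowns $\bby,\bbz$. By Lemma \ref{est.uh}, $\bu_h$ and $\nabla\bu_h$ are bounded uniformly in $h,t$, so this term is controllable.

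First I would establish the gradient estimate. I would add the two equations in (\ref{dwfjHl}) with the test functions $\bphi=-\td_h\bby$ and $\bchi=-\td_h\bbz$; since $\bJ_H$ and $\bJ_h^H$ are $\bL^2$-orthogonal and $\td_h\bbu^h=\td_h\bby+\td_h\bbz$, the coupling organizes into the single identity
\begin{equation*}
\frac{1}{2}\frac{d}{dt}\|\nabla\bby\|^2+\mu\|\td_h\bbu^h\|^2+\int_{t_0}^t\beta(t-s)(\nabla\bbu^h(s),\nabla\td_h\bbu^h(t))\,ds = b(\bu_h,\bu_h,\td_h\bbu^h)-(\f,\td_h\bbu^h).
\end{equation*}
I would multiply by $e^{2\alpha t}$ and integrate from $t_0$ to $t$, using Lemma \ref{positivity} to discard the memory double integral as non-negative. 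The forcing terms are bounded using Lemma \ref{nonlin} (second line) to estimate $|b(\bu_h,\bu_h,\td_h\bbu^h)|\le K\|\bu_h\|^{1/2}\|\td_h\bu_h\|^{1/2}\|\nabla\bu_h\|\|\td_h\bbu^h\|$ together with the {\it a priori} bounds of Lemma \ref{est.uh}, then absorbing the $\|\td_h\bbu^h\|$ factor via Young's inequality into the $\mu\|\td_h\bbu^h\|^2$ term on the left. Since $\bby(t_0)=\by_H(t_0)=P_H\bu_h(t_0)$ has bounded gradient by Lemma \ref{est.uh}, Gronwall's inequality yields $\|\nabla\bby\|\le K$ and the integral bound; the gradient of $\bbz$ (hence of $\bbu^h$) then follows from the orthogonal splitting estimate (\ref{jhH3}).

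To obtain the uniform bound $\|\td_h\bbu^h\|\le K$, I would differentiate the system (\ref{dwfjHl}) in time, test with $\bphi=\bby_t$, $\bchi=\bbz_t$, and run a second energy estimate to control $\|\bby_t\|$ and $\int\|\nabla\bbu_t^h\|^2$; here the time-differentiated memory kernel produces a boundary term $\gamma\,a(\bbu^h,\bbu_t^h)$ plus a convolution, both handled by the gradient bound already in hand and positivity. With $\|\bby_t\|$ controlled, I would return to the $\td_h$-equation read pointwise in $t$ (elliptic regularity via assumption (${\bf A1}$) transferred to the discrete operator), bounding $\|\td_h\bbu^h\|$ directly in terms of $\|\bby_t\|$, the forcing $\|\f\|$, $b(\bu_h,\bu_h,\cdot)$, and the memory integral of $\|\nabla\bbu^h\|$. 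The main obstacle is the time-derivative estimate: it requires controlling $\bby_t(t_0)$, which forces me to invoke the regularity of $\bu_h$ at $t=t_0$ (precisely the reason NLG is started at $t_0>0$, per Remark), and to handle the differentiated memory term carefully so that the kernel's $\gamma a(\bbu^h,\bbu_t^h)$ contribution does not spoil the non-negativity structure; I would manage this by integrating that term by parts in $t$ and absorbing it using the already-established gradient bound.
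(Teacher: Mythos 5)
Your plan correctly identifies the two features that make this lemma tractable — the nonlinearity $b(\bu_h,\bu_h,\cdot)$ is a known forcing term by Lemma \ref{est.uh}, and linearity removes any smallness condition of the type (\ref{H.restrict}) — but your first energy identity is where the argument breaks. The first equation of (\ref{dwfjHl}) holds only for $\bphi\in\bJ_H$ and the second only for $\bchi\in\bJ_h^H$, while $\td_h\bby\notin\bJ_H$ and $\td_h\bbz\notin\bJ_h^H$ in general: $\td_h$ does not leave the coarse space invariant (if it did, $P_H$ would commute with $\td_h$ and (\ref{jhH2}) would hold with $\rho=1$, i.e.\ the splitting would be $a$-orthogonal, which it is not). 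So your test functions are inadmissible. The natural repair --- combine the two equations into a single identity valid for every $\bv\in\bJ_h$, which is legitimate because $(\bby_t,\bchi)=0$ for all $\bchi\in\bJ_h^H$ by the $\bL^2$-orthogonality of (\ref{space.split}), and then take $\bv=\td_h\bbu$ --- does not rescue your identity either: the time term becomes $(\bby_t,\td_h\bbu)=a(\bby_t,\bby)+a(\bby_t,\bbz)$, and the cross term $a(\bby_t,\bbz)$ survives precisely because the splitting is orthogonal in $\bL^2$ but not in $a(\cdot,\cdot)$. Your claimed identity silently sets this term to zero, and nothing in your plan controls it. A sound route is instead to test the combined equation with $\bv=\bbu_t$, for which the time term is exactly $\|\bby_t\|^2$ by $\bL^2$-orthogonality, integrate the forcing by parts in time ($(\f,\bbu_t)=\frac{d}{dt}(\f,\bbu)-(\f_t,\bbu)$, and similarly for $b(\bu_h,\bu_h,\bbu_t)$ using estimates of $\bu_{h,t}$, with $\f_t$ supplied by ({\bf A2})), and treat the memory term by the usual differentiation-under-the-integral trick; this gives $\|\nabla\bbu\|\le K$ and $\int_{t_0}^t\|\bby_t\|^2ds\le K$, after which the combined equation with $\bv=\td_h\bbu$, namely $\mu\|\td_h\bbu\|^2=-(\bby_t,\td_h\bbu)-\int_{t_0}^t\beta(t-s)(\td_h\bbu(s),\td_h\bbu(t))\,ds-b(\bu_h,\bu_h,\td_h\bbu)+(\f,\td_h\bbu)$, yields both $\td_h$ bounds. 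A second, smaller error: (\ref{jhH3}) reads $\rho\{\|\bphi\|_1^2+\|\bchi\|_1^2\}\le\|\bphi+\bchi\|_1^2$, i.e.\ it bounds the components \emph{by} the sum, so a bound on $\|\nabla\bby\|$ alone gives nothing for $\|\nabla\bbz\|$; the gradient of $\bbz$ must come from the second (elliptic) equation tested with $\bchi=\bbz$, as in the proof of Lemma \ref{apr}.

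For the pointwise bound $\|\td_h\bbu\|\le K$, your architecture (differentiate in time, control $\|\bby_t\|$ pointwise, then read the equation elliptically) is indeed the right one and is in the spirit of the arguments of \cite{GP11} that the paper implicitly invokes --- note the paper offers no proof at all, only the remark that the system is linear, so there is no ``paper route'' to match against. But as written your second step inherits the broken first step, and the initial value $\bby_t(t_0)$ is waved at rather than estimated: it must be extracted from the first equation at $t=t_0$ (where the memory integral vanishes), which in turn requires controlling $a(\bbu(t_0),\bphi)$ for $\bphi\in\bJ_H$ without paying an inverse-inequality factor $H^{-1}$, with $\bbz(t_0)$ determined by the elliptic second equation at $t=t_0$ and $\bby(t_0)=P_H\bu_h(t_0)$; making this precise, together with the $t\ge t_0>0$ regularity of $\bu_{h,t}$ entering the differentiated right-hand side, is where the genuine technical work of the second half lies, and your sketch does not yet supply it.
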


\subsection{NLG I}

We define
$$ \e :=\bu_h-\bu^h=(\by_H-\by^H)+(\bz_h-\bz^h) =: \e_1+\e_2. $$
We further split the errors as follows:
\begin{align} \label{err.split}
\left\{\begin{array}{rcl}
\e_1 &=& \by_H-\by^H = (\by_H-\bby)-(\by^H-\bby) =\bxi_1-\bta_1 \in\bJ_H \\
\e_2 &=& \bz_h-\bz^h = (\bz_h-\bbz)-(\bz^h-\bbz) =\bxi_2-\bta_2 \in\bJ_h^H.
\end{array}\right.
\end{align}
For the sake of simplicity, we write
$$ \bxi=\bxi_1+\bxi_2,~~~\bta=\bta_1+\bta_2. $$
For the equations in $\bxi$ and $\bta$: subtract (\ref{dwfjHl}) from (\ref{dwfjH})
and subtract (\ref{dwfjHl}) from (\ref{dwfJH}) to obtain
\begin{align} \label{bxi12}
\left\{\begin{array}{rl}
(\bxi_{1,t},\bphi)+& \mu a (\bxi,\bphi)+ \int_{t_0}^t \beta(t-s) a(\bxi(s),\bphi)~ds=0 \\
& \mu a (\bxi,\bchi)+ \int_{t_0}^t \beta(t-s) a(\bxi(s), \bchi)~ds= -(\bz_{ht},\bchi),
\end{array}\right.
\end{align}
\begin{align} \label{bta12}
\left\{\begin{array}{rl}
(\bta_{1,t},\bphi) +& \mu a (\bta,\bphi)+ \int_{t_0}^t \beta(t-s) a(\bta(s), \bphi)~ds
= b(\bu_h,\bu_h,\bphi)-b(\bu^h,\bu^h,\bphi) \\
& \mu a (\bta,\bchi)+ \int_{t_0}^t \beta(t-s) a(\bta(s), \bchi)~ds= b(\bu_h,\bu_h,\bchi)
-b(\bu^h,\bu^h,\bchi),
\end{array}\right.
\end{align}
for $\bphi\in\bJ_H$ and $\bchi\in\bJ_h^H.$
\begin{lemma}\label{l2l2.bxi}
Under the assumptions of Lemma 3.2
\begin{equation}\label{l2l2.bxi1}
e^{-2\alpha t}\int_{t_0}^t e^{2\alpha \tau}\|\bxi(\tau)\|^2d\tau \le K(t)H^8.
\end{equation}
\end{lemma}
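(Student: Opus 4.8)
The plan is to use the $\bL^2$-orthogonal splitting $\bJ_h=\bJ_H+\bJ_h^H$, so that $\|\bxi\|^2=\|\bxi_1\|^2+\|\bxi_2\|^2$, and to treat the two pieces differently. A weighted energy estimate controls $\|\nabla\bxi\|$ in $L^2$ and, through (\ref{jhH1}), delivers the full $O(H^8)$ bound for the fine component $\bxi_2$ in one stroke; the coarse component $\bxi_1$ comes out only at $O(H^6)$ from the energy argument and requires a separate duality (Aubin--Nitsche type) argument to gain the missing factor $H^2$.

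First I would derive the energy estimate. Taking $\bphi=\bxi_1$ in the first equation of (\ref{bxi12}) and $\bchi=\bxi_2$ in the second and adding, the bilinear terms combine to $\mu a(\bxi,\bxi)=\mu\|\nabla\bxi\|^2$, the memory terms combine to the convolution $\int_{t_0}^t\beta(t-s)a(\bxi(s),\bxi(t))\,ds$, and the right-hand side is $-(\bz_{ht},\bxi_2)$, giving
\begin{equation*}
\tfrac12\tfrac{d}{dt}\|\bxi_1\|^2+\mu\|\nabla\bxi\|^2+\int_{t_0}^t\beta(t-s)a(\bxi(s),\bxi(t))\,ds=-(\bz_{ht},\bxi_2).
\end{equation*}
Since $\bxi_2\in\bJ_h^H$, (\ref{jhH1}) with (\ref{jhH3}) gives $\|\bxi_2\|\le cH\|\nabla\bxi\|$, so the right-hand side is at most $\tfrac{\mu}{2}\|\nabla\bxi\|^2+cH^2\|\bz_{ht}\|^2$. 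Multiplying by $e^{2\alpha t}$, integrating from $t_0$ to $t$, discarding the memory double integral by Lemma \ref{positivity}, and absorbing the weight term $\alpha e^{2\alpha t}\|\bxi_1\|^2$ into $\mu e^{2\alpha t}\|\nabla\bxi\|^2$ via Poincar\'e (using $\alpha<\mu\lambda_1$), I would use $\bxi_1(t_0)=\by^H(t_0)-\bby(t_0)=0$ and $\|\bz_{ht}\|\le K(t)H^2$ from Lemma \ref{bzh.est} to obtain
\begin{equation*}
e^{2\alpha t}\|\bxi_1(t)\|^2+\mu\int_{t_0}^t e^{2\alpha s}\|\nabla\bxi\|^2\,ds\le K(t)H^6e^{2\alpha t}.
\end{equation*}
Feeding $\|\bxi_2\|^2\le cH^2\|\nabla\bxi\|^2$ back in yields $e^{-2\alpha t}\int_{t_0}^t e^{2\alpha s}\|\bxi_2\|^2\,ds\le K(t)H^8$, the desired bound for the fine part.

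For $\bxi_1$ I would run a duality argument, since the energy estimate gives only $\|\bxi_1\|\le K(t)H^3$. Testing the two equations of (\ref{bxi12}) with the $\bJ_H$ and $\bJ_h^H$ components of an arbitrary $\bw\in\bJ_h$, adding, and using $\bxi_{1,t}\in\bJ_H$ and $\bz_{ht}\in\bJ_h^H$, produces the combined identity
\begin{equation*}
(\bxi_{1,t},\bw)+\mu a(\bxi,\bw)+\int_{t_0}^t\beta(t-s)a(\bxi(s),\bw)\,ds=-(\bz_{ht},\bw),\qquad\forall\,\bw\in\bJ_h.
\end{equation*}
To estimate $I:=\int_{t_0}^t e^{2\alpha s}\|\bxi_1\|^2\,ds$ for fixed $t$, I would introduce the backward dual solution $\bzt(s)\in\bJ_1$, $s\in[t_0,t]$, with $\bzt(t)=0$, solving $-(\bzt_s,\bv)+\mu a(\bzt,\bv)+\int_s^t\beta(\sigma-s)a(\bzt(\sigma),\bv)\,d\sigma=e^{2\alpha s}(\bxi_1(s),\bv)$ for all $\bv\in\bJ_1$, which by (${\bf A1}$) and the positivity of its (adjoint) kernel satisfies $\int_{t_0}^t e^{2\alpha s}\|\bzt\|_2^2\,ds\le Ce^{2\alpha t}I$. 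Pairing the dual problem against $\bxi$ and the combined identity against $\bzt$ (modulo the conformity gap noted below), subtracting, interchanging the order of integration so the two memory terms cancel, and integrating the time-derivative terms by parts (the boundary contributions vanishing because $\bxi_1(t_0)=0$ and $\bzt(t)=0$), I expect to reach $I=-\int_{t_0}^t e^{2\alpha s}(\bz_{ht},\bzt)\,ds$. Here the gain appears: since $\bz_{ht}\in\bJ_h^H$ is orthogonal to $\bJ_H$, one has $(\bz_{ht},\bzt)=(\bz_{ht},(I-P_H)\bzt)$, and (\ref{ph2}) gives $\|(I-P_H)\bzt\|\le CH^2\|\bzt\|_2$. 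With $\|\bz_{ht}\|\le K(t)H^2$, Cauchy--Schwarz then yields $I\le K(t)H^4 e^{\alpha t}\big(\int_{t_0}^t e^{2\alpha s}\|\bzt\|_2^2\,ds\big)^{1/2}\le K(t)H^4 e^{2\alpha t}I^{1/2}$, hence $I\le K(t)H^8 e^{4\alpha t}$, i.e. $e^{-2\alpha t}\int_{t_0}^t e^{2\alpha s}\|\bxi_1\|^2\,ds\le K(t)H^8$. Adding the $\bxi_1$ and $\bxi_2$ contributions gives (\ref{l2l2.bxi1}).

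The main obstacle is the duality step, and specifically the Volterra memory term. One must choose the adjoint kernel $\beta(\sigma-s)$ with backward convolution so that, after a Fubini interchange, it cancels the forward convolution of the primal identity exactly, and one must establish the $L^2(t_0,t;\bH^2)$ regularity of $\bzt$ in the presence of that term; the non-negativity of Lemma \ref{positivity}, applied to the adjoint kernel, is what keeps this bound uniform. A secondary technical point is that $\bJ_h\not\subset\bJ_1$, so $\bzt$ is not directly an admissible test function in the discrete combined identity; this mismatch must be absorbed, either by posing a discrete dual problem and separately estimating its distance from the continuous one, or by inserting the interpolant $r_h\bzt$ from (${\bf B2}$) and bounding the resulting consistency error, which is again of the right order in $H$ by (\ref{ph2}).
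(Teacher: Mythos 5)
Your strategy coincides with the paper's in its two main moves: a weighted energy estimate giving the $O(H^6)$ bound (\ref{l2l2.bxi.2}), followed by a backward-in-time duality argument in which the entire extra gain comes from the orthogonality $(\bz_{ht},\bw)=(\bz_{ht},\bw-P_H\bw)\le K(t)H^2\cdot cH^2\|\bw\|_2$, which is exactly the paper's step (\ref{new.est}). Your shortcut for the fine component --- feeding $\|\bxi_2\|\le cH\|\bxi_2\|_1$ from (\ref{jhH1}) back into the energy bound to obtain the $H^8$ estimate for $\int_{t_0}^t e^{2\alpha s}\|\bxi_2\|^2\,ds$ with no duality at all --- is correct, and is in fact a small simplification over the paper, which runs the duality for all of $\bxi$ at once (the dual problem (\ref{bp1}) carries both $\bxi_1$ and $\bxi_2$ on its right-hand side).

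There is, however, a genuine gap in your duality step for $\bxi_1$. Your dual problem is a full parabolic--Volterra problem in $\bJ_1$: the time derivative $\bzt_s$ is paired against the \emph{entire} test function. But the primal system (\ref{bxi12}) is degenerate --- its $\bJ_h^H$-equation carries no time derivative. Hence, pairing the dual against $\bxi=\bxi_1+\bxi_2$ and the combined identity against $\bzt$, the time-derivative terms combine to $-\tfrac{d}{ds}(\bxi_1,\bzt)-(\bzt_s,\bxi_2)$; only the first piece vanishes after integration via $\bxi_1(t_0)=0$ and $\bzt(t)=0$. Your claimed identity $I=-\int_{t_0}^t e^{2\alpha s}(\bz_{ht},\bzt)\,ds$ is therefore false as stated: the term $-\int_{t_0}^t(\bzt_s,\bxi_2)\,ds$ does not cancel, and it cannot be integrated by parts away since $\bxi_2(t_0)\neq 0$ in general and no estimate for $\bxi_{2,t}$ is available. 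The paper avoids this by design: its dual (\ref{bp1}) is posed \emph{discretely} in $\bJ_h=\bJ_H+\bJ_h^H$ with the time derivative only in the $\bJ_H$-equation and none in the $\bJ_h^H$-equation, mirroring the primal structure exactly, so the pairing is exact; this simultaneously disposes of the conformity issue $\bJ_h\not\subset\bJ_1$ that you flag but leave unresolved. Your fallback of ``posing a discrete dual problem'' is thus not an optional variant --- it is essentially the paper's proof, and you have not carried it out. Your continuous version is repairable without changing the dual: by $\bL^2$-orthogonality and Cauchy--Schwarz, $|\int_{t_0}^t(\bzt_s,\bxi_2)\,ds|\le \big(\int_{t_0}^t e^{-2\alpha s}\|\bzt_s\|^2ds\big)^{1/2}\big(\int_{t_0}^t e^{2\alpha s}\|\bxi_2\|^2ds\big)^{1/2}$, and your own step-one bound makes the second factor $O(H^4)e^{\alpha t}$, of the same order as the main term; but this requires the additional dual regularity $\int_{t_0}^t e^{-2\alpha s}\|\bzt_s\|^2ds\le Ce^{2\alpha t}I$, which you neither state nor prove and which must be established in the presence of the memory term.
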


\begin{proof}
Choose $\bphi=e^{2\alpha t}\bxi_1,~\bchi=e^{2\alpha t}\bxi_2$ in (\ref{bxi12}), add the
two resulting equations and with the notation $\hxi=e^{\alpha t}\bxi$, we get
\begin{equation}\label{l2l2.bxi01}
\frac{1}{2}\frac{d}{dt}\|\hxi_1\|^2-\alpha\|\hxi_1\|^2+\mu\|\hxi\|_1^2+\int_{t_0}^t
\beta(t-s) e^{\alpha(t-s)} a(\hxi(s),\hxi)~ds \le e^{\alpha t}\|\bz_{ht}\|\|\hxi_2\|
\end{equation}
Using (\ref{jhH1}) and (\ref{bzh.est1}), we can bound the right-hand side as:
$$ \le e^{\alpha t}.K(t)H^2.cH\|\hxi_2\|_1 \le \frac{\mu\rho}{2}\|\hxi_2\|_1^2
+K(t)H^6.e^{2\alpha t}. $$
And using (\ref{jhH3}), we have
$$ -\alpha\|\hxi_1\|^2+\mu\|\hxi\|_1^2 \ge (\mu\rho-\frac{\alpha}{\lambda_1})
\|\hxi_1\|_1^2+\mu\rho\|\hxi_2\|_1^2. $$
As a result, we obtain from (\ref{l2l2.bxi01})
\begin{equation*}
\frac{d}{dt}\|\hxi_1\|^2+2(\mu\rho-\frac{\alpha}{\lambda_1})\|\hxi_1\|_1^2
+\mu\rho\|\hxi_2\|_1^2+2\int_{t_0}^t
\beta(t-s) e^{\alpha(t-s)} a(\hxi(s),\hxi)~ds \le K(t)H^6 e^{2\alpha t}.
\end{equation*}
Integrate from $t_0$ to $t$ and multiply the resulting inequality by $e^{-2\alpha t}$.
Note that the double integral turns out to be non-negative and hence
\begin{equation}\label{l2l2.bxi.2}
\|\bxi_1\|^2+e^{-2\alpha t}\int_{t_0}^t (\|\hxi_1\|_1^2+\|\hxi_2\|_1^2)~ds \le K(t)H^6.
\end{equation}
To obtain $L^2(\bL^2)$-norm estimate, we consider the following discrete backward
problem: for fixed $t_0$, let $\bw(\tau)\in \bJ_h,~\bw=\bw_1+\bw_2$ such that $\bw_1
\in \bJ_H,~\bw_2\in \bJ_h^H$ be the unique solution of ($t_0\le \tau <t$)
\begin{equation}\label{bp1} 
\left\{\begin{array}{rcl}
(\bphi,\bw_{1,\tau}) &-&\mu a(\bphi,\bw)-\int_{\tau}^t \beta(s-\tau) a(\bphi,\bw(s))
~ds = e^{2\alpha t}(\bphi,\bxi_1) \\
&-& a(\bchi,\bw)-\int_{\tau}^t \beta(s-\tau) a(\bchi,\bw(s))~ds= e^{2\alpha t}
(\bchi,\bxi_2) \\
&& \bw_1(t)=0.
\end{array}\right.
\end{equation}
With change of variable, we can make it a forward problem and it turns out to be
a linearized version of (\ref{dwfJH}) and hence is well-posed. As in \cite{GP11},
we can easily obtain the following regularity result.
\begin{equation}\label{bp1.est}
\int_{t_0}^t e^{-2\alpha \tau}\|\bw\|_2^2d\tau \le C\int_{t_0}^t \|\hxi\|^2d\tau.
\end{equation}
Now, choose $\bphi=\bxi_1,~\bchi=\bxi_2$ and use (\ref{bxi12}) with $\bphi=\bw_1, ~\bchi=\bw_2$ to find that
\begin{align*}
\|\hxi(\tau)\|^2 &= (\bxi_1,\bw_{1,\tau})-\mu a (\bxi,\bw)-\int_{\tau}^t \beta(s-\tau)
a(\bxi,\bw(s))~ds \\
\le \frac{d}{dt} (\bxi_1,\bw_1) &+\int_{t_0}^\tau \beta(\tau-s) a(\bxi(s),\bw)~ds
-\int_{\tau}^t \beta(s-\tau) a(\bxi,\bw(s))~ds+(\bz_{ht},\bw_2).
\end{align*}
Integrate from $t_0$ to $t$ and note that the double integral terms cancel each other.
\begin{equation}\label{l2l2.bxi03}
\int_{t_0}^t \|\hxi(\tau)\|^2d\tau =((\bxi_1(t),\bw_1(t))-(\bxi_1(t_0),\bw_1(t_0))
+\int_{t_0}^t (\bz_{ht},\bw_2) d\tau.
\end{equation}
But $\bw_1(t)=0$ and $\bxi_1(t_0)=\by_H(t_0)-\bby(t_0)=0$. Next, we observe that
$$ \bw_1\in\bJ_H \implies P_H\bw_1=\bw_1,~~\bw_2\in\bJ_h^H \implies P_H\bw_2=0. $$
As a result
$$ \bw_1-P_H\bw_1=0,~~\bw_2-P_H\bw_2=\bw_2,~~\mbox{or}~~\bw-P_H\bw=\bw_2. $$
Therefore,
\begin{align}\label{new.est}
(\bz_{ht},\bw_2) &=(\bz_{ht},\bw-P_H\bw) \nonumber \\
& \le \|\bz_{ht}\|\|\bw-P_H\bw\| \le K(t)H^2.cH^2\|\bw\|_2.
\end{align}
From (\ref{l2l2.bxi03}), we get
$$ \int_{t_0}^t \|\hxi(\tau)\|^2d\tau \le K(t) e^{2\alpha t}H^4\Big(\int_{t_0}^t
e^{-2\alpha \tau}\|\bw\|_2^2d\tau\Big)^{1/2}. $$
Use (\ref{bp1.est}) to conclude.
\end{proof}
In order to obtain optimal $L^{\infty}(\bL^2)$ estimate, we would like to introduce
Stokes-Volterra type projections $(S_H,S_h^H)$ for $t>t_0$ defined as below:
$$ S_H:\bJ_h\to \bJ_H,~~S_h^H:\bJ_h\to \bJ_h^H, $$
and with the notations
$$\bzt_1 :=\by_H-S_H\bu_h \in \bJ_H,~~\bzt_2 :=\bz_h-S_h^H\bu_h\in\bJ_h^H $$
the following system is satisfied.
\begin{align} \label{bzt12}
\left\{\begin{array}{rl}
\mu a (\bzt,\bphi)+ \int_{t_0}^t \beta(t-s) a(\bzt(s),\bphi)~ds=0,~\bphi\in\bJ_H, \\
\mu a (\bzt,\bchi)+ \int_{t_0}^t \beta(t-s) a(\bzt(s), \bchi)~ds= -(\bz_{ht},\bchi),
~\bchi\in\bJ_h^H.
\end{array}\right.
\end{align}
For the sake of convenience, we have written $\bzt=\bzt_1+\bzt_2.$
\begin{lemma}\label{l2.bzt}
Under the assumptions of Lemma 3.2
\begin{equation}\label{l2.bzt1}
\|\bzt\|+\|\bzt_t\| \le K(t)H^4.
\end{equation}
\end{lemma}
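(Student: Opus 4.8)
The plan is to prove the two bounds in two stages. Since the defining system (\ref{bzt12}) carries no time derivative of $\bzt$, each estimate holds pointwise in $t$ and the only coupling across time is through the memory kernel $\beta$. First I would establish an $\bH^1$-bound of order $H^3$ for $\bzt$ (and later $\bzt_t$) by an energy argument, and then upgrade it to the optimal $\bL^2$-bound of order $H^4$ by an Aubin--Nitsche duality argument. The two key mechanisms producing the extra powers of $H$ are the fine-scale projection $(I-P_H)$ acting on the data $\bz_{ht}$ and the $\bL^2$-projection estimate (\ref{ph2}).

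For the energy estimate I would test the first equation of (\ref{bzt12}) with $\bphi=\bzt_1$ and the second with $\bchi=\bzt_2$ and add, using $\bzt=\bzt_1+\bzt_2$, to obtain
$$ \mu\|\bzt\|_1^2 = -(\bz_{ht},\bzt_2) - \int_{t_0}^t\beta(t-s)a(\bzt(s),\bzt)\,ds. $$
On the right I would bound $|(\bz_{ht},\bzt_2)|\le\|\bz_{ht}\|\,\|\bzt_2\|$, invoke (\ref{jhH1}) and (\ref{jhH3}) to write $\|\bzt_2\|\le cH\|\bzt_2\|_1\le cH\rho^{-1/2}\|\bzt\|_1$, and use $\|\bz_{ht}\|\le K(t)H^2$ from (\ref{bzh.est1}); the memory term is dominated by $\|\bzt\|_1\int_{t_0}^t\beta(t-s)\|\bzt(s)\|_1\,ds$. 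Dividing through by $\|\bzt\|_1$ and applying Gronwall's inequality (the kernel $\beta$ being bounded) then yields $\|\bzt(t)\|_1\le K(t)H^3$.

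For the $\bL^2$-estimate I would fix $t$ and introduce the dual Stokes problem $-\Delta\bw+\nabla\pi=\bzt(t)$, $\nabla\cdot\bw=0$, $\bw|_{\partial\Omega}=0$, whose regularity (${\bf A1}$) gives $\|\bw\|_2\le C\|\bzt(t)\|$. Writing $\|\bzt(t)\|^2=a(\bzt(t),\bw)-(\nabla\cdot\bzt(t),\pi)$ and splitting $\bw=(\bw-P_h\bw)+P_h\bw$, I would bound $a(\bzt(t),\bw-P_h\bw)$ by $\|\bzt(t)\|_1\cdot CH\|\bw\|_2\le K(t)H^4\|\bzt(t)\|$ via the $H^3$ bound, and dispose of the pressure term by discrete divergence-freeness, $(\nabla\cdot\bzt(t),\pi)=(\nabla\cdot\bzt(t),\pi-j_h\pi)\le K(t)H^4\|\bzt(t)\|$, using (${\bf B1}$). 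Feeding $P_h\bw$ into (\ref{bzt12}), the data term $-(\bz_{ht},(I-P_H)P_h\bw)$ is controlled by noting $(I-P_H)P_h\bw=P_h\bw-P_H\bw$, so (\ref{ph2}) gives $\|(I-P_H)P_h\bw\|\le CH^2\|\bw\|_2$ and hence a contribution $K(t)H^2\cdot CH^2\|\bw\|_2=K(t)H^4\|\bzt(t)\|$. The main obstacle is the remaining memory contribution $\int_{t_0}^t\beta(t-s)a(\bzt(s),P_h\bw)\,ds$: estimated naively in $\bH^1$ it produces only $H^3$. I would instead rewrite $a(\bzt(s),\bw)=(\bzt(s),-\Delta\bw)=(\bzt(s),\bzt(t))$ up to the (higher-order) pressure and approximation remainders, thereby replacing $\|\bzt(s)\|_1$ by $\|\bzt(s)\|$ in the history. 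Collecting everything and dividing by $\|\bzt(t)\|$ leaves
$$ \|\bzt(t)\|\le K(t)H^4+\frac{1}{\mu}\int_{t_0}^t\beta(t-s)\|\bzt(s)\|\,ds, $$
and a second Gronwall application closes $\|\bzt(t)\|\le K(t)H^4$.

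Finally, for $\bzt_t$ I would differentiate (\ref{bzt12}) in $t$; by Leibniz's rule the memory term produces the instantaneous term $\gamma\,a(\bzt,\cdot)$ (since $\beta(0)=\gamma$) and $-\delta\int_{t_0}^t\beta(t-s)a(\bzt(s),\cdot)\,ds$, while the data becomes $-(\bz_{htt},\bchi)$. The essential observation is that the resulting system for $\bzt_t$ contains no memory in $\bzt_t$ itself: the history enters only through the already-estimated $\bzt$, so both forcings $\gamma\,a(\bzt,\cdot)$ and $\int\beta\,a(\bzt(s),\cdot)\,ds$ are known and no further Gronwall loop is needed. Repeating the energy and duality steps verbatim, now using $\|\bz_{htt}\|\le K(t)H^2$ together with $\|\bzt\|_1\le K(t)H^3$ and $\|\bzt\|\le K(t)H^4$ to absorb these forcings, gives $\|\bzt_t\|_1\le K(t)H^3$ and then $\|\bzt_t\|\le K(t)H^4$, which together with the previous bound establishes (\ref{l2.bzt1}).
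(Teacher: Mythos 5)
Your proposal is correct, and it reproduces the paper's overall architecture: an energy step giving $\|\bzt\|_1\le K(t)H^3$, an Aubin--Nitsche duality step in which the extra $H^2$ comes from testing the data against a fine-scale function ($\|\bz_{ht}\|\le K(t)H^2$ paired with $\|(I-P_H)(\cdot)\|\le CH^2\|\cdot\|_2$), and then differentiation of (\ref{bzt12}) in time followed by a verbatim repetition, with the correct observation that the differentiated system carries no memory in $\bzt_t$ itself. You deviate in two technical respects, both legitimate. First, the paper poses the dual problem \emph{discretely}: $\bw_h\in\bJ_h$ with $\mu a(\bv,\bw_h)=(\bv,\hzt)$ for all $\bv\in\bJ_h$, so that $\td_h\bw_h$ is exactly the data and the memory term collapses, with no remainders, to $\int_{t_0}^t\beta(t-s)e^{\alpha(t-s)}(\hzt(s),\hzt)\,ds$; your continuous Stokes dual forces you to carry a pressure term (correctly disposed of via discrete divergence-freeness and ({\bf B1})) and the projection error $\bw-P_h\bw$ (correctly disposed of via (\ref{ph2})), all of order $hH^3\le H^4$ thanks to the stage-one $H^3$ bound --- more bookkeeping, same gain mechanism, and your identity $(I-P_H)P_h\bw=P_h\bw-P_H\bw$ is valid since $P_HP_h=P_H$ for the nested $\bL^2$-projections. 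Second, the paper never invokes Gronwall in this lemma: it multiplies by $e^{2\alpha t}$, integrates in time, drops the double integral by the positive-definiteness of the exponential kernel (Lemma \ref{positivity}) to obtain the intermediate $L^2$-in-time bounds $e^{-2\alpha t}\int_{t_0}^t\|\hzt\|_1^2\,ds\le K(t)H^6$ and $e^{-2\alpha t}\int_{t_0}^t\|\hzt\|^2\,ds\le K(t)H^8$, and then closes the pointwise estimates by Cauchy--Schwarz on the history integral; you instead close the two Volterra inequalities directly by Gronwall, which is fine for the bounded kernel $\beta(t-s)\le\gamma$ but yields constants growing like $e^{ct}$ and makes no use of the kernel's sign structure --- an acceptable trade here since $K(t)$ may depend on $t$, though the paper's weighted-positivity route is the one consistent with its $\alpha$-exponentially weighted estimates elsewhere. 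In short: same strategy, with a continuous-duality-plus-Gronwall execution in place of the paper's discrete-duality-plus-kernel-positivity execution; your version costs extra remainder estimates and worse time dependence, and buys independence from the special (exponential, positive-definite) form of $\beta$.
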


\begin{proof}
Choose $\bphi= e^{2\alpha t}\bzt_1,~\bchi=e^{2\alpha t}\bzt_2$ to obtain
\begin{equation}\label{l2.bzt01}
 \mu\|\hzt\|_1^2+\int_{t_0}^t \beta(t-s) e^{\alpha(t-s)} a(\hzt(s),\hzt)
\le e^{\alpha t}\|\bz_{ht}\|\|\hzt_2\|.
\end{equation}
As in (\ref{l2l2.bxi01})-(\ref{l2l2.bxi.2}), we establish
\begin{equation}\label{l2.bzt2}
e^{-2\alpha t}\int_{t_0}^t \|\hzt\|_1^2ds \le K(t)H^6.
\end{equation}
Now from (\ref{l2.bzt01}), we have
$$ \mu\|\hzt\|_1^2 \le \|\hzt\|_1\int_{t_0}^t \beta(t-s) e^{\alpha(t-s)}
\|\hzt(s)\|_1~ds+\|\bz_{ht}\|\|\hzt_2\|. $$
Use (\ref{l2.bzt2}) to conclude that
\begin{equation}\label{l2.bzt3}
\|\bzt\|_1 \le K(t)H^3.
\end{equation}
In order to obtain optimal $l^{\infty}(\bL^2)$-norm estimate, we would use
Aubin-Nitsche duality argument. For that purpose, we consider the following
Galerkin approximation of steady Stoke problem: let $\bw_h\in$ be the solution of
$$ \mu a(\bv,\bw_h)=(\bv,\hzt_1+\hzt_2),~\bv\in\bJ_h. $$
Writing $\bw_1=P_H\bw_h,~\bw_2=(I-P_H)\bw_h$, we split the above equation as
\begin{equation}\label{dual.prob1}
\left\{\begin{array}{rcl}
\mu a(\bphi,\bw_h) &=& (\bphi,\hzt_1),~\bphi\in\bJ_H, \\
\mu a(\bchi,\bw_h) &=& (\bchi,\hzt_2),~\bchi\in \bJ_h^H.
\end{array}\right.
\end{equation}
It is easy to establish the following regularity result.
\begin{equation}\label{dual.reg1}
\|\bw_h=\bw_1+\bw_2\|_2 \le c\|\hzt_1+\hzt_2\|. 
\end{equation}
Now, put $\bphi=\hzt_1,~\bchi=\hzt_2$ in (\ref{dual.prob1}) and use (\ref{bzt12})
with $\bphi=\bw_1,~\bchi=\bw_2$ to find that
\begin{align*}
\|\hzt\|^2 &=\mu a(\hzt,\bw_h) \\
&= -e^{\alpha t}(\bz_{ht},\bw_2)-\int_{t_0}^t \beta(t-s) e^{\alpha(t-s)} a(\hzt(s),\bw_h).
\end{align*}
Use the fact that $\td_h\bw_h =\hzt_1+\hzt_2$ to obtain
\begin{equation}\label{l2.bzt02}
\|\hzt\|^2+\int_{t_0}^t \beta(t-s)e^{\alpha(t-s)}(\hzt(s),\hzt)~ds =
 -e^{\alpha t}(\bz_{ht},\bw_2).
\end{equation}
We can estimate the right-hand side as in (\ref{new.est}). Integrate and observe that the double integral is non-negative to conclude that	
\begin{equation}\label{l2.bzt4}
e^{-2\alpha t}\int_{t_0}^t \|\hzt\|^2 ds \le K(t)H^8.
\end{equation}
Now from (\ref{l2.bzt02}), we find that
$$ \|\hzt\|^2 \le e^{\alpha t}|(\bz_{ht},\bw_2)|+\|\hzt\|\int_{t_0}^t \beta(t-s) e^{\alpha(t-s)}\|\hzt\|~ds. $$
Use (\ref{new.est}) and (\ref{l2.bzt4}) to establish
$$ \|\bzt\| \le K(t)H^4. $$
For the remaining part, we differentiate (\ref{bzt12}).
\begin{align} \label{bztt12}
\left\{\begin{array}{rl}
\mu a (\bzt_t,\bphi)+\beta(0)a(\bzt,\bphi)+\int_{t_0}^t \beta_t(t-s) a(\bzt(s),\bphi)~ds=0,~\bphi\in\bJ_H, \\
\mu a (\bzt_t,\bchi)+\beta(0)a(\bzt,\bchi)+\int_{t_0}^t \beta_t(t-s)
a(\bzt(s), \bchi)~ds= -(\bz_{htt},\bchi),~\bchi\in\bJ_h^H.
\end{array}\right.
\end{align}
Choose $\bphi= e^{2\alpha t}\bzt_{1,t},~\bchi=e^{2\alpha t}\bzt_{2,t}$ to obtain
\begin{equation*}
\mu e^{2\alpha t}\|\bzt_t\|_1^2 \le \beta(0) e^{\alpha t}\|\hzt\|_1\|\bzt_t\|_1
+\delta e^{\alpha t}\|\bzt_t\|_1\int_{t_0}^t \beta(t-s) e^{\alpha(t-s)} \|\hzt(s)\|_1ds
+e^{\alpha t}\|\bz_{htt}\|\|\hzt_{2,t}\|.
\end{equation*}
Use kickback argument to find that
$$ \|\bzt_t\|_1^2 \le c\|\hzt\|_1^2+ce^{-2\alpha t}\int_{t_0}^t \|\hzt(s)\|_1^2ds
+K(t)H^6. $$
From (\ref{l2.bzt2})-(\ref{l2.bzt3}), we conclude
\begin{equation}\label{l2.bzt5}
\|\bzt_t\|_1 \le K(t)H^3.
\end{equation}
For $\bL^2$-norm estimate, we again make use of Aubin-Nitsche duality argument.
Similar to the problem (\ref{dual.prob1}), we consider
\begin{equation}\label{dual.prob2}
\left\{\begin{array}{rcl}
\mu a(\bphi,\bw_h) &=& (\bphi,e^{\alpha t}\bzt_{1,t}),~\bphi\in\bJ_H, \\
\mu a(\bchi,\bw_h) &=& (\bchi,e^{\alpha t}\bzt_{2,t}),~\bchi\in\bJ_h^H.
\end{array}\right.
\end{equation}
It is easy to establish the following regularity result.
\begin{equation}\label{dual.reg2}
\|\bw_h=\bw_1+\bw_2\|_2 \le ce^{\alpha t}\|\bzt_{1,t}+\bzt_{2,t}\|.
\end{equation}
Now, put $\bphi=e^{\alpha t}\bzt_{1,t},~\bchi=e^{\alpha t}\bzt_{2,t}$ in (\ref{dual.prob2}) and use (\ref{bztt12}) with $\bphi=e^{\alpha t}\bw_1,~\bchi=e^{\alpha t}\bw_2$ to find that
\begin{align*}
e^{2\alpha t}\|\bzt_t\|^2 &=\mu e^{\alpha t} a(\bzt_t,\bw_h) \\
&= -e^{\alpha t}(\bz_{htt},\bw_2)-\beta(0) a(\hzt,\bw_h)+\delta\int_{t_0}^t \beta(t-s) e^{\alpha(t-s)} a(\hzt(s),\bw_h) \\
& \le \Big\{e^{\alpha t}\|\bz_{htt}\|+c\|\hzt\|+c\|\hzt\|\int_{t_0}^t e^{-(\delta-\alpha)(t-s)} ds\Big\}\|\bw_h\|_2.
\end{align*}
Use (\ref{dual.reg2}) and the estimate for $\|\bzt\|$ to establish
$$ \|\bzt_t\| \le K(t)H^4. $$
\end{proof}
Now we are in a position to estimate $L^{\infty}(\bL^2)$-norm of $\bxi$, that is,
of $\bxi_1$ and $\bxi_2$. Using the definitions of $\bxi_i,\bzt_i,~i=1,2$, we write
\begin{equation*}
\left\{\begin{array}{rcl}
\bxi_1 &=& \by_H-\bby= (\by_H-S_H\bu_h)-(\bby-S_H\bu_h) =:\bzt_1-\bth_1, \\
\bxi_2 &=& \bz_h-\bbz= (\bz_h-S_h^H\bu_h)-(\bbz-S_h^H\bu_h) =:\bzt_2-\bth_2.
\end{array}\right.
\end{equation*}
From (\ref{bxi12}) and (\ref{bzt12}), we have
\begin{equation}\label{bth12}
\left\{\begin{array}{rcl}
(\bth_{1,t},\bphi) &+& \mu a(\bth,\bphi)+\int_{t_0}^t \beta(t-s) a(\bth(s),\bphi)~ds
=(\bzt_{1,t},\bphi),~\bphi\in\bJ_H, \\
&&\mu a(\bth,\bchi)+\int_{t_0}^t \beta(t-s) a(\bth(s),\bchi)~ds=0,~\bchi\in\bJ_h^H.
\end{array}\right.
\end{equation}
\begin{lemma}\label{l2.bxi}
Under the assumptions of Lemma 3.2
$$ \|\bxi\| \le K(t)H^4. $$
\end{lemma}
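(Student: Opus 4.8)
The plan is to exploit the fine splitting $\bxi=\bzt-\bth$. Since $\bxi_i=\bzt_i-\bth_i$ for $i=1,2$, we have $\bxi=\bzt-\bth$, and Lemma \ref{l2.bzt} already gives $\|\bzt\|\le K(t)H^4$. Hence it suffices to prove $\|\bth\|\le K(t)H^4$, where $\bth=\bth_1+\bth_2$ solves the linear system (\ref{bth12}) whose only data are the forcing $(\bzt_{1,t},\bphi)$ on $\bJ_H$ and zero on $\bJ_h^H$. By (\ref{l2.bzt1}) this forcing is already of order $H^4$, which is the source of the gain in the convergence rate.

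First I would run the basic energy estimate: test the two equations of (\ref{bth12}) with $\bphi=e^{2\alpha t}\bth_1$ and $\bchi=e^{2\alpha t}\bth_2$ and add. Only the coarse equation carries a time derivative, so the left side produces $\tfrac12\tfrac{d}{dt}\|\hth_1\|^2+\mu\|\hth\|_1^2$ plus the memory term, exactly as in (\ref{l2l2.bxi01})--(\ref{l2l2.bxi.2}). Using (\ref{jhH3}) for coercivity, Lemma \ref{positivity} to drop the double integral, the bound $\|\bzt_{1,t}\|\le\|\bzt_t\|\le K(t)H^4$ from (\ref{l2.bzt1}) on the right side, and the initial value $\bth_1(t_0)=\bzt_1(t_0)$ (so $\|\bth_1(t_0)\|\le\|\bzt(t_0)\|\le KH^4$), I obtain
\[
\|\bth_1(t)\|^2+e^{-2\alpha t}\int_{t_0}^t\big(\|\hth_1\|_1^2+\|\hth_2\|_1^2\big)\,ds\le K(t)H^8.
\]
This already settles the coarse part, $\|\bth_1\|\le K(t)H^4$, and records the weighted $L^2(\bH^1)$ bound $e^{-2\alpha t}\int_{t_0}^t\|\hth\|_1^2\,ds\le K(t)H^8$.

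For the fine part $\bth_2$, the idea is to gain one power of $H$ through (\ref{jhH1}), $\|\bth_2\|\le cH\|\bth_2\|_1$, which reduces matters to a \emph{pointwise-in-time} bound $\|\bth\|_1\le K(t)H^3$. To get this I would test against the time derivatives: the coarse equation with $e^{2\alpha t}\bth_{1,t}$ and the fine equation with $e^{2\alpha t}\bth_{2,t}$, then add. The diagonal terms give $\tfrac\mu2\tfrac{d}{dt}(\|\bth_1\|_1^2+\|\bth_2\|_1^2)$, while the two cross terms combine, via $a(\bth_{1,t},\bth_2)+a(\bth_1,\bth_{2,t})=\tfrac{d}{dt}a(\bth_1,\bth_2)$, into $\mu\tfrac{d}{dt}a(\bth_1,\bth_2)$; together these are precisely $\tfrac\mu2\tfrac{d}{dt}\|\bth\|_1^2$. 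After bounding the forcing by $\|\bth_{1,t}\|^2$ (absorbed) plus $K(t)H^8$, treating the Volterra memory term by integration by parts in $t$ together with the weighted $L^2(\bH^1)$ bound above (a kickback absorbs the resulting $\|\bth(t)\|_1^2$ boundary term), and using the initial bound $\|\bth(t_0)\|_1\le KH^3$ --- which follows from $\|\bth_1(t_0)\|_1=\|\bzt_1(t_0)\|_1\le K(t)H^3$ by (\ref{l2.bzt3}), and from $\bth_2(t_0)$ being determined by $\bth_1(t_0)$ through the fine equation of (\ref{bth12}) at $t=t_0$ (its memory integral vanishing), whence $\|\bth_2(t_0)\|_1\le(1-\rho)\|\bth_1(t_0)\|_1$ by (\ref{jhH2}) --- I expect the pointwise estimate $\|\bth(t)\|_1\le K(t)H^3$. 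Then (\ref{jhH3}) gives $\|\bth_2\|_1\le K(t)H^3$, (\ref{jhH1}) gives $\|\bth_2\|\le K(t)H^4$, and combining with the coarse bound yields $\|\bth\|\le K(t)H^4$, hence the claim.

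The main obstacle is the coarse--fine coupling in the $\bH^1$-level estimate. If one tests only the coarse equation with $\bth_{1,t}$, the stray term $a(\bth_2,\bth_{1,t})$ cannot be turned into a time derivative, and bounding it with the inverse hypothesis (\ref{inv.hypo}) forces an $H^{-1}$ loss and, through Gronwall, a constant of size $e^{cH^{-2}}$, which destroys the estimate. Testing the fine equation simultaneously with $\bth_{2,t}$ is exactly what makes the two cross terms telescope into an exact derivative and removes any need for an inverse inequality; getting this pairing right, and correctly handling the differentiated memory term, is the delicate part of the argument.
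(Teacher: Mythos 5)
Your proposal is correct, and its first half coincides with the paper's: both reduce the claim via $\bxi=\bzt-\bth$ to an $O(H^4)$ bound on $\bth$, and both obtain $\|\bth_1\|\le K(t)H^4$ together with $e^{-2\alpha t}\int_{t_0}^t e^{2\alpha s}\|\bth\|_1^2\,ds\le K(t)H^8$ from the energy identity obtained by testing (\ref{bth12}) with $e^{2\alpha t}\bth_1$, $e^{2\alpha t}\bth_2$ (your treatment of the forcing by Young's inequality and the pointwise bound $\|\bzt_t\|\le K(t)H^4$ is in fact a bit more self-contained than the paper's, which instead bounds $\|\hth_1\|\le\|\hxi\|+\|\hzt\|$ and invokes the $L^2(\bL^2)$ estimates of Lemmas \ref{l2l2.bxi} and \ref{l2.bzt}; you also handle the initial term $\bth_1(t_0)=\bzt_1(t_0)$ explicitly, which the paper silently drops in (\ref{l2.bxi2})). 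Where you genuinely diverge is the fine component. The paper never differentiates the $\bth$-system: it tests only the stationary fine equation with $e^{2\alpha t}\bth_2$, kicks back, bounds the memory term by the integrated $H^1$ estimate, and—crucially—converts the already-proved $L^2$ bound on the coarse part into $\|\bth_1\|_1\le cH^{-1}\|\bth_1\|\le K(t)H^3$ by the inverse inequality (\ref{inv.hypo}) on $\bJ_H$ (a benign use, since the $H^{-1}$ is paid for by the extra power in $\|\bth_1\|\le K(t)H^4$); then $\|\bth_2\|_1\le K(t)H^3$ and (\ref{jhH1}) finish. You instead derive a pointwise bound $\|\bth\|_1\le K(t)H^3$ by the coupled test $(\bth_{1,t},\bth_{2,t})$, exploiting $a(\bth_2,\bth_{1,t})+a(\bth_1,\bth_{2,t})=\frac{d}{dt}a(\bth_1,\bth_2)$ so that $\frac{\mu}{2}\frac{d}{dt}\|\bth\|_1^2$ emerges exactly, integrating the Volterra term by parts with a kickback, and controlling the data at $t=t_0$ via the fine equation with vanishing memory, $\|\bth_2(t_0)\|_1\le(1-\rho)\|\bth_1(t_0)\|_1$ from (\ref{jhH2}). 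This buys independence from the inverse hypothesis (hence from quasi-uniformity of the coarse mesh), at the cost of needing time-differentiability of $\bbz$ and $S_h^H\bu_h$ and a more delicate memory-term computation—both available here, since Lemma \ref{l2.bzt} already controls $\bzt_t$ and the paper itself differentiates (\ref{bzt12}). Two small repairs: passing from $\|\bzt(t_0)\|_1\le K H^3$ (from (\ref{l2.bzt3})) to $\|\bzt_1(t_0)\|_1$ requires (\ref{jhH3}), since the splitting $\bJ_H\oplus\bJ_h^H$ is $\bL^2$-orthogonal but not $a(\cdot,\cdot)$-orthogonal; and your final step $\|\bth_2\|_1\le K(t)H^3$ from $\|\bth\|_1\le K(t)H^3$ likewise goes through (\ref{jhH3}), as you state. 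Neither affects correctness.
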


\begin{proof}
Put $\bphi=e^{2\alpha t}\bth_1,~\bchi=e^{2\alpha t}\bth_2$ in (\ref{bth12}) to find
\begin{equation}\label{l2.bxi01}
\frac{1}{2}\frac{d}{dt}\|\hth_1\|^2-\alpha\|\hth_1\|^2+\mu\|\hth\|_1^2+\int_{t_0}^t
\beta(t-s) e^{\alpha(t-s)} a(\hth(s),\hth)~ds \le e^{\alpha t}\|\bzt_{1,t}\|\|\hth_1\|
\end{equation}
We recall that the spaces $J_H$ and $\bJ_h^H$ are orthogonal in $\bL^2$-inner product.
That is,
$$ \mbox{for}~\bphi\in\bJ_H,~\bchi\in\bJ_h^H,~~(\bphi,\bchi)=0. $$
Hence
$$ \|\hth_1\|^2 \le \|\hth_1\|^2+\|\hth_2\|^2= \|\hth\|^2 \le \|\hxi\|^2+\|\hzt\|^2,
~~\|\bzt_{1,t}\|^2 \le \|\bzt_t\|^2. $$
And
$$ -\alpha\|\hth_1\|^2+\mu\|\hth\|_1^2 =(\mu-\alpha\lambda_1)\|\hth_1\|_1^2
+\mu\|\hth_2\|_1^2. $$
As a result, after integrating (\ref{l2.bxi01}) with respect to time from $t_0$ to
$t$, we obtain
\begin{equation}\label{l2.bxi02}
\|\hth_1\|^2+\int_{t_0}^t \big(\|\hth_1\|_1^2+\|\hth_2\|_1^2\big)~ds \le \Big(\int_{t_0}^t
e^{2\alpha t}\|\bzt_t\|^2ds\Big)^{1/2}\Big(\int_{t_0}^t \big(\|\hxi\|^2
+\|\hzt\|^2\big)~ds\Big)^{1/2}.
\end{equation}
As usual we have dropped the resulting double integral as it is non-negative.
We now use Lemmas \ref{l2l2.bxi} and \ref{l2.bzt} to conclude from (\ref{l2.bxi02}) that
\begin{equation}\label{l2.bxi2}
\|\hth_1\|^2+e^{-2\alpha t}\int_{t_0}^t e^{2\alpha s}\big(\|\bth_1\|_1^2 +\|\bth_2\|_1^2\big)~ds \le K(t)H^8.
\end{equation}
We again choose $\bchi=e^{2\alpha t}\bth_2$ in (\ref{bth12}) to find
\begin{align*}
\mu \|\hth_2\|_1^2= -\mu a(\hth_1,\hth_2)-\int_{t_0}^t \beta(t-s) e^{\alpha(t-s)}
a(\hth(s),\hth_2)~ds
\end{align*}
Using kickback argument, we obtain
\begin{align*}
\|\hth_2\|_1 \le \|\hth_1\|_1+c\Big(\int_{t_0}^t \big(\|\hth_1\|^2
+\|\hth_2\|^2\big)~ds\Big)^{1/2}.
\end{align*}
Since $\bth_1\in\bJ_H$, we use inverse inequality and (\ref{l2.bxi2}) to note that
$$ \|\bth_1\|_1 \le cH^{-1}\|\bth\| \le K(t)H^3. $$
Hence, we conclude that
$$ \|\bth_2\|_1 \le K(t)H^3. $$
Now use (\ref{jhH1}) to see that
\begin{equation}\label{l2.bxi3}
\|\bth_2\| \le K(t)H^4.
\end{equation}
Combining (\ref{l2.bxi2})-(\ref{l2.bxi3}), we establish
$$ \|\bth\| \le K(t)H^4. $$
Use triangle inequality and the estimates of $\bzt$ and $\bth$ to complete the proof.
\end{proof}
\noindent
We are now left with the estimate of $\bta$, the error due to the nonlinearity.
\begin{lemma}\label{l2.err}
Under the assumptions of Lemma 3.2 and that $H$ is small enough to satisfy (\ref{H.restrict}) and
$$ \mu\rho-2H\|\bbu\|_2 \ge 0,~~\mu-H(\|\bbu\|_2+\|\by^H\|_2) \ge 0, $$
we have
$$ \|(\bu_h-\bu^h)(t)\| \le K(t)H^4. $$
\end{lemma}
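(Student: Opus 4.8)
The plan is to observe that, since $\e=\bxi-\bta$ and Lemma~\ref{l2.bxi} already supplies $\|\bxi(t)\|\le K(t)H^4$, the triangle inequality reduces everything to proving $\|\bta(t)\|\le K(t)H^4$. I would extract this from an energy estimate for the system (\ref{bta12}) followed by a stationary recovery of the fine component, exactly mirroring the route used for $\bxi$ and $\bzt$. First I would test (\ref{bta12}) with $\bphi=e^{2\alpha t}\bta_1$ and $\bchi=e^{2\alpha t}\bta_2$, add the equations, and pass to $\hta=e^{\alpha t}\bta$. Since the time derivative sits only on the coarse component, the left side yields $\tfrac12\frac{d}{dt}\|\hta_1\|^2-\alpha\|\hta_1\|^2+\mu\|\hta\|_1^2$ plus a non-negative memory double integral that may be dropped; by (\ref{jhH3}) the indefinite part is bounded below by $(\mu\rho-\alpha/\lambda_1)\|\hta_1\|_1^2+\mu\rho\|\hta_2\|_1^2$, giving coercive control of $\|\hta\|_1^2$.

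The heart of the matter is the right side $e^{2\alpha t}\bigl(b(\bu_h,\bu_h,\bta)-b(\bu^h,\bu^h,\bta)\bigr)$. Writing $\bu_h=\bbu+\bxi$ and $\bu^h=\bbu+\bta$ and expanding bilinearly, the antisymmetry $b(\bw,\bv,\bv)=0$ kills $b(\bbu,\bta,\bta)$ and $b(\bta,\bta,\bta)$, so the difference collapses to
\[
b(\bbu,\bxi,\bta)+b(\bxi,\bbu,\bta)+b(\bxi,\bxi,\bta)-b(\bta,\bbu,\bta).
\]
The first three terms each carry the factor $\bxi$; using Lemma~\ref{nonlin}, the regularity $\|\bbu\|_2\le K$, Young's inequality, and $\|\bxi\|\le K(t)H^4$, they contribute a fraction of $\mu\rho\|\hta\|_1^2$ (absorbed), a Gronwall term $c\|\hta_1\|^2$, and a forcing term $K(t)H^8e^{2\alpha t}$, the squared factor $\|\bxi\|^2$ being precisely what delivers the full order $H^8$. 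The dangerous term is $b(\bta,\bbu,\bta)$, which does not vanish. I would rewrite it as $-b(\bta,\bta,\bbu)$, estimate $|b(\bta,\bta,\bbu)|\le K\|\bbu\|_2\|\bta\|\|\nabla\bta\|$, and then use the $\bL^2$ super-approximation (\ref{jhH1}) on the fine space, $\|\bta\|\le\|\bta_1\|+cH\|\bta\|_1$, to split it into $cH\|\bbu\|_2\|\hta\|_1^2$ and $K\|\bbu\|_2\|\hta_1\|\,\|\hta\|_1$. The first is absorbed into the viscous term exactly under $\mu\rho-2H\|\bbu\|_2\ge0$, and the second produces a further diffusion piece plus a Gronwall term. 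This is the step I expect to be the main obstacle: the diagonal nonlinear term in $\bta$ is the very mechanism by which the nonlinearity could otherwise cap the order of convergence, and it is tamed only because the $\bL^2$-based splitting extracts an extra power of $H$ under the smallness assumption (\ref{H.restrict}).

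Collecting these bounds gives $\frac{d}{dt}\|\hta_1\|^2+c_1\|\hta\|_1^2\le c_2\|\hta_1\|^2+K(t)H^8e^{2\alpha t}$. Since $\bta_1(t_0)=\by^H(t_0)-\bby(t_0)=0$, integrating from $t_0$ to $t$ and applying Gronwall yields $\|\bta_1(t)\|^2+e^{-2\alpha t}\int_{t_0}^t e^{2\alpha s}\|\bta\|_1^2\,ds\le K(t)H^8$. To control the fine part pointwise, I would return to the stationary second equation of (\ref{bta12}) at fixed $t$ with $\bchi=\bta_2$: bounding $a(\bta_1,\bta_2)$ by (\ref{jhH2}), treating the nonlinear right side again through $\|\bta_2\|\le cH\|\bta_2\|_1$ (which converts the nonlinear contributions into $H(\|\bbu\|_2+\|\by^H\|_2)\|\bta_2\|_1^2$, absorbed under the second hypothesis $\mu-H(\|\bbu\|_2+\|\by^H\|_2)\ge0$), and kicking back, I obtain $\|\bta_2\|_1\le c\|\bta_1\|_1$ up to terms already known to be of order $H^4$ or better. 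Finally, the inverse inequality (\ref{inv.hypo}) on the coarse space gives $\|\bta_1\|_1\le cH^{-1}\|\bta_1\|\le K(t)H^3$, hence $\|\bta_2\|_1\le K(t)H^3$ and, by (\ref{jhH1}), $\|\bta_2\|\le cH\|\bta_2\|_1\le K(t)H^4$. Combining $\|\bta_1\|,\|\bta_2\|\le K(t)H^4$ by $\bL^2$-orthogonality, and then the triangle inequality $\|\e\|\le\|\bxi\|+\|\bta\|$ with Lemma~\ref{l2.bxi}, completes the proof.
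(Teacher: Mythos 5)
Your proposal is correct and follows essentially the same route as the paper: testing (\ref{bta12}) with $e^{2\alpha t}\bta_1$, $e^{2\alpha t}\bta_2$, collapsing the nonlinear difference to terms carrying $\bxi$ plus the diagonal term $b(\bta,\bbu,\bta)$, absorbing its fine-space part via $\|\bta_2\|\le cH\|\bta_2\|_1$ under $\mu\rho-2H\|\bbu\|_2\ge 0$, applying Gronwall and the inverse inequality to get $\|\bta_1\|\le K(t)H^4$, $\|\bta_1\|_1\le K(t)H^3$, then recovering $\bta_2$ from the stationary second equation by kickback under $\mu-H(\|\bbu\|_2+\|\by^H\|_2)\ge 0$ and (\ref{jhH1}), and finishing with the triangle inequality. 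The only cosmetic differences are your grouping of the $\bxi$-terms (the paper keeps $b(\bu_h,\bxi,\bta)$ together, which avoids estimating $b(\bxi,\bxi,\bta)$ in isolation) and one stray attribution of the absorption to (\ref{H.restrict}) rather than to $\mu\rho-2H\|\bbu\|_2\ge 0$, which you state correctly elsewhere.
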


\begin{proof}
We choose $\bphi=e^{2\alpha t}\bta_1,~\bchi=e^{2\alpha t}\bta_2$ in (\ref{bta12}).
\begin{align}\label{l2.err01}
\frac{1}{2}\frac{d}{dt}\|\hta_1\|^2+\mu \|\hta\|_1^2+\int_{t_0}^t \beta(t-s) e^{\alpha
(t-s)} a(\hta(s),\hta)~ds = e^{2\alpha t}\Lambda_h(\bta_1,\bta_2),
\end{align}
where
$$ \Lambda_h(\bta_1,\bta_2)=\Lambda_{h,1}(\bta_1)+\Lambda_{h,2}(\bta_2), $$
and
\begin{align*}
\Lambda_{h,1}(\bta_1) &=b(\bu_h,\bu_h,\bta_1)-b(\bu^h,\bu^h,\bta_1) \\
&= b(\bxi-\bta,\bu_h,\bta_1)+b(\bu_h,\bxi-\bta,\bta_1)-b(\bxi-\bta,\bxi-\bta,\bta_1) \\
\Lambda_{h,2}(\bta_2) &=b(\bu_h,\bu_h,\bta_2)-b(\bu^h,\bu^h,\bta_2) \\
&= b(\bxi-\bta,\bu_h,\bta_2)+b(\bu_h,\bxi-\bta,\bta_2)-b(\bxi-\bta,\bxi-\bta,\bta_2).
\end{align*}
Therefore
\begin{align}\label{l2.err02}
\Lambda_h(\bta_1,\bta_2)= b(\bxi-\bta,\bbu,\bta)+b(\bu_h,\bxi,\bta).
\end{align}
\noindent We estimate the nonlinear terms as follows:
\begin{align*}
b(\bu_h,\bxi,\bta)&+b(\bxi-\bta_1,\bbu,\bta) \le \big\{\|\bxi\|\|\bu_h\|_2+
(\|\bxi\|+\|\bta_1\|)\|\bbu\|_2\big\}\|\bta\|_1 , \\
b(\bta_2,\bbu,\bta) &\le \|\bta_2\|_1\|\bbu\|_2(\|\bta_1\|+\|\bta_2\|) \le
\|\bta_2\|_1\|\bbu\|_2\|\bta_1\|+H\|\bbu\|_2\|\bta_2\|_1^2.
\end{align*}
Therefore, for $\epsilon,\epsilon_1>0$,
\begin{align*}
\Lambda_h(\bta_1,\bta_2) &\le \epsilon\|\bta\|_1^2+\epsilon_1\|\bta_2\|_1^2+c(\epsilon) (\|\bu_h\|_2^2+\|\bbu\|_2^2)\|\bxi\|^2 \\
& ~~+c(\epsilon,\epsilon_1)\|\bbu\|_2^2\|\bta_1\|^2+H\|\bbu\|_2\|\bta_2\|_1^2.
\end{align*}
Now, from (\ref{l2.err01}), we find that
\begin{align}\label{l2.err03}
\frac{d}{dt}\|\hta_1\|^2+2\mu\rho &(\|\hta_1\|_1^2+\|\hta_2\|_1^2)+2\int_{t_0}^t \beta(t-s) e^{\alpha(t-s)} a(\hta(s),\hta)~ds \le 2\epsilon\|\hta\|_1^2+2\epsilon_1\|\hta_2\|_1^2
\nonumber \\
&+c(\epsilon) (\|\bu_h\|_2^2+\|\bbu\|_2^2)\|\hxi\|^2+c(\epsilon,\epsilon_1) \|\bbu\|_2^2\|\hta_1\|^2+2H\|\bbu\|_2\|\hta_2\|_1^2.
\end{align}
We choose $\epsilon=\epsilon_1=\mu\rho$ and assume that $H$ small enough such that
$$ \mu\rho-2H\|\bbu\|_2 \ge 0 $$
to obtain after integration
$$ \|\bta_1\|^2+e^{-2\alpha t}\int_{t_0}^t ((\|\hta_1\|_1^2+\|\hta_2\|_1^2))~ds \le
K(t)H^8+K\int_{t_0}^t \|\bta_1(s)\|^2ds. $$
Apply Gronwall's lemma to establish $L^{\infty}(\bL^2)$-norm estimate of $\bta_1$.
We note that
$$ \|\bta_1\|_1 \le cH^{-1}\|\bta_1\| \le K(t)H^3. $$
For $\bta_2$, we again put $\bchi=e^{2\alpha t}\bta_2$ in (\ref{bta12}).
\begin{align}\label{l2.err04}
\mu \|\hta_2\|_1^2= +e^{2\alpha t}\Lambda_{h,2}(\bta_2)-\mu a(\hta_1,\hta_2)
-\int_{t_0}^t \beta(t-s) e^{\alpha (t-s)} a(\hta(s),\hta_2)~ds.
\end{align}
Recall that
$$ \Lambda_{h,2}(\bta_2)= b(\bxi-\bta,\bbu,\bta_2)+b(\bu_h,\bxi-\bta_1,\bta_2) +b(\bxi-\bta,\bta_1,\bta_2). $$
And
\begin{align*}
b(\bxi-\bta_1,\bbu,\bta_2)+b(\bu_h,\bxi-\bta_1,\bta_2) \le (\|\bxi\|+\|\bta_1\|)
(\|\bbu\|_2+\|\bu_h\|_2)\|\bta_2\|_1 \\
b(\bxi-\bta_1,\bta_1,\bta_2) \le (\|\bxi\|_1+\|\bta_1\|_1)\|\bta_1\|_1\|\bta_2\|_1 \\
b(-\bta_2,\bbu,\bta_2)+b(-\bta_2,\bta_1,\bta_2) \le H(\|\bbu\|_2+\|\bta_1\|_2)\|\bta_2\|_1^2.
\end{align*}
Note that $\|\bta_1\| \le \|\by^H\|+\|\bby\|.$ And under the assumption
$$ \mu-H(\|\bbu\|_2+\|\by^H\|_2) \ge 0 $$
we easily obtain that
$$ \|\bta_2\|_1 \le K(t)H^3 $$
and hence
$$ \|\bta_2\| \le cH\|\bta_2\|_1 \le K(t)H^4. $$
Now, triangle inequality completes the proof.
\end{proof}

\subsection{NLG II}

In this subsection, we deal with the error estimate for NLG II.
As earlier, we split the error in two, that is, $\e=\bu_h-\bu^h=\bxi-\bta$. The equations and hence the estimates regarding $\bxi$ remain same and are optimal in nature. The equation in $\bta$ reads as follows:
\begin{align} \label{bta12a}
\left\{\begin{array}{rl}
(\bta_{1,t},\bphi)+\mu a (\bta,\bphi)+ \int_{t_0}^t \beta(t-s) a(\bta(s), \bphi)~ds
=& b(\bu_h,\bu_h,\bphi)-b(\bu^h,\bu^h,\bphi) \\
\mu a (\bta,\bchi)+ \int_{t_0}^t \beta(t-s) a(\bta(s), \bchi)~ds=& b(\bu_h,\bu_h,\bchi)
-b(\bu^h,\bu^h,\bchi) \\
&+b(\bz^h,\bz^h,\bchi),
\end{array}\right.
\end{align}
for $\bphi\in\bJ_H$ and $\bchi\in\bJ_h^H.$
\begin{lemma}\label{l2.err1}
Under the assumptions of Lemma \ref{l2.err}, we have
$$ \|(\bu_h-\bu^h)(t)\| \le K(t)H^3. $$
\end{lemma}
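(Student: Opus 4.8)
The plan is to follow the proof of Lemma \ref{l2.err} essentially verbatim, since the systems governing $\bxi$ are identical for NLG~I and NLG~II and hence the optimal bound $\|\bxi\| \le K(t)H^4$ of Lemma \ref{l2.bxi} is available unchanged; the only structural difference is the extra term $b(\bz^h,\bz^h,\bchi)$ on the right-hand side of the second equation of (\ref{bta12a}). First I would choose $\bphi=e^{2\alpha t}\bta_1$, $\bchi=e^{2\alpha t}\bta_2$ in (\ref{bta12a}) and add the two equations. Exactly as in (\ref{l2.err01}), only $\bta_1$ carries a time derivative, and one is led to the energy identity
$$\frac{1}{2}\frac{d}{dt}\|\hta_1\|^2+\mu\|\hta\|_1^2+\int_{t_0}^t\beta(t-s)e^{\alpha(t-s)}a(\hta(s),\hta)~ds=e^{2\alpha t}\big(\Lambda_h(\bta_1,\bta_2)+b(\bz^h,\bz^h,\bta_2)\big),$$
which is precisely (\ref{l2.err01}) with the single additional contribution $e^{2\alpha t}b(\bz^h,\bz^h,\bta_2)$. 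Since $\Lambda_h$ is treated word for word as in Lemma \ref{l2.err}, the entire argument reduces to controlling this new quadratic term and tracking how it degrades the rate.

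For the new term I would exploit the fact that $\bz^h$ inherits the smallness of $\bz_h$. Writing $\bz^h=\bz_h-\e_2$ with $\e_2=\bz_h-\bz^h\in\bJ_h^H$, using $\|\e_2\|\le cH\|\nabla\e_2\|\le cH\|\nabla\e\|$ from (\ref{jhH1}) together with the (known) energy-norm error estimate $\|\nabla\e\|\le K(t)H^2$ and Lemma \ref{bzh.est}, one gets the a priori smallness $\|\bz^h\|\le K(t)H^2$ and $\|\nabla\bz^h\|\le K(t)H$. A Ladyzhenskaya-type bound for $b$ and (\ref{jhH1}) then yield the decisive estimate
$$|b(\bz^h,\bz^h,\bta_2)|\le c\,\|\bz^h\|^{1/2}\|\nabla\bz^h\|^{3/2}\|\bta_2\|^{1/2}\|\nabla\bta_2\|^{1/2}\le c\,H^3\|\bta_2\|_1\le\epsilon\|\bta_2\|_1^2+c(\epsilon)H^6.$$
This is the heart of the matter: the quadratic remainder that was dropped when passing from NLG~I to NLG~II contributes at order $H^6$ to the squared energy, one full power of $H^2$ worse than the $H^8$ produced by $\Lambda_h$, and it is exactly this that caps the convergence (cf.\ the remark in the introduction that the nonlinear terms limit the order).

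Absorbing $\epsilon\|\bta_2\|_1^2$ into $\mu\|\hta\|_1^2$ by (\ref{jhH3}), taking $\epsilon=\epsilon_1=\mu\rho$, and imposing the smallness of $H$ in (\ref{H.restrict}) and the hypotheses of Lemma \ref{l2.err}, integration in time gives
$$\|\bta_1\|^2+e^{-2\alpha t}\int_{t_0}^t\big(\|\hta_1\|_1^2+\|\hta_2\|_1^2\big)~ds\le K(t)H^6+K\int_{t_0}^t\|\bta_1(s)\|^2~ds,$$
so Gronwall's lemma yields $\|\bta_1\|\le K(t)H^3$ and hence $\|\bta_1\|_1\le cH^{-1}\|\bta_1\|\le K(t)H^2$ by (\ref{inv.hypo}). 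I would then return to the second equation of (\ref{bta12a}) with $\bchi=e^{2\alpha t}\bta_2$, estimate $\Lambda_{h,2}$ as in Lemma \ref{l2.err} (now with $\|\bta_1\|_1\sim H^2$) and the new term again by $cH^3\|\bta_2\|_1$, and use the kickback argument to get $\|\bta_2\|_1\le K(t)H^2$, whence $\|\bta_2\|\le cH\|\bta_2\|_1\le K(t)H^3$ by (\ref{jhH1}). Combining $\|\bta\|\le K(t)H^3$ with $\|\bxi\|\le K(t)H^4$ through the triangle inequality closes the estimate $\|(\bu_h-\bu^h)(t)\|\le K(t)H^3$.

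I expect the main obstacle to be the treatment of $b(\bz^h,\bz^h,\bta_2)$, and specifically securing the a priori smallness $\|\bz^h\|\le K(t)H^2$, $\|\nabla\bz^h\|\le K(t)H$ in a non-circular way: a crude use of the merely $O(1)$ bounds of Lemma \ref{apr} would only produce $O(H)$, so one must route the bound through $\bz^h=\bz_h-\e_2$ and the energy-norm estimate for $\e$ (or, equivalently, repeat the analysis behind Lemma \ref{bzh.est} for $\bu^h$). Once this smallness is in hand the quadratic term is harmless, but it rigidly fixes the attainable order at $H^3$ rather than $H^4$, which is the whole point of contrasting NLG~II with NLG~I.
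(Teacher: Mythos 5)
Your overall skeleton coincides with the paper's proof: the same test functions $\bphi=e^{2\alpha t}\bta_1$, $\bchi=e^{2\alpha t}\bta_2$ in (\ref{bta12a}), the same energy identity (\ref{l2.err101}), the same reduction to the single extra term $b(\bz^h,\bz^h,\bta_2)$, the same $K(t)H^6$ contribution from it, Gronwall for $\bta_1$, the second-equation kickback for $\bta_2$, and the triangle inequality against $\|\bxi\|\le K(t)H^4$. The gap is exactly where you yourself locate it, and your proposal does not resolve it: the a priori smallness $\|\bz^h\|\le K(t)H^2$, $\|\nabla\bz^h\|\le K(t)H$ is not available non-circularly at this point. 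Writing $\bz^h=\bz_h-\e_2$, the known bounds cover only $\bz_h$ (Lemma \ref{bzh.est}) and $\bxi_2$; since $\e_2=\bxi_2-\bta_2$, bounding $\|\e_2\|_1$ is, up to the already-controlled $\bxi_2$, the same as bounding $\|\bta_2\|_1$ --- the very quantity the lemma is establishing. Your first fallback, the ``known'' energy-norm estimate $\|\nabla\e\|\le K(t)H^2$, is proved in \cite{AM94} only for Navier--Stokes; for the present Oldroyd system the paper nowhere establishes it before this lemma (indeed it emerges as a by-product of this very proof, via $\|\bta_1\|_1,\|\bta_2\|_1\le K(t)H^2$), so invoking it here is circular within the paper's development. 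Your second fallback, repeating the analysis of Lemma \ref{bzh.est} for $\bu^h$, would require error estimates for $\bu-\bu^h$, again the object at stake.

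The paper closes this hole with a purely algebraic device that your proposal is missing: substitute $\bz^h=\bz_h+\bta_2-\bxi_2$ into the trilinear form and use antisymmetry, $b(\bv,\bw,\bw)=0$, so that $b(\bz^h,\bz^h,\bta_2)=b(\bz_h+\bta_2-\bxi_2,\bz_h-\bxi_2,\bta_2)$ and the only quadratic-in-$\bta_2$ piece is $b(\bta_2,\bz_h-\bxi_2,\bta_2)=b(\bta_2,\bbz,\bta_2)\le cH\|\bbz\|_2\|\bta_2\|_1^2$, which is absorbed into $\mu\|\hta\|_1^2$ for $H$ small (consistent with the hypotheses inherited from Lemma \ref{l2.err}); the remaining pieces involve only $\bz_h$ and $\bxi_2$, and with $\|\bz_h\|_1\le K(t)H$ they give the dominant contribution $K(t)H^2\|\bz_h\|_1^4\le K(t)H^6$, matching the size $cH^3\|\bta_2\|_1$ you predicted. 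So your diagnosis of the mechanism (the dropped term $b(\bz_h,\bz_h,\bchi)$ caps the rate at $H^3$) and the final accounting are both correct, but to make the proof complete you must either adopt this substitution-plus-antisymmetry step, or first prove an independent $H^1$ error estimate for NLG~II for the Oldroyd system (e.g.\ by a separate energy or bootstrap argument), which you have not done.
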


\begin{proof}
We choose $\bphi=e^{2\alpha t}\bta_1,~\bchi=e^{2\alpha t}\bta_2$ in (\ref{bta12a}).
\begin{align}\label{l2.err101}
\frac{1}{2}\frac{d}{dt}\|\hta_1\|^2+\mu \|\hta\|_1^2+\int_{t_0}^t \beta(t-s) e^{\alpha
(t-s)} a(\hta(s),\hta)~ds = e^{2\alpha t}\big\{\Lambda_h(\bta_1,\bta_2)
+b(\bz^h,\bz^h,\bta_2)\big\},
\end{align}
Since $\bz^h=\bz_h+\bta_2-\bxi_2,$ we have
$$ b(\bz^h,\bz^h,\bta_2)=b(\bz_h+\bta_2-\bxi_2,\bz_h-\bxi_2,\bta_2). $$
Now
\begin{align*}
b(\bxi_2,\bz_h-\bxi_2,\bta_2) &\le \|\bxi_2\|_1(\|\bz_h\|_1+\|\bxi_2\|_1)\|\bta_2\|_1 \\
b(\bta_2,\bz_h-\bxi_2,\bta_2) &=b(\bta_2,\bbz,\bta_2) \le cH\|\bbz\|_2\|\bta_2\|_1^2 \\
b(\bz_h,\bz_h-\bxi_2,\bta_2) & \le  \|\bz_h\|^{1/2}\|\bz_h\|_1^{1/2}(\|\bz_h\|_1
+\|\bxi_2\|_1)\|\bta_2\|^{1/2}\|\bta_2\|_1^{1/2} \\
&+\|\bz_h\|^{1/2}\|\bz_h\|_1^{1/2}\|\bta_2\|_1(\|\bz_h\|^{1/2}\|\bz_h\|_1^{1/2}
+\|\bxi_2\|_1) \\
\le cH\|\bz_h\|_1 & (\|\bz_h\|_1+\|\bxi_2\|_1)\|\bta_2\|_1+cH^{1/2}\|\bz_h\|_1\|\bta_2\|_1.
cH^{1/2}(\|\bz_h\|_1+\|\bxi_2\|_1) \\
\le cH\|\bz_h\|_1^2 & \|\bta_2\|_1+cH\|\bz_h\|_1\|\bxi_2\|_1\|\bta_2\|_1.
\end{align*}
As earlier, for small $H$, we obtain
$$ \|\bta_1\|^2+e^{-2\alpha t}\int_{t_0}^t ((\|\hta_1\|_1^2+\|\hta_2\|_1^2))~ds \le
K(t)H^8+K(t)H^2\|\bz_h\|_1^4+K\int_{t_0}^t \|\bta_1(s)\|^2ds, $$
which results in
$$ \|\bta_1\|^2+e^{-2\alpha t}\int_{t_0}^t ((\|\hta_1\|_1^2+\|\hta_2\|_1^2))~ds \le
K(t)H^6. $$
That is
$$ \|\bta_1\| \le K(t)H^3,~~\|\bta_1\|_1 \le K(t)H^2. $$
As in the previous section, using only the second equation of (\ref{bta12a}) we can easily conclude that
$$ \|\bta_2\| \le K(t)H^3,~~\|\bta_2\|_1 \le K(t)H^2. $$
\end{proof}

\noindent
\begin{remark}\label{rmk}
The analysis reveals that the decrease in the order of convergence is due to the presence of $b(\bz_h,\bz_h,\bchi)$ in the error equation. So, whereas in NLG I, we keep the nonlinearity
in both the equations, in NLG II, the second equation is made linear in $\bz^h$ by dropping
the term $b(\bz_h,\bz_h,\bchi)$ and which in turn appears in the error equation and is responsible for bringing down the rate of convergence in the above analysis.
\end{remark}

\subsection{Improved Error Estimate}

In this section, we try to improve the rate of convergence, using the technique of
\cite{MX95}. The same technique is applicable for NSE also, but it is not
straightforward, as the estimate of the function $f(\bu)$ in their semi-linear problem
does not always hold for our $f(\bu)$ (which involves the non-linear term) and we have to be careful in order to obtain similar results.

\noindent
First, we note that the second equation of (\ref{dwfJH1}) can be written as
\begin{equation}\label{relbyz} 
\bz^h=\Phi(\by^H),
\end{equation}
where $\Phi:\bJ_H\to \bJ_h^H$. Using this, we can write the equation in $\Phi(\by_H)$,
for $\bchi\in\bJ_h^H$.
\begin{align}\label{eqn.phi1}
\mu a (\by_H+\Phi(\by_H),\bchi) &+ b(\by_H+\Phi(\by_H),\by_H,\bchi)+b(\by_H,\Phi(\by_H), \bchi) \nonumber \\
&+ \int_{t_0}^t \beta(t-s) a((\by_H+\Phi(\by_H))(s),\bchi)~ds =(\f, \bchi).
\end{align}
\begin{lemma}\label{err.phi} 
Under the assumptions of Lemma 3.2 and that $H$ is small enough to satisfy
$$ \frac{\mu}{2}-cH\|\bu_h\|_1 \ge 0, $$
we have
\begin{equation}\label{ep1}
\|\bz_h-\Phi(\by_H)\|+H\|\bz_h-\Phi(\by_H)\|_1 \le K(t)H^4.
\end{equation}
\end{lemma}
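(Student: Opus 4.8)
The plan is to derive and test an error equation for $\bw:=\bz_h-\Phi(\by_H)\in\bJ_h^H$. First I would subtract the defining relation (\ref{eqn.phi1}) for $\Phi(\by_H)$ from the second (fine-space) equation of the projected classical system (\ref{dwfjH}), both tested against $\bchi\in\bJ_h^H$. The linear Stokes and memory parts combine into $\mu a(\bw,\bchi)+\int_{t_0}^t\beta(t-s)a(\bw(s),\bchi)\,ds$, while on the left there survives the genuinely parabolic term $(\bz_{ht},\bchi)$, which has no counterpart in the steady equation (\ref{eqn.phi1}); I move it to the right. The crucial step is the algebra of the trilinear terms: writing $\Phi(\by_H)=\bz_h-\bw$, hence $\by_H+\Phi(\by_H)=\bu_h-\bw$, and using $\by_H-\bu_h=-\bz_h$, the cross terms telescope and the nonlinear residual collapses to $-b(\bz_h,\bz_h,\bchi)-b(\bw,\by_H,\bchi)-b(\by_H,\bw,\bchi)$. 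The error equation then reads
\begin{equation*}
\mu a(\bw,\bchi)+\int_{t_0}^t\beta(t-s)a(\bw(s),\bchi)\,ds=-(\bz_{ht},\bchi)-b(\bz_h,\bz_h,\bchi)-b(\bw,\by_H,\bchi)-b(\by_H,\bw,\bchi).
\end{equation*}

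Next I would take $\bchi=\bw$. The antisymmetry of $b$ kills $b(\by_H,\bw,\bw)=0$, leaving
\begin{equation*}
\mu\|\bw\|_1^2=-\int_{t_0}^t\beta(t-s)a(\bw(s),\bw)\,ds-(\bz_{ht},\bw)-b(\bz_h,\bz_h,\bw)-b(\bw,\by_H,\bw).
\end{equation*}
Since this relation is pointwise (steady) in $t$, rather than invoking Lemma \ref{positivity} I would bound the Volterra term directly by Cauchy--Schwarz to create a kernel term ready for Gronwall, namely it is at most $\frac{\mu}{8}\|\bw\|_1^2+c\int_{t_0}^t\beta(t-s)\|\bw(s)\|_1^2\,ds$. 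For the two driving terms I would use the sharp bounds $\|\bz_{ht}\|\le K(t)H^2$ and $\|\bz_h\|+H\|\bz_h\|_1\le K(t)H^2$ of Lemma \ref{bzh.est} together with $\|\bw\|\le cH\|\bw\|_1$ from (\ref{jhH1}): this gives $|(\bz_{ht},\bw)|\le K(t)H^3\|\bw\|_1$, and, via the Ladyzhenskaya-type estimate (\ref{nonlin1}) applied to both pieces of $b(\bz_h,\bz_h,\bw)$, a bound of the form $K\|\bz_h\|^{1/2}\|\bz_h\|_1^{3/2}\|\bw\|^{1/2}\|\bw\|_1^{1/2}\le K(t)H^3\|\bw\|_1$. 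Each of these I would absorb by Young's inequality, contributing $O(H^6)$ to the data side.

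The hard part is $b(\bw,\by_H,\bw)$, exactly the sort of nonlinearity the introduction warns cannot be treated as in \cite{MX95}. Here I would estimate $|b(\bw,\by_H,\bw)|\le c\|\bw\|\,\|\bw\|_1\|\by_H\|_{\infty}$ and control $\|\by_H\|_{\infty}$ by the finite-dimensional Brezis--Gallouet inequality $\|\by_H\|_{\infty}\le cL_H\|\by_H\|_1$ of \cite{LH10}; combining with $\|\bw\|\le cH\|\bw\|_1$ and the $H^1$-stability $\|\by_H\|_1\le c\|\bu_h\|_1$ (cf.\ (\ref{ph1})) yields $|b(\bw,\by_H,\bw)|\le cH\|\bu_h\|_1\|\bw\|_1^2$, the logarithmic factor being folded into $c$. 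This is precisely the term forcing the smallness hypothesis $\frac{\mu}{2}-cH\|\bu_h\|_1\ge0$, under which it can be absorbed into $\mu\|\bw\|_1^2$. Collecting everything gives $\frac{\mu}{8}\|\bw(t)\|_1^2\le K(t)H^6+c\int_{t_0}^t\beta(t-s)\|\bw(s)\|_1^2\,ds$, and, $\beta$ being bounded, a Gronwall argument (no initial datum being required since the equation is steady in $\bw$) produces $\|\bw(t)\|_1\le K(t)H^3$; then (\ref{jhH1}) upgrades this to $\|\bw(t)\|\le cH\|\bw(t)\|_1\le K(t)H^4$, which is the claim. The two places needing genuine care are the trilinear cancellation of the first paragraph and confirming that $b(\bz_h,\bz_h,\bw)$ really attains order $H^3$ rather than a lower power, both of which rely on the optimal $\bz_h$-estimates of Lemma \ref{bzh.est}.
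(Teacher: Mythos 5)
Your proposal is correct and is essentially the paper's own argument: you derive the identical error equation for $\Phi_{\e}:=\bz_h-\Phi(\by_H)$ by subtracting (\ref{eqn.phi1}) from the second equation of (\ref{dwfjH}) (your telescoped right-hand side $-(\bz_{ht},\bchi)-b(\bz_h,\bz_h,\bchi)-b(\bw,\by_H,\bchi)-b(\by_H,\bw,\bchi)$ coincides, after testing with $\bchi=\bw$ and using antisymmetry, with the paper's $-(\bz_{h,t},\Phi_{\e})-b(\Phi_{\e},\bu_h,\Phi_{\e})-b(\Phi(\by_H),\Phi(\by_H),\Phi_{\e})$), you obtain the same $K(t)H^3\|\bw\|_1$ bounds on the driving terms from Lemma \ref{bzh.est} and (\ref{jhH1}), absorb the quadratic term under the same smallness hypothesis, and finish with $\|\bw\|\le cH\|\bw\|_1$. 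Two local deviations are worth recording. First, for the memory term the paper integrates in time and discards the resulting double integral by the positivity Lemma \ref{positivity}, obtaining $\int_{t_0}^{t}\|\Phi_{\e}\|_1^2\,ds\le K(t)H^6$ before returning to the pointwise identity to bound the Volterra integral; your Cauchy--Schwarz/Young treatment followed by Gronwall is a legitimate substitute (the exponential factor is absorbed into $K(t)$) and dispenses with the positivity lemma altogether. Second, and this is your one imprecision: to absorb $b(\bw,\by_H,\bw)$ you invoke the Brezis--Gallouet bound $\|\by_H\|_{\infty}\le cL_H\|\by_H\|_1$ and then declare the logarithmic factor ``folded into $c$''. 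But $L_H\sim|\log h|^{1/2}$ is not bounded as $h,H\to 0$, so with your estimate the smallness hypothesis must read $\frac{\mu}{2}-cL_HH\|\bu_h\|_1\ge 0$ --- still satisfiable since $HL_H\to 0$, and note the paper is careful to keep $L_H$ explicit whenever it does use Brezis--Gallouet, cf.\ (\ref{H.restrict}) --- which is not literally the stated condition with an $H$-independent constant. The paper avoids the logarithm entirely by estimating $b(\Phi_{\e},\bu_h,\Phi_{\e})\le c\|\bu_h\|_1\|\Phi_{\e}\|\,\|\Phi_{\e}\|_1\le cH\|\bu_h\|_1\|\Phi_{\e}\|_1^2$ via $\bL^4$-type interpolation and (\ref{jhH1}); you should either do the same or display the logarithmic factor in the hypothesis.
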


\begin{proof}
With the notation $\Phi_{\e}:= \bz_h-\Phi(\by_H)\in \bJ_h^H$, we have, by deducting
(\ref{eqn.phi1}) from the second equation of (\ref{dwfjH})
\begin{align}\label{eqn.pe}
\mu a(\Phi_{\e},\bchi)+\int_{t_0}^t \beta(t-s) a(\Phi_{\e}(s),\bchi)~ds=& -(\bz_{h,t},
\bchi)-b(\bu_h,\bu_h,\bchi)+b(\by_H,\Phi(\by_H),\bchi) \nonumber \\
&+b(\by_H+\Phi(\by_H),\by_H,\bchi).
\end{align}
Put $\bchi=\Phi_{\e}$ to obtain
\begin{align}\label{ep01}
\mu \|\Phi_{\e}\|_1^2+\int_{t_0}^t \beta(t-s) a(\Phi_{\e}(s),\Phi_{\e})~ds = &
-(\bz_{h,t},\Phi_{\e})-b(\Phi_{\e},\bu_h,\Phi_{\e}) \nonumber \\
&-b(\Phi(\by_H),\Phi(\by_H),\Phi_{\e}).
\end{align}
Note that
\begin{align*}
-(\bz_{h,t},\Phi_{\e}) &\le \|\bz_{h,t}\|\|\Phi_{\e}\| \le K(t)H^3\|\Phi_{\e}\|_1 \\
-b(\Phi_{\e},\bu_h,\Phi_{\e}) &\le c\|\bu_h\|_1\|\Phi_{\e}\|\|\Phi_{\e}\|_1
\le cH\|\bu_h\|_1\|\Phi_{\e}\|_1^2 \\
-b(\Phi(\by_H),\Phi(\by_H),\Phi_{\e}) &= -b(\bz_h-\Phi_{\e},\bz_h,\Phi_{\e}) \\
&\le (\|\bz_h\|^{1/2}\|\bz_h\|_1^{1/2}+\|\Phi_{\e}\|^{1/2}\|\Phi_{\e}\|_1^{1/2})
\|\bz_h\|_1\|\Phi_{\e}\|^{1/2}\|\Phi_{\e}\|_1^{1/2} \\
&\le K(t)H^3\|\Phi_{\e}\|_1+cH\|\bz_h\|_1\|\Phi_{\e}\|_1^2.
\end{align*}
Therefore, from (\ref{ep01}), we find
\begin{align}\label{ep02}
\frac{\mu}{2} \|\Phi_{\e}\|_1^2+\int_{t_0}^t \beta(t-s) a(\Phi_{\e}(s),\Phi_{\e})~ds \le
K(t)H^6+cH\|\bu_h\|_1\|\Phi_{\e}\|_1^2
\end{align}
We have used the fact that $\|\bz_h\|_1 \le \|\bu_h\|_1+\|\by_H\|_1 \le c\|\bu_h\|_1.$ And assuming
$H$ to be small enough to satisfy
$$ \frac{\mu}{2}-cH\|\bu_h\|_1 \ge 0 $$
we establish after integrating (\ref{ep02})
$$ \int_{t_0}^t \|\Phi_{\e}\|_1^2ds \le K(t)H^6. $$
Use this result to estimate the integral term on (\ref{ep02}) to conclude
$$ \|\Phi_{\e}\|_1 \le K(t)H^3. $$
And hence
$$ \|\Phi_{\e}\| \le cH\|\Phi_{\e}\|_1 \le K(t)H^4. $$
This completes the proof.
\end{proof}

\begin{lemma}\label{err.e2}
Under the assumptions Lemma \ref{err.phi}, we have
\begin{eqnarray}
\|\e_2\|_1^2 \le K(t)H^6+c\|\e_1\|_1^2+cte^{-2\alpha t}\int_{t_0}^t e^{2\alpha s}\|\e_1(s)\|_1^2 ds \label{ee2.1} \\
\|\e_2\|^2 \le K(t)H^8+cH^2\|\e_1\|_1^2+cH^2te^{-2\alpha t}\int_{t_0}^t
e^{2\alpha s}\|\e_1(s)\|_1^2 ds. \label{ee2.2}
\end{eqnarray}
\end{lemma}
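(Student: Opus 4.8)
The plan is to isolate the genuinely nonlinear part of $\e_2$ by exploiting the solution map $\Phi$ of (\ref{relbyz}). Since $\bz^h=\Phi(\by^H)$ and $\e_2=\bz_h-\bz^h$, I split
$$ \e_2=\big(\bz_h-\Phi(\by_H)\big)+\big(\Phi(\by_H)-\Phi(\by^H)\big)=:\Phi_{\e}+\bw\in\bJ_h^H. $$
The first piece is controlled directly by Lemma \ref{err.phi}, $\|\Phi_{\e}\|+H\|\Phi_{\e}\|_1\le K(t)H^4$, so it produces only the $K(t)H^6$ (resp.\ $K(t)H^8$) terms in (\ref{ee2.1})--(\ref{ee2.2}); the whole problem reduces to estimating $\bw=\Phi(\by_H)-\Phi(\by^H)$ in terms of $\e_1=\by_H-\by^H$. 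To get an equation for $\bw$, I subtract (\ref{eqn.phi1}) written for the argument $\by^H$ from the same identity written for $\by_H$. The forcing $(\f,\bchi)$ cancels, and regrouping the convective terms by adding and subtracting intermediate products gives, for every $\bchi\in\bJ_h^H$,
\begin{align*}
\mu a(\bw,\bchi)+\int_{t_0}^t\beta(t-s)a(\bw(s),\bchi)\,ds
=&-\mu a(\e_1,\bchi)-\int_{t_0}^t\beta(t-s)a(\e_1(s),\bchi)\,ds \\
&-b(\e_1+\bw,\by_H,\bchi)-b(\by^H+\bz^h,\e_1,\bchi)-b(\e_1,\Phi(\by_H),\bchi)-b(\by^H,\bw,\bchi).
\end{align*}
The crucial structural point is that the last convective term $b(\by^H,\bw,\bchi)$ vanishes at $\bchi=\bw$ by antisymmetry of $b$; this is what makes the estimate of $\bw$ feasible.

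Because the $\bz$-equation is elliptic in space with a Volterra memory and carries no time derivative, I cannot read off a pointwise bound at once and proceed in two steps, mirroring the proof of Lemma \ref{err.phi}. In Step~A I test with $\bchi=e^{2\alpha t}\bw$, integrate from $t_0$ to $t$, and drop the resulting memory double integral, which is nonnegative by Lemma \ref{positivity} (applied to $e^{\alpha s}\nabla\bw$ with parameter $\delta-\alpha>0$). The only term quadratic in $\bw$, namely $b(\bw,\by_H,\bw)$, is bounded via Lemma \ref{nonlin} and $\|\bw\|\le cH\|\bw\|_1$ from (\ref{jhH1}) by $cH\|\bu_h\|_1\|\bw\|_1^2$ and absorbed into the coercive term under the smallness assumption $\tfrac{\mu}{2}-cH\|\bu_h\|_1\ge0$ of Lemma \ref{err.phi}. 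Every remaining convective term carries exactly one factor of $\e_1$ and at most one factor of $\bw$; applying Lemma \ref{nonlin}, (\ref{jhH1}), Young's inequality and the a priori bounds of Lemmas \ref{est.uh} and \ref{apr} together with $\|\Phi(\by_H)\|_1\le K(t)H$, each is dominated after integration by $\epsilon\int e^{2\alpha s}\|\bw\|_1^2\,ds+c\int e^{2\alpha s}\|\e_1\|_1^2\,ds$. The memory convolution involving $\e_1$ is handled by Cauchy--Schwarz over $[t_0,t]$, which is precisely where the interval length, hence the factor $t$, enters. Collecting terms yields
$$ e^{-2\alpha t}\int_{t_0}^t e^{2\alpha s}\|\bw\|_1^2\,ds\le c\,t\,e^{-2\alpha t}\int_{t_0}^t e^{2\alpha s}\|\e_1(s)\|_1^2\,ds. $$

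In Step~B I return to the identity at a fixed time $t$ with $\bchi=e^{2\alpha t}\bw$ and keep the memory self-term on the right, bounding it by $\|e^{\alpha t}\bw\|_1\int_{t_0}^t\beta(t-s)e^{\alpha(t-s)}\|e^{\alpha s}\bw(s)\|_1\,ds$ followed by Cauchy--Schwarz; feeding in the integrated bound of Step~A turns it into $ct\,e^{-2\alpha t}\int e^{2\alpha s}\|\e_1\|_1^2\,ds$ plus an absorbable $\epsilon\|\bw\|_1^2$. The instantaneous linear and convective terms supply the pointwise $c\|\e_1(t)\|_1^2$, while the memory term with $\e_1$ again contributes the weighted time-integral. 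A kickback then gives $\|\bw\|_1^2\le c\|\e_1\|_1^2+ct\,e^{-2\alpha t}\int_{t_0}^t e^{2\alpha s}\|\e_1(s)\|_1^2\,ds$, and combining with the bound on $\Phi_{\e}$ yields (\ref{ee2.1}). Finally (\ref{ee2.2}) is immediate: since $\e_2\in\bJ_h^H$, estimate (\ref{jhH1}) gives $\|\e_2\|^2\le cH^2\|\e_2\|_1^2$, which converts (\ref{ee2.1}) into (\ref{ee2.2}).

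I expect the main obstacle to be the convective terms. One must verify that the regrouping of the subtracted equation produces only products in which $\e_1$ appears to the first power, so that the right-hand side is linear in $\|\e_1\|_1^2$ rather than involving norms of $\e_1$ that are not available, and that the single genuinely quadratic-in-$\bw$ term can indeed be absorbed using (\ref{jhH1}) and the smallness of $H$; the antisymmetry $b(\by^H,\bw,\bw)=0$ is essential here. The Volterra memory is the secondary technical point: it forces the integrated-then-pointwise two-step argument and, through the Cauchy--Schwarz bound over $[t_0,t]$, is responsible for the factor $t$ and the weighted integral $e^{-2\alpha t}\int_{t_0}^t e^{2\alpha s}\|\e_1\|_1^2\,ds$ appearing in the statement.
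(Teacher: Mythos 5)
Your decomposition $\e_2=\Phi_{\e}+\bw$ with $\bw=\Phi(\by_H)-\Phi(\by^H)=-\Phi^{\e}$ is, up to a sign, exactly the paper's splitting $\e_2=\Phi_{\e}-\Phi^{\e}$, and your subtracted equation (which matches (\ref{eqn.pe1}) after using $\by_H-\e_1=\by^H$), the antisymmetry cancellation $b(\by^H,\bw,\bw)=0$, the integrated-then-pointwise two-step energy argument with Lemma \ref{positivity}, the smallness-of-$H$ absorption of the quadratic term, and the final conversion $\|\e_2\|\le cH\|\e_2\|_1$ via (\ref{jhH1}) all reproduce the paper's proof. The one detail to tighten is the term $b(\by^H+\bz^h,\e_1,\bw)$: no pointwise $\bH^1$- or $\bH^2$-bound on $\bu^h$ is among the stated a priori estimates (Lemma \ref{apr} gives only $\bL^2$-bounds and a time-integrated gradient bound), so you should, as the paper does, rewrite $\bu^h=\bu_h-\e_1-\Phi_{\e}+\Phi^{\e}$ and estimate through $\|\bu_h\|_2$, $\|\e_1\|_1$, $\|\Phi_{\e}\|_1\le K(t)H^3$ and the absorbable piece $cH\|\e_1\|_1\|\bw\|_1^2$.
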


\begin{proof}
Recall that $\e_2=\bz_h-\bz^h= (\bz_h-\Phi(\by_H))-(\bz^h-\Phi(\by_H))$. With the
notation $\Phi^{\e}=\bz^h-\Phi(\by_H)$, we have $\e_2=\Phi_{\e}-\Phi^{\e}.$ The
equation in $\Phi^{\e}$ can be obtained by deduction (\ref{eqn.phi1}) from the second
equation of (\ref{dwfJH1}).
\begin{align}\label{eqn.pe1}
\mu a(\Phi^{\e}-\e_1,\bchi)+\int_{t_0}^t \beta(t-s) a(\Phi^{\e}(s)-\e_1,\bchi)~ds=& 
-b(\bu^h,\by^H,\bchi)-b(\by^H,\bz^h,\bchi) \nonumber \\
+b(\by_H,\Phi(\by_H),\bchi)&+b(\by_H+\Phi(\by_H),\by_H,\bchi).
\end{align}
Put $\bchi=\Phi^{\e}$ to obtain
\begin{align}\label{ee201}
\mu \|\Phi^{\e}\|_1^2+\int_{t_0}^t \beta(t-s) a(\Phi^{\e}(s),\Phi^{\e})~ds= \mu a(\e_1,
\Phi^{\e})+\int_{t_0}^t \beta(t-s) a(\e_1(s),\Phi^{\e})~ds \nonumber \\
+b(\bu^h,\e_1,\Phi^{\e})+b(\e_1-\Phi^{\e},\by_H,\Phi^{\e})+b(\e_1,\Phi(\by_H),\Phi^{\e})
\end{align}
Note that
\begin{align*}
b(\bu^h,\e_1,\Phi^{\e})&=b(\bu_h-\e_1-\Phi_{\e}+\Phi^{\e},\e_1,\Phi^{\e}) \\
&\le \|\bu_h\|_2\|\e_1\|\|\Phi^{\e}\|_1+\|\e_1\|_1^2\|\Phi^{\e}\|_1+\|\Phi_{\e}\|_1
\|\e_1\|_1\|\Phi^{\e}\|_1+cH\|\e_1\|_1\|\Phi^{\e}\|_1^2 \\
& (\mbox{for the last estimate, we have used}~\|\Phi^{\e}\|_{\bL^4} \le
cH^{1/2}\|\Phi_{\e}\|_1) \\
b(\e_1-\Phi^{\e},\by_H,&\Phi^{\e})=b(\e_1-\Phi^{\e},\bu_h-\bz_h,\Phi^{\e}) \\
&\le \|\e_1\|\|\bu_h\|_2\|\Phi^{\e}\|_1+\|\e_1\|_1\|\bz_h\|_1\|\Phi^{\e}\|_1
+cH\|\by_H\|_1\|\Phi_{\e}\|_1^2 \\
b(\e_1,\Phi(\by_H),\Phi^{\e}) &\le \|\e_1\|_1(\|\Phi_{\e}\|_1+\|\bz_h\|_1)\|\Phi^{\e}\|_1.
\end{align*}
Therefore, we find from (\ref{ee201})
\begin{align}\label{ee202}
(\mu-cH\|\by_H\|_1) \|\Phi^{\e}\|_1^2+2\int_{t_0}^t \beta(t-s) a(\Phi^{\e}(s), &
\Phi^{\e})~ds \le c\|\e_1\|^2+c(1+H^2)\|\e_1\|_1^2 \nonumber \\
& +ce^{-2\alpha t}\int_{t_0}^t e^{2\alpha s}\|\e_1(s)\|_1^2 ds.
\end{align}
Assuming $H$ small enough to satisfy
$$ \mu-cH\|\by_H\|_1 \ge 0 $$
we have, after integration
\begin{equation}\label{ee203}
e^{-2\alpha t}\int_{t_0}^t e^{2\alpha s} \|\Phi^{\e}\|_1^2 \le K(t)H^6+ce^{-2\alpha t}\int_{t_0}^t e^{2\alpha s} \|\e_1(s)\|_1^2 ds.
\end{equation}
Use (\ref{ee203}) to estimate the integral term in (\ref{ee202}) to obtain
$$ \|\Phi^{\e}\|_1^2 \le K(t)H^6+c\|\e_1\|_1^2+cte^{-2\alpha t}\int_{t_0}^t e^{2\alpha s}\|\e_1(s)\|_1^2 ds. $$
And so
$$ \|\Phi^{\e}\|^2 \le K(t)H^8+cH^2\|\e_1\|_1^2+cH^2te^{-2\alpha t}\int_{t_0}^t
e^{2\alpha s}\|\e_1(s)\|_1^2 ds. $$
Using triangle inequality, we complete the proof.
\end{proof}
\begin{remark}\label{eta.subopt}
Since the linearized error $\bxi$ (that is, $\bxi_1,\bxi_2$) is optimal in nature,
so we obtain from (\ref{ee2.1})-(\ref{ee2.2})
\begin{align*}
\|\bta_2\|_1^2 \le K(t)H^6+c\|\bta_1\|_1^2+cte^{-2\alpha t}\int_{t_0}^t e^{2\alpha s}\|\bta_1(s)\|_1^2 ds \\
\|\bta_2\|^2 \le K(t)H^8+cH^2\|\bta_1\|_1^2+cH^2te^{-2\alpha t}\int_{t_0}^t
e^{2\alpha s}\|\bta_1(s)\|_1^2 ds.
\end{align*}
Use Lemma \ref{l2.err1} to find that
\begin{eqnarray}
\|\bta_2\|_1^2 \le K(t)H^6+c\|\bta_1\|_1^2 \label{eta.H1} \\
\|\bta_2\|^2 \le K(t)H^8+cH^2\|\bta_1\|_1^2. \label{eta.L2}
\end{eqnarray}
\end{remark}
\noindent Following \cite{MX95}, we introduce the operator $R_h^H:\bJ_h\to \bJ_h^H$
satisfying
\begin{equation}\label{prop.R}
a(\bv-R_h^H\bv,\bchi)=0,~\forall\chi\in\bJ_h^H.
\end{equation}
With the notations
$$ ||\bv||_R=\|(I-R_h^H)\bv\|_1,~~~ (\bv,\bw)_R=a((I-R_h^H)\bv,(I-R_h^H)\bw), $$
we have, from Lemma 4.1 of \cite{MX95},
\begin{equation}\label{norm.R}
c_1\|\bv\|_1 \le \|\bv\|_R \le c_2\|\bv\|_1,
\end{equation}
where $c_1,c_2$ are positive constants independent of $h,H$.
And similar to Lemmas 4.5 and 4.6 of \cite{MX95}, we find for $\bphi\in\bJ_H$
\begin{eqnarray}
(\by_t^H,\bphi)+\mu (\by^H,\bphi)_R &=& (\f,(I-R_h^H)\bphi)-\int_{t_0}^t \beta(t-s)
a(\bu^h,(I-R_h^H)\bphi)~ds \nonumber \\
&&-b(\bu^h,\bu^h,(I-R_h^H)\bphi)-b(\bz^h,\bz^h,R_h^H\bphi) \label{eqn.R} \\
(\by_{H,t},\bphi)+\mu (\by_H,\bphi)_R &=& (\f,(I-R_h^H)\bphi)-\int_{t_0}^t \beta(t-s)
a(\bu_h,(I-R_h^H)\bphi)~ds \nonumber \\
&&-b(\bu_h,\bu_h,(I-R_h^H)\bphi)+(\bu_{h,t},R_h^H\bphi).
\end{eqnarray}
Now, for $\bphi\in\bJ_H$, we write the equation in $\e_1=\by_H-\by^H$ as
\begin{align}\label{eqn.e1.R}
(\e_{1,t},\bphi)+&\mu (\e_1,\bphi)_R= -\int_{t_0}^t \beta(t-s) a(\e_1,(I-R_h^H)\bphi)~ds
+(\bu_{h,t},R_h^H\bphi) \nonumber \\
+& b(\bz^h,\bz^h,R_h^H\bphi)-b(\e_1+\e_2,\bu_h,(I-R_h^H)\bphi)-b(\bu_h,\e_1+\e_2, (I-R_h^H)\bphi) \nonumber \\
+& b(\e_1+\e_2,\e_1+\e_2,(I-R_h^H)\bphi).
\end{align}
\begin{lemma}\label{err.e1.h1}
Under the assumptions Lemma \ref{err.phi}, we have
$$ \|\e_1\|_1^2+\int_{t_0}^t \|\e_{1,t}\|^2ds \le K(t)H^6. $$
\end{lemma}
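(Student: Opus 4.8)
The plan is to test the error equation (\ref{eqn.e1.R}) with $\bphi=\e_{1,t}\in\bJ_H$, so that the left-hand side becomes $\|\e_{1,t}\|^2+\frac{\mu}{2}\frac{d}{dt}\|\e_1\|_R^2$; since $\|\cdot\|_R$ is equivalent to $\|\cdot\|_1$ by (\ref{norm.R}) and $\e_1(t_0)=0$, integrating in time over $(t_0,t)$ will produce exactly the two quantities $\|\e_1\|_1^2$ and $\int_{t_0}^t\|\e_{1,t}\|^2$ of the statement, provided every term on the right is shown to be $O(H^6)$ after absorbing multiples of these two quantities. Before estimating, I would exploit two structural facts. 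First, by the defining relation (\ref{prop.R}) of $R_h^H$ one has $a(\e_2,(I-R_h^H)\bphi)=0$ for $\e_2\in\bJ_h^H$ (this is why only $\e_1$ survives in the memory term), and likewise $a(\e_1(s),(I-R_h^H)\e_{1,t})=(\e_1(s),\e_{1,t})_R$. Second, by the $\bL^2$-orthogonality of $\bJ_H$ and $\bJ_h^H$ and the fact that $R_h^H\e_{1,t}\in\bJ_h^H$, the good term reduces to $(\bu_{h,t},R_h^H\e_{1,t})=(\bz_{ht},R_h^H\e_{1,t})$, bringing in the small quantity $\bz_{ht}$.

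The key observation is that $\e_{1,t}$ enters the right-hand side only through $\bL^2$ pairings, never differentiated in space, and the factors multiplying it ($\bz_{ht}$, the memory $\int\beta\,\e_1$, and $b(\bz^h,\bz^h,\cdot)$) are all of size $H^2$ or smaller. A direct Cauchy--Schwarz estimate against $\|\e_{1,t}\|$ would only give $H^4$, so the decisive step is to integrate each such term by parts in time, transferring the time derivative off $\e_1$. For the memory term, writing $G(t)=\int_{t_0}^t\beta(t-s)\e_1(s)\,ds$ and using $G(t_0)=0$, $\e_1(t_0)=0$, this yields $-(G(t),\e_1(t))_R+\int_{t_0}^t(G_s,\e_1)_R\,ds$, both controlled by $\|\e_1\|_1^2$ and $\int_{t_0}^t\|\e_1\|_1^2$. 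For $(\bz_{ht},R_h^H\e_{1,t})$, integration by parts replaces $\bz_{ht}$ by $\bz_{htt}$ and $R_h^H\e_{1,t}$ by $R_h^H\e_1$; since $R_h^H\e_1\in\bJ_h^H$ gives $\|R_h^H\e_1\|\le cH\|\e_1\|_1$ by (\ref{jhH1}) while $\|\bz_{ht}\|,\|\bz_{htt}\|\le K(t)H^2$ by Lemma \ref{bzh.est}, the extra factor $H$ is exactly what upgrades $H^4$ to $H^6$. The same device applied to $b(\bz^h,\bz^h,R_h^H\e_{1,t})$, using $\|\bz^h\|\le K(t)H^2$ and $\|\bz^h\|_1\le K(t)H$ (from Lemmas \ref{bzh.est}, \ref{l2.err1}) together with an analogous bound on $\bz^h_t$ obtained as in the further estimates after Lemma \ref{apr}, again produces $O(H^6)$ up to a term $\epsilon\|\e_1\|_1^2$. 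This sharp treatment of the $\e_{1,t}$-terms is the main obstacle of the proof.

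Finally I would treat the remaining nonlinear contributions $b(\e_1+\e_2,\bu_h,(I-R_h^H)\e_{1,t})$, $b(\bu_h,\e_1+\e_2,(I-R_h^H)\e_{1,t})$ and $b(\e_1+\e_2,\e_1+\e_2,(I-R_h^H)\e_{1,t})$. Here I would bound $(I-R_h^H)\e_{1,t}$ in $\bL^2$ by $c\|\e_{1,t}\|$ (inverse inequality on $\bJ_H$ together with (\ref{jhH1})), estimate the trilinear forms via Lemma \ref{nonlin} and the a priori bound $\|\bu_h\|_2\le K$ of Lemma \ref{est.uh}, and eliminate every $\e_2$ by invoking (\ref{ee2.1})--(\ref{ee2.2}) of Lemma \ref{err.e2}, which control $\|\e_2\|_1^2$ and $\|\e_2\|^2$ by $K(t)H^6$ plus multiples of $\|\e_1\|_1^2$ and $\int_{t_0}^t e^{2\alpha s}\|\e_1\|_1^2$. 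Choosing the Young parameters small enough to absorb all copies of $\|\e_{1,t}\|^2$ and $\|\e_1\|_1^2$ into the left-hand side, I arrive at a differential inequality of the form $\frac{d}{dt}\|\e_1\|_R^2+\|\e_{1,t}\|^2\le K(t)H^6+K\|\e_1\|_1^2+K\int_{t_0}^t\|\e_1\|_1^2\,ds$, and Gronwall's lemma then yields the claimed bound $\|\e_1\|_1^2+\int_{t_0}^t\|\e_{1,t}\|^2\,ds\le K(t)H^6$.
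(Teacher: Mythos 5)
Your overall architecture coincides with the paper's proof: test (\ref{eqn.e1.R}) with $\bphi=\e_{1,t}$, exploit $\|\cdot\|_R\sim\|\cdot\|_1$ and $\e_1(t_0)=0$, integrate by parts in time every term that pairs with $\e_{1,t}$ so that the small factor $\|R_h^H\e_1\|\le cH\|\e_1\|_1$ together with $\|\bz_{ht}\|,\|\bz_{htt}\|\le K(t)H^2$ upgrades $H^4$ to $H^6$, bound the trilinear remainders against $c\|\e_{1,t}\|$ using the inverse inequality on $\bJ_H$, eliminate $\e_2$ through Lemma \ref{err.e2} (the paper uses its specialization (\ref{eta.H1}), i.e.\ $\|\e_2\|_1^2\le K(t)H^6+\|\e_1\|_1^2$), and finish with Gronwall. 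Your identities $a(\e_2,(I-R_h^H)\bphi)=0$ and $(\bu_{h,t},R_h^H\e_{1,t})=(\bz_{ht},R_h^H\e_{1,t})$ are correct, and the latter is in fact a slightly cleaner route than the paper's decomposition of $\bu_{h,tt}$ into $(I-P_H)\bu_{tt}$ and $(\bu_h-\bu)_{tt}$.

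There is, however, one genuine gap: your treatment of $b(\bz^h,\bz^h,R_h^H\e_{1,t})$. You propose to integrate this term by parts directly in time, which produces $b(\bz^h_t,\bz^h,R_h^H\e_1)$ and $b(\bz^h,\bz^h_t,R_h^H\e_1)$, and you invoke ``an analogous bound on $\bz^h_t$ obtained as in the further estimates after Lemma \ref{apr}.'' No such estimate exists in the paper, and it is not a routine consequence of that remark: what you need is an approximation-quality bound of the type $\|\bz^h_t\|+H\|\bz^h_t\|_1\le K(t)H^2$, i.e.\ essentially a bound on $\|\e_{2,t}\|$, since $\bz^h_t=\bz_{ht}-\e_{2,t}$. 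In NLG II the equation for $\bz^h$ is steady (no evolution equation for $\bz^h$ is solved), so bounding $\bz^h_t$ at this order would require time-differentiating the nonlinear error equations --- precisely the machinery the lemma is trying to avoid. The paper circumvents this by first substituting $\bz^h=\bz_h-\e_2$ and integrating by parts only the pure term $b(\bz_h,\bz_h,R_h^H\e_{1,t})$, for which Lemma \ref{bzh.est} supplies $\bz_{h,t}$ bounds; the cross terms $b(\e_2,\bz_h,R_h^H\e_{1,t})$ and $b(\bz_h-\e_2,-\e_2,R_h^H\e_{1,t})$ are then estimated directly against $\epsilon\|\e_{1,t}\|^2$, using $\|\bz_h\|_1\le K(t)H$ and the $\e_2$ bounds of Lemma \ref{err.e2}, which is enough because these terms carry the extra factor $\|\bz_h\|_1$ or $\|\e_2\|_1$. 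Replacing your direct integration by parts with this substitution repairs the proof; everything else in your proposal is sound and matches the paper's argument.
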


\begin{proof}
Put $\bphi=\e_{1,t}$ in (\ref{eqn.e1.R}) and observe that
\begin{align*}
(\bu_{h,t},R_h^H\e_{1,t}) &=\frac{d}{dt}(\bu_{h,t},R_h^H\e_1)-(\bu_{h,tt},R_h^H\e_1) \\
&=\frac{d}{dt}(\bu_{h,t},R_h^H\e_1)-((I-P_H)\bu_{tt},R_h^H\e_1)-((\bu_h-\bu)_{tt},
R_h^H\e_1) \\
& \le \frac{d}{dt}(\bu_{h,t},R_h^H\e_1)+K(t)H^3\|\e_1\|_1, \\
-\int_{t_0}^t \beta(t-s)& a(\e_1(s),(I-R_h^H)\e_{1,t})~ds = \frac{d}{dt} \Big\{
-\int_{t_0}^t \beta(t-s) a(\e_1(s),(I-R_h^H)\e_1)~ds\Big\} \\
&+\beta(0) a(\e_1,(I-R_h^H)\e_1)-\delta\int_{t_0}^t \beta(t-s) a(\e_1(s),
(I-R_h^H)\e_1)~ds, \\
-b(\e_1+\e_2,\bu_h,& (I-R_h^H)\e_{1,t})-b(\bu_h,\e_1+\e_2,(I-R_h^H)\e_{1,t}) \\
& \le c(\|\e_1\|_1+\|\e_2\|_1)\|\bu_h\|_2\|\e_{1,t}\|, \\
b(\e_1+\e_2,\e_1+&\e_2,(I-R_h^H)\e_{1,t})=b(\e_1+\e_2,\bu_h-\bu^h,(I-R_h^H)\e_{1,t}) \\
& \le c(\|\e_1\|_1+\|\e_2\|_1)(\|\bu_h\|_2+\|\bu^h\|_2)\|\e_{1,t}\| \\
b(\bz^h,\bz^h,R_h^H\e_{1,t}) &=b(\bz_h-\e_2,\bz_h-\e_2,R_h^H\e_{1,t})
= \frac{d}{dt} b(\bz_h,\bz_h,R_h^H\e_1)-b(\bz_{h,t},\bz_h,R_h^H\e_1) \\
& -b(\bz_h,\bz_{h,t},R_h^H\e_1)+b(\bz_h-\e_2,-\e_2,R_h^H\e_{1,t})+b(\e_2,\bz_h,
R_h^H\e_{1,t}) \\
-b(\bz_{h,t},\bz_h,R_h^H\e_1)&-b(\bz_h,\bz_{h,t},R_h^H\e_1) \le cH\|\bz_h\|_1 \|\bz_{h,t}\|_1\|\e_1\|_1 \\
b(\e_2,\bz_h,R_h^H\e_{1,t})& \le c\|\e_2\|^{1/2}\|\e_2\|_1^{1/2}(\|\bz_h\|_1
\|R_h^H\e_{1,t}\|^{1/2}\|R_h^H\e_{1,t}\|_1^{1/2}+\|\bz_h\|^{1/2}\|\bz_h\|_1^{1/2}
\|R_h^H\e_{1,t}\|_1) \\
&\le cH\|\e_2\|_1\|\bz_h\|_1\|\e_{1,t}\|_1 \le c\|\e_2\|_1\|\bz_h\|_1\|\e_{1,t}\|.
\end{align*}
Here, we have used that $\|(I-R_h^H)\e_{1,t}\| \le \|\e_{1,t}\|+cH\|\e_{1,t}\|_1
\le c\|\e_{1,t}\|$. And now we have
\begin{align}\label{e1h101}
\|\e_{1,t}\|^2+\frac{\mu}{2}\frac{d}{dt}\|\e_1\|_R^2 \le K(t)H^3\|\e_1\|_1+
+c\|\e_1\|_1(\|\e_1\|_1+\int_{t_0}^t \beta(t-s)\|\e_1(s)\|_1) \nonumber \\
\frac{d}{dt} \Big\{(\bu_{h,t},R_h^H\e_1)-\int_{t_0}^t \beta(t-s) a(\e_1(s),
(I-R_h^H)\e_1)~ds+b(\bz_h,\bz_h,R_h^H\e_1)\Big\} \nonumber \\
+c(\|\e_1\|_1+\|\e_2\|_1) \|\e_{1,t}\|+cH\|\bz_h\|_1 \|\bz_{h,t}\|_1\|\e_1\|_1
+c\|\e_2\|_1\|\bz_h\|_1\|\e_{1,t}\|.
\end{align}
Integrate (\ref{e1h101}),use (\ref{norm.R}) and the fact that $\e_1(t_0)=0$ to find
\begin{align*}
\|\e_1\|_1^2+\int_{t_0}^t \|\e_{1,t}\|^2ds \le K(t)H^6+c\int_{t_0}^t (\|\e_1\|_1^2
+\|\e_2\|^2)~ds+(\bu_{h,t},R_h^H\e_1) \\
-\int_{t_0}^t \beta(t-s) a(\e_1(s),(I-R_h^H)\e_1)~ds+b(\bz_h,\bz_h,R_h^H\e_1).
\end{align*}
As earlier, we estimate the last three terms to obtain
\begin{align}\label{e1h102}
\|\e_1\|_1^2+\int_{t_0}^t \|\e_{1,t}\|^2ds \le K(t)H^6+c\int_{t_0}^t (\|\e_1\|_1^2
+\|\e_2\|^2)~ds.
\end{align}
We note from (\ref{eta.H1}) and triangle inequality that
$$ \|\e_2\|_1^2 \le K(t)H^6+\|\bta_1\|_1^2 \le K(t)H^6+\|\e_1\|_1^2. $$
Therefore
\begin{align*} 
\|\e_1\|_1^2+\int_{t_0}^t \|\e_{1,t}\|^2ds \le K(t)H^6+c\int_{t_0}^t \|\e_1\|_1^2ds.
\end{align*}
Use Gronwall's lemma to complete the rest of the proof
\end{proof}
\begin{remark}
The Lemma \ref{err.e1.h1} tells us that
$$ \|\bta_1\|_1 \le K(t)H^3, $$
and as a result, from Remark \ref{eta.subopt}, we have
$$ \|\bta_2\|+H\|\bta_2\|_1 \le K(t)H^4. $$
Another application of triangle inequality results in
$$ \|\e_2\|+H\|\e_2\|_1 \le K(t)H^4. $$
\end{remark}
\noindent
For the final estimate, we write down the error equations in terms of $\e_i,~i=1,2$.
\begin{align}\label{eqn.e12}
\left\{\begin{array}{rl}
(\e_{1,t},\bphi)+\mu a (\e,\bphi)+ \int_{t_0}^t \beta(t-s) a(\e(s), \bphi)~ds
&= -b(\bu_h,\bu_h,\bphi)+b(\bu^h,\bu^h,\bphi) \\
\mu a (\e,\bchi)+ \int_{t_0}^t \beta(t-s) a(\e(s), \bchi)~ds &= -(\bz_{ht},\bchi)
-b(\bu_h,\bu_h,\bchi) \\
& ~ +b(\bu^h,\bu^h,\bchi)-b(\bz^h,\bz^h,\bchi),
\end{array}\right.
\end{align}

\begin{lemma}\label{err.e1}
Under the assumptions Lemma \ref{err.e1.h1}, we have
$$ \|(\bu_h-\bu^h)(t)\| \le K(t)H^4, ~~t>t_0. $$
\end{lemma}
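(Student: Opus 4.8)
The plan is to establish the final $L^\infty(\bL^2)$ estimate $\|\e\| = \|(\bu_h-\bu^h)\| \le K(t)H^4$ for NLG II by recycling the machinery already developed for NLG I, but now feeding in the sharpened bounds on $\e_1$ and $\e_2$ obtained in Lemma~\ref{err.e1.h1} and the subsequent Remark. Since $\e = \bxi - \bta$ and the linearized part $\bxi$ is already known to be optimal (Lemma~\ref{l2l2.bxi} gives $\|\bxi\| \le K(t)H^4$ via the same backward-problem duality argument used there), the whole task reduces to proving $\|\bta\| \le K(t)H^4$. The error equations for $\bta = \bta_1 + \bta_2$ are exactly the system (\ref{bta12a}), so I can follow the structure of the proof of Lemma~\ref{l2.err1} verbatim up to the point where the nonlinear terms are estimated.

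First I would choose $\bphi = e^{2\alpha t}\bta_1$, $\bchi = e^{2\alpha t}\bta_2$ in (\ref{bta12a}) and add, producing an energy identity of the form (\ref{l2.err101}) whose right-hand side is $e^{2\alpha t}\{\Lambda_h(\bta_1,\bta_2) + b(\bz^h,\bz^h,\bta_2)\}$. The difference from Lemma~\ref{l2.err1} is entirely quantitative: there the bound $\|\bta_1\|_1 \le K(t)H^2$ was the bottleneck, yielding only $H^3$, whereas now I may invoke Lemma~\ref{err.e1.h1}, i.e. $\|\e_1\|_1 \le K(t)H^3$ and $\int_{t_0}^t \|\e_{1,t}\|^2\,ds \le K(t)H^6$, together with $\|\bta_1\|_1 \le K(t)H^3$ and the upgraded $\|\bta_2\| + H\|\bta_2\|_1 \le K(t)H^4$ from the Remark following that Lemma. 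The extra nonlinear term $b(\bz^h,\bz^h,\bta_2)$ is handled by writing $\bz^h = \bz_h + \bta_2 - \bxi_2$ and splitting exactly as in Lemma~\ref{l2.err1}; crucially, because $\bz_h$, $\bxi_2$ and $\bta_2$ are now all controlled in $\bL^2$ by $H^4$ (and in $\bH^1$ by $H^3$), each trilinear piece can be forced to contribute at the $H^8$ level in the energy estimate rather than $H^6$.

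The main technical step is then to rerun the $\bta_1$ energy estimate and a separate $\bta_2$-only estimate (using just the second equation of (\ref{bta12a}), as at the end of Lemma~\ref{l2.err1}) while absorbing the $\|\bta_2\|_1^2$ terms into the left-hand side via the smallness conditions on $H$ already assumed (namely $\mu\rho - 2H\|\bbu\|_2 \ge 0$ and $\mu - H(\|\bbu\|_2 + \|\by^H\|_2) \ge 0$), closing with Gronwall's lemma to obtain
\begin{align*}
\|\bta_1\|^2 + e^{-2\alpha t}\int_{t_0}^t e^{2\alpha s}\big(\|\hta_1\|_1^2 + \|\hta_2\|_1^2\big)\,ds \le K(t)H^8,
\end{align*}
hence $\|\bta_1\| \le K(t)H^4$, and then a kickback estimate on $\|\hta_2\|_1$ followed by (\ref{jhH1}) giving $\|\bta_2\| \le cH\|\bta_2\|_1 \le K(t)H^4$. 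Combining with the optimal bound on $\bxi$ through the triangle inequality yields $\|\e\| = \|\bxi - \bta\| \le K(t)H^4$.

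\textbf{The hard part} I expect is the clean bookkeeping of the term $b(\bz^h,\bz^h,\bta_2)$: one must be sure that every appearance of $\bz_h$ or $\bxi_2$ is paired so that, after Cauchy--Schwarz and the use of $\|\bchi\| \le cH\|\bchi\|_1$ on the $\bJ_h^H$-factors, the residual that is \emph{not} absorbable by kickback lands at order $H^8$. This is the precise spot where, in NLG~II, Remark~\ref{rmk} identifies the dropped term as the convergence-limiting factor; the whole point of the improved-estimate subsection is that Lemma~\ref{err.e1.h1}'s sharper $H^3$ control of $\e_1$ in $\bH^1$ is exactly what is needed to push the contribution of this term from $H^6$ down to $H^8$, thereby recovering optimality. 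I would therefore keep careful track of the interplay between the $\bL^2$ and $\bH^1$ norms of $\bz_h$, $\bxi_2$, and $\bta_2$, since it is their $H^4$/$H^3$ mismatch that drives the final order.
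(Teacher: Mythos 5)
There is a genuine gap, and it sits exactly where you flagged ``the hard part.'' Your plan is to re-run the weighted energy estimate of Lemma \ref{l2.err1} on the $\bta$-system (\ref{bta12a}) and claim that, with the sharpened bounds from Lemma \ref{err.e1.h1}, ``each trilinear piece can be forced to contribute at the $H^8$ level.'' This is false for the intrinsic term $b(\bz_h,\bz_h,\bta_2)$ arising from the splitting $b(\bz^h,\bz^h,\bta_2)=b(\bz_h+\bta_2-\bxi_2,\bz_h-\bxi_2,\bta_2)$. That term involves no factor of $\e_1$ or $\bta_1$ at all: it is built from the Galerkin fine-scale part $\bz_h$, for which only $\|\bz_h\|\le K(t)H^2$, $\|\bz_h\|_1\le K(t)H$ hold (Lemma \ref{bzh.est}), and from $\bta_2$, for which even the best available bounds give $\|\bta_2\|\le K(t)H^4$, $\|\bta_2\|_1\le K(t)H^3$. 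The sharpest estimates then yield
$$ |b(\bz_h,\bz_h,\bta_2)| \le c\,\|\bz_h\|^{1/2}\|\bz_h\|_1^{1/2}\,\|\bz_h\|_1\,\|\bta_2\|^{1/2}\|\bta_2\|_1^{1/2} \le K(t)H^{3/2}\cdot H\cdot H^{7/2}=K(t)H^6, $$
and no rearrangement does better: the term cannot be absorbed by kickback beyond $\epsilon\|\bta_2\|_1^2+K(t)H^6$, and it carries no $\|\bta_1\|^2$ factor for Gronwall. Since the combined energy identity controls only $\frac{d}{dt}\|\hta_1\|^2$ on the left, an $O(H^6)$ source on the right caps you at $\|\bta_1\|^2\le K(t)H^6$, i.e. $\|\bta_1\|\le K(t)H^3$ — you merely reproduce Lemma \ref{l2.err1}, not improve it. This is precisely the obstruction Remark \ref{rmk} identifies; improved a priori information about $\e_1$ does not remove it, because the offending term does not see $\e_1$.

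The paper circumvents this by never re-testing the fine-space equation with $\bta_2$ in the final step. It works with the error equations (\ref{eqn.e12}) in $\e_1,\e_2$ directly, and crucially uses only the \emph{first} (coarse-space) equation, in which the dropped-term contribution $b(\bz^h,\bz^h,\cdot)$ does not appear. Testing it with the negative-norm test function $\bphi=\td_H^{-1}\e_{1,t}$ produces $\|\e_{1,t}\|_{-1}^2+\frac{\mu}{2}\frac{d}{dt}\|\e_1\|^2$ on the left, while the nonlinear terms are bounded by $c(\|\e_1\|+\|\e_2\|)(\|\bu_h\|_2+\|\bu^h\|_2)\|\e_{1,t}\|_{-1}$, so that after kickback only $\bL^2$-norms of the errors enter. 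Feeding in $\|\e_2\|\le K(t)H^4$ (the Remark following Lemma \ref{err.e1.h1}, which you correctly invoke but then under-use), integrating, handling the memory term by the product-rule identity for $\frac{d}{dt}\int_{t_0}^t\beta(t-s)(\e_1(s),\e_1)\,ds$, and applying Gronwall gives $\|\e_1\|\le K(t)H^4$; the triangle inequality with the $\e_2$ bound concludes. In short: your decomposition and your inputs are the right ones, but the closing mechanism must be a duality-type estimate on the coarse equation (where the convergence-limiting term is absent), not a repetition of the $e^{2\alpha t}$ energy argument on (\ref{bta12a}).
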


\begin{proof}
With the notation $\td_H=P_H(-\Delta_h)$, we choose $\phi=\td_H^{-1}\e_{1,t}$ in the
first equation of (\ref{eqn.e12}) to find
\begin{align}\label{ee101}
\|\e_{1,t}\|_{-1}^2+\frac{\mu}{2}\frac{d}{dt}\|\e_1\|^2+\int_{t_0}^t \beta(t-s)
(\e_1(s),\e_{1,t})~ds=& b(\e_1+\e_2,\bu_h,\td_H^{-1}\e_{1,t}) \nonumber \\
&+b(\bu^h,\e_1+\e_2,\td_H^{-1}\e_{1,t}).
\end{align}
Observe that
\begin{align*}
\frac{d}{dt}\Big\{\int_{t_0}^t \beta(t-s)(\e_1(s),\e_1)~ds \Big\}
=& \int_{t_0}^t \beta(t-s)(\e_1(s),\e_{1,t})~ds+\beta(0)\|\e_1\|^2 \\
&-\delta\int_{t_0}^t \beta(t-s)(\e_1(s),\e_1)~ds.
\end{align*}
Hence, we obtain from (\ref{ee101})
\begin{align}\label{ee102}
\|\e_{1,t}\|_{-1}^2+&\frac{\mu}{2}\frac{d}{dt}\|\e_1\|^2+
\frac{d}{dt}\Big\{\int_{t_0}^t \beta(t-s)(\e_1(s),\e_1)~ds \Big\}
+\delta\int_{t_0}^t \beta(t-s)(\e_1(s),\e_1)~ds \nonumber \\
&= \beta(0)\|\e_1\|^2+b(\e_1+\e_2,\bu_h,\td_H^{-1}\e_{1,t})
+b(\bu^h,\e_1+\e_2,\td_H^{-1}\e_{1,t}).
\end{align}
As earlier, we have
\begin{align*}
& b(\e_1+\e_2,\bu_h,\td_H^{-1}\e_{1,t})+b(\bu^h,\e_1+\e_2,\td_H^{-1}\e_{1,t}) \\
\le & c(\|\e_1\|+\|\e_2\|)(\|\bu_h\|_2+\|\bu^h\|_2)\|\e_{1,t}\|_{-1}.
\end{align*}
Integrate (\ref{ee102}) and use the above estimate to find
\begin{align*}
\|\e_1\|^2+\int_{t_0}^t \|\e_{1,t}\|_{-1}^2 \le c\int_{t_0}^t (\|\e_1\|^2+\|\e_2\|^2)~ds
\le K(t)H^8+c\int_{t_0}^t \|\e_1\|^2ds.
\end{align*}
Apply Gronwall's lemma to conclude
$$ \|\e_1\|^2+\int_{t_0}^t \|\e_{1,t}\|_{-1}^2 \le K(t)H^8. $$
Combining with the Remark 5.3, we have the desired result.
\end{proof}

\begin{remark}
It is clear from our above analysis is that the linearized error between NLG approximation and Galerkin approximation is of order $H^4$ in $L^2$-norm. However, non-linearized part of the error may not always be of same order. For example, if the equation in $\bz^h$ contains only $b(\by^H,\by^H,\bchi)$, then the non-linearized part of the error (i.e. the equation in $\bta$) will contain additional terms like $b(\by^H,\bz^h,\bchi)$ and $b(\bz^h,\by^H,\bchi)$
apart from the non-linear terms of the second equation of (\ref{bta12a}). And with one of these terms, we believe, we can only manage $H^3$ order of convergence in $L^2$-norm.
\end{remark}

\noindent
{\bf Acknowledge}: The author would like to thank CAPES for financial grant.

\end{document}